\newcommand*{\@old@slash}{}\let\@old@slash\slash
\def\slash{\relax\ifmmode\delimiter"502F30E\mathopen{}\else\@old@slash\fi}
\titleformat{\section}{\normalsize\bfseries}{\thesection}{1em}{}
\titleformat{\subsection}{\normalsize\bfseries}{\thesubsection}{1em}{}
\numberwithin{equation}{subsection}
\theoremstyle{plain}
\newtheorem{PropSub}[subsection]{Proposition}
\newtheorem{LemSub}[subsection]{Lemma}
\newtheorem{CorSub}[subsection]{Corollary}
\newtheorem{ThmSub}[subsection]{Theorem}
\theoremstyle{definition}
\newtheorem{DefSub}[subsection]{Definition}
\newtheorem{ExaSub}[subsection]{Example}
\newtheorem{RemSub}[subsection]{Remark}
\newtheorem{ParSub}[subsection]{}
\newcommand*{\emptybox}{\leavevmode\hbox{}}
\DeclareMathAlphabet{\mathpzc}{OT1}{pzc}{m}{it}
\DeclareMathAlphabet{\mathcalligra}{T1}{calligra}{m}{n}
\newcommand{\bref}[1]{\textnormal{\ref{#1}}}
\newcommand{\pbref}[1]{\textnormal{(\ref{#1})}}
\newcommand{\boldref}[1]{\textbf{\ref{#1}}}
\newcommand{\tinyplus}{\scalebox{0.6}{$\uparrow$}}
\newcommand{\tinyminus}{\scalebox{0.6}{$\downarrow$}}
\newcommand{\cO}{\ensuremath{\mathcal{O}}}
\newcommand{\A}{\ensuremath{\mathscr{A}}}
\newcommand{\B}{\ensuremath{\mathscr{B}}}
\newcommand{\C}{\ensuremath{\mathscr{C}}}
\newcommand{\F}{\ensuremath{\mathscr{F}}}
\newcommand{\G}{\ensuremath{\mathscr{G}}}
\newcommand{\sH}{\ensuremath{\mathscr{H}}}
\newcommand{\J}{\ensuremath{\mathscr{J}}}
\newcommand{\K}{\ensuremath{\mathscr{K}}}
\newcommand{\sL}{\ensuremath{\mathscr{L}}}
\newcommand{\M}{\ensuremath{\mathscr{M}}}
\newcommand{\N}{\ensuremath{\mathscr{N}}}
\newcommand{\T}{\ensuremath{\mathscr{T}}}
\newcommand{\U}{\ensuremath{\mathscr{U}}}
\newcommand{\V}{\ensuremath{\mathscr{V}}}
\newcommand{\W}{\ensuremath{\mathscr{W}}}
\newcommand{\shortunderline}[1]{\underline{#1\mkern-5mu}\mkern5mu}
\newcommand{\aeV}{\ensuremath{\shortunderline{\mathscr{V}}}}
\newcommand{\uM}{\ensuremath{\shortunderline{\mathscr{M}}}}
\newcommand{\uN}{\ensuremath{\underline{\mathscr{N}\mkern-9mu}\mkern8mu}}
\newcommand{\uU}{\ensuremath{\mkern3mu\underline{\mkern-3mu\mathscr{U}\mkern-5mu}\mkern4mu}}
\newcommand{\uV}{\ensuremath{\mkern2mu\underline{\mkern-2mu\mathscr{V}\mkern-6mu}\mkern5mu}}
\newcommand{\uW}{\ensuremath{\underline{\mathscr{W}\mkern-7mu}\mkern6mu}}
\newcommand{\CAT}{\ensuremath{\operatorname{\textnormal{\text{CAT}}}}}
\newcommand{\eCAT}[1]{\ensuremath{#1\textnormal{-\text{CAT}}}}
\newcommand{\VCAT}{\ensuremath{\V\textnormal{-\text{CAT}}}}
\newcommand{\WCAT}{\ensuremath{\W\textnormal{-\text{CAT}}}}
\newcommand{\eMCAT}[1]{\ensuremath{#1\textnormal{-\text{MCAT}}}}
\newcommand{\eSMCAT}[1]{\ensuremath{#1\textnormal{-\text{SMCAT}}}}
\newcommand{\eSMCCAT}[1]{\ensuremath{#1\textnormal{-\text{SMCCAT}}}}
\newcommand{\VMCAT}{\ensuremath{\eMCAT{\V}}}
\newcommand{\VSMCAT}{\ensuremath{\eSMCAT{\V}}}
\newcommand{\VSMCCAT}{\ensuremath{\eSMCCAT{\V}}}
\newcommand{\WSMCAT}{\ensuremath{\eSMCAT{\W}}}
\newcommand{\WSMCCAT}{\ensuremath{\eSMCCAT{\W}}}
\newcommand{\ONE}{\ensuremath{\textnormal{\text{1}}}}
\newcommand{\bI}{\ensuremath{\textnormal{\text{I}}}}
\newcommand{\MCAT}{\ensuremath{\operatorname{\textnormal{\text{MCAT}}}}}
\newcommand{\SMCAT}{\ensuremath{\operatorname{\textnormal{\text{SMCAT}}}}}
\newcommand{\SMCCAT}{\ensuremath{\operatorname{\textnormal{\text{SMCCAT}}}}}
\newcommand{\ECAT}{\ensuremath{\int \eCAT{(-)}}}
\newcommand{\ESMCCAT}{\ensuremath{\int \eSMCCAT{(-)}}}
\newcommand{\ENRSMCCAT}{\ESMCCAT}
\newcommand{\TWOCAT}{\ensuremath{\operatorname{\textnormal{\text{2CAT}}}}}
\newcommand{\MTWOCAT}{\ensuremath{\operatorname{\textnormal{\text{M2CAT}}}}}
\newcommand{\SMTWOCAT}{\ensuremath{\operatorname{\textnormal{\text{SM2CAT}}}}}
\newcommand{\ob}{\ensuremath{\operatorname{\textnormal{\textsf{Ob}}}}}
\newcommand{\Dom}{\ensuremath{\operatorname{\textnormal{\textsf{Dom}}}}}
\newcommand{\Enr}{\ensuremath{\operatorname{\textnormal{\textsf{Enr}}}}}
\newcommand{\Shv}{\ensuremath{\textnormal{\textnormal{Shv}}}}
\newcommand{\Ev}{\ensuremath{\textnormal{\textsf{Ev}}}}
\newcommand{\SET}{\ensuremath{\operatorname{\textnormal{\text{SET}}}}}
\newcommand{\Mod}[1]{\ensuremath{#1\textnormal{-\text{Mod}}}}
\newcommand{\Sch}{\ensuremath{\operatorname{\textnormal{\text{Sch}}}}}
\newcommand{\PsMon}{\ensuremath{\operatorname{\textnormal{\text{PsMon}}}}}
\newcommand{\SymPsMon}{\ensuremath{\operatorname{\textnormal{\text{SymPsMon}}}}}
\newcommand{\WMon}{\ensuremath{\operatorname{\textnormal{\text{WMon}}}}}
\newcommand{\op}{\ensuremath{\textnormal{op}}}
\newcommand{\co}{\ensuremath{\textnormal{co}}}
\newcommand{\coop}{\ensuremath{\textnormal{coop}}}
\newcommand{\pushoutcorner}{\ar@{}[dr]|(.3)\ulcorner}
\newcommand{\pullbackcorner}{\ar@{}[dr]|(.3)\lrcorner}
\newcommand{\widegrave}[1]{\grave{\wideparen{#1}}}
\newcommand{\cmt}[1]{}
\begin{document}

\author{\normalsize  Rory B. B. Lucyshyn-Wright\thanks{The author gratefully acknowledges financial support in the form of an NSERC Postdoctoral Fellowship.}\let\thefootnote\relax\footnote{Keywords: monoidal category; closed category; enriched category; enriched monoidal category; monoidal functor; monoidal adjunction; 2-category; 2-functor; 2-fibration; pseudomonoid}\footnote{2010 Mathematics Subject Classification: 18D15, 18D10, 18D20, 18D25, 18A40, 18D05, 18D30}
\\
\small University of Cambridge, Wilberforce Road, Cambridge, UK}

\title{\large \textbf{Relative symmetric monoidal closed categories I:}\\ \large\textbf{Autoenrichment and change of base}\\
       \emptybox\\
       \normalsize{\textsl{Dedicated to G. M. Kelly on the occasion of the fiftieth anniversary}\\\textsl{ of the La Jolla Conference on Categorical Algebra, 1965}}}

\date{}

\maketitle

\abstract{Symmetric monoidal closed categories may be related to one another not only by the functors between them but also by enrichment of one in another, and it was known to G.~M. Kelly in the 1960s that there is a very close connection between these phenomena.  In this first part of a two-part series on this subject, we show that the assignment to each symmetric monoidal closed category $\V$ its associated $\V$-enriched category $\uV$ extends to a 2-functor valued in an op-2-fibred 2-category of symmetric monoidal closed categories enriched over various bases.  For a fixed $\V$, we show that this induces a 2\nobreak\mbox{-}\nobreak\hspace{0pt}functorial passage from symmetric monoidal closed categories \textit{over} $\V$ (i.e., equipped with a morphism to $\V$) to symmetric monoidal closed $\V$-categories over $\uV$.  As a consequence, we find that the enriched adjunction determined a symmetric monoidal closed adjunction can be obtained by applying a 2-functor and, consequently, is an adjunction in the 2-category of symmetric monoidal closed $\V$-categories.
}

\section{Introduction} \label{sec:intro}

In Grothendieck's approach to algebraic geometry, one studies spaces over a given base space $S$, and particularly schemes (or algebraic spaces, or stacks) over a base scheme $S$, thus working within the slice category $\Sch\slash S$, and then \textit{change of base} along a morphism becomes important.  This \textit{relative point of view}, coupled with Grothendieck's practice of studying a space by means of the category of sheaves thereupon, also led to a relative point of view on categories, from which sprouted the notions of \textit{fibred category} \cite{Gr:Fibred} and \textit{indexed category}, and the study of toposes over a base.

Another distinct notion of \textit{relative} category is the concept of \textit{enriched category}, which arose with the observation that in many categories $\C$ the set $\C(C,D)$ of all morphisms between objects $C$ and $D$ of $\C$ is merely the `underlying set' of some more substantially structured mathematical object, such as an abelian group, simplicial set, or cochain complex, and so is but a pale shadow of an object $\C(C,D)$ of another category $\V$.  The notion of \textit{$\V$-enriched category} developed through the work of several authors in the first half of the 1960s, and a comprehensive basis for the study of $\V$-categories was expounded by Eilenberg and Kelly in the proceedings of the 1965 Conference on Categorical Algebra at La Jolla \cite{EiKe}.  The latter work made it clear that the theory of $\V$-categories gains considerable depth when $\V$ is \textit{itself} $\V$-enriched, so that $\V$ is a \textit{closed category}.  We denote the resulting $\V$-category by $\uV$ and call it the \textit{autoenrichment} of $\V$.

For example, given a scheme $S$, the following are closed categories and so are enriched in themselves; see, e.g., \cite[3.5.2]{Lip} for (ii) and (iii).
\begin{enumerate}
\item[(i)] The category of sheaves $\Shv(S)$ for, say, the small Zariski site of $S$.
\item[(ii)] The category $\Mod{\cO_S}$ of (sheaves of) $\cO_S$-modules.
\item[(iii)] The derived category $\mathop{\textnormal{D}}(\Mod{\cO_S})$.
\end{enumerate}
While the \textit{hom} operation on $\cO_S$-modules witnesses that $\Mod{\cO_S}$ is closed, the related \textit{tensor product} of $\cO_S$-modules makes this category \textit{symmetric monoidal closed}.  As a topos, $\Shv(S)$ is also symmetric monoidal closed, but with the cartesian product of sheaves $\times$ playing the role of monoidal product.  Now letting $X$ be a scheme over $S$, via $g:X \rightarrow S$, and taking
\begin{enumerate}
\item[(i)] $\M = \Shv(X)$, $\V = \Shv(S)$,
\item[(ii)] $\M = \Mod{\cO_X}$, $\V = \Mod{\cO_S}$, or
\item[(iii)] $\M = \mathop{\textnormal{D}}(\Mod{\cO_X})$, $\V = \mathop{\textnormal{D}}(\Mod{\cO_S})$
\end{enumerate}
we obtain in each case a morphism of symmetric monoidal closed categories
$$G:\M \longrightarrow \V\;,$$
i.e., a \textit{symmetric monoidal (closed) functor}, namely the direct image functor $g_*$, which participates with the corresponding inverse image functor $F = g^*$ in a \textit{symmetric monoidal (closed) adjunction} $F \dashv G:\M \rightarrow \V$ \pbref{exa:smc_adj_sheaves}.  As was observed by Eilenberg and Kelly in a general setting \cite[I 6.6]{EiKe}, $\M$ therefore acquires the structure of a $\V$-enriched category, and $G$ becomes a $\V$-enriched functor; moreover the adjunction becomes $\V$-enriched \cite[5.1]{Ke:EnrAdj}.  In fact, all the symmetric monoidal closed structure involved becomes entirely $\V$-functorial and $\V$-natural, so that $\M$ is a \textit{symmetric monoidal closed $\V$-category} and $F \dashv G$ a \textit{symmetric monoidal closed $\V$-adjunction} \pbref{thm:enr_smcadj}.

In Part I of the present work, we study several aspects of the rather subtle 2\nobreak\mbox{-}\nobreak\hspace{0pt}functoriality of the autoenrichment assignment $\V \mapsto \uV$, as well as the 2-functoriality of related processes by which a symmetric monoidal closed category may acquire enrichment in another.  The present paper shall provide the basis for a 2-functorial study in Part II of the relations between the following three notions of \textit{relative} symmetric monoidal closed category, and variations thereupon:
\begin{enumerate}
\item Symmetric monoidal closed categories $\M$ equipped with a morphism $\M \rightarrow \V$.
\item Symmetric monoidal closed $\V$-categories $\M$.
\item The fully $\V$-enriched analogue of 1.
\end{enumerate}
In Part II we shall establish several equivalences of \mbox{2-categories} that provide elaborations and variations on a seemingly unpublished result of G.~M. Kelly in this regard \cite{St:CatsPost}, and we shall accord particular attention to those cases in which the morphism $\M \rightarrow \V$ is equipped with a left adjoint.  The categories of algebras of suitable \textit{commutative monads} on $\V$ constitute a broad class of examples of such categories $\M$ \cite{Kock:ClsdCatsGenCommMnds}, \cite[5.5.4, 5.5.7]{Lu:PhD}.

In order to understand the sense in which autoenrichment is 2-functorial, let us return to the above example of a scheme $X$ over a base scheme $S$.  The direct image functor $G:\M \rightarrow \V$ determines a \textit{change-of-base} 2-functor
$$G_*:\eCAT{\M} \longrightarrow \VCAT$$
from the 2-category of $\M$-categories to the 2-category of $\V$-categories (\cite[I 10.5]{EiKe}).  By applying $G_*$ to the autoenrichment $\uM$ we find that $\M$ is $\V$-enriched; for example, when $\M = \Mod{\cO_X}$, the $\V$-valued hom-objects for $G_*\uM$ are the $\cO_S$-modules
$$(G_*\uM)(M,N) = g_*(\mathscr{H}\text{\kern -3pt {\calligra\large om}}_{{}_{\cO_X}}\kern -3pt(M,N))\;\;\;\;(M,N \in \Mod{\cO_X}).$$
\cmt{Typesetting of sheaf hom is due to Daniel Miller (http://tex.stackexchange.com/questions/141434/how-to-type-sheaf-hom).}Further, $G$ determines a $\V$-functor\footnote{In \cite{EiKe} the notation $\widehat{G}$ is used, but we shall see a reason for the present notation $\grave{G}$ in \bref{thm:enr_smcadj} and \bref{rem:enr_smcadj}.}
$$\grave{G}:G_*\uM \longrightarrow \uV\;.$$
The same is true for an arbitrary symmetric monoidal closed functor $G$ (as was observed by Eilenberg and Kelly), and the assignment $G \mapsto \grave{G}$ can be seen as functorial as soon as we construe $\grave{G}$ not as a mere $\V$-functor but as a 1-cell over $G$ in an \textit{op-2-fibred} 2-category of categories enriched over various bases, as follows.  Eilenberg and Kelly showed that the assignment $G \mapsto G_*$ gives rise to a 2-functor
$$\eCAT{(-)}:\SMCCAT \longrightarrow \TWOCAT,\;\;\;\;\V \mapsto \VCAT$$
from the 2-category $\SMCCAT$ of symmetric monoidal closed categories to the 2\nobreak\mbox{-}\nobreak\hspace{0pt}category $\TWOCAT$ of 2-categories.  Through a variation on Grothendieck's construction of the fibred category determined by a pseudofunctor, work of Bakovi\'c and of Buckley \cite{Buck} on 2-fibrations entails that the latter 2-functor determines an \textit{op-2-fibration}
$$\ECAT \longrightarrow \SMCCAT\;,\;\;\;\;(\V,\C) \mapsto \V$$
where $\ECAT$ is a 2-category whose objects are pairs $(\V,\C)$ consisting of a symmetric monoidal closed category $\V$ and a $\V$-category $\C$; see \bref{par:2groth_constr}.

As we shall see, the autoenrichment then extends to a 2-functor
$$\SMCCAT \longrightarrow \ECAT\;,\;\;\;\;\V \mapsto (\V,\uV)$$
valued in the 2-category $\ECAT$ of categories enriched over various bases.

But the autoenrichment $\uV$ of a symmetric monoidal closed category $\V$ is not merely an enriched category but in fact a \textit{symmetric monoidal closed $\V$-category}.  Therefore, we show that the autoenrichment in fact yields a 2-functor
$$\SMCCAT \longrightarrow \ESMCCAT\;,\;\;\;\;\V \mapsto \uV$$
valued in a 2-category $\ESMCCAT$ of symmetric monoidal closed $\V$-categories for various bases $\V$.  In order to define the latter 2-category, we must first study change of base for symmetric monoidal closed $\V$-categories.  Using general theory of pseudomonoids, we show that there is a 2-functor
$$\eSMCCAT{(-)}:\SMCCAT \longrightarrow \TWOCAT\;,\;\;\;\;\V \mapsto \VSMCCAT$$
sending $\V$ to the 2-category $\VSMCCAT$ of symmetric monoidal closed $\V$-categories.  By applying the Bakovi\'c-Buckley-Grothendieck construction to this 2-functor we therefore obtain an op-2-fibration
$$\ESMCCAT \longrightarrow \SMCCAT\;,\;\;\;\;(\V,\M) \mapsto \V$$
where $\ESMCCAT$ is a 2-category whose objects $(\V,\M)$ consist of a symmetric monoidal closed category $\V$ and a symmetric monoidal closed $\V$-category $\M$.  

Since the description of 2-cells and pasting in $\ESMCCAT$ is quite complicated, the 2-functoriality of the autoenrichment is therefore a correspondingly subtle matter.  Several useful lemmas, such as the $\V$-enriched monoidality of $\grave{G}$, are enfolded within the resulting 2-functorial autoenrichment.

For a \textit{fixed} symmetric monoidal closed category $\V$, we show that the autoenrichment induces a 2-functorial passage from symmetric monoidal closed categories $\M$ \textit{over} $\V$ (i.e., equipped with a morphism $\M \rightarrow \V$) to symmetric monoidal closed $\V$-categories over the autoenrichment $\uV$.  Explicitly, we obtain a 2-functor
$$\Enr_\V:\SMCCAT\sslash\V \longrightarrow \VSMCCAT \sslash \uV$$
between the \textit{lax slices}\footnote{We use the term \textit{lax slice} here for what some authors quite rightly call the \textit{colax slice 2-category} \pbref{def:lax_slice}.} over $\V$ in $\SMCCAT$ and $\VSMCCAT$ respectively, sending each object $G:\M \rightarrow \V$ of the former 2-category to the object $\grave{G}:G_*(\uM) \rightarrow \uV$ of the latter.   For example, given a scheme $X$ over $S$, the category $\M = \Mod{\cO_X}$ of $\cO_X$-modules is a symmetric monoidal closed category over $\V = \Mod{\cO_S}$ and, via this 2-functor, is moreover a symmetric monoidal closed $\V$-category over $\uV$.

Given a symmetric monoidal closed \textit{adjunction} $F \dashv G:\M \rightarrow \V$, Kelly showed that the associated $\V$-functor $\grave{G}$ has a left $\V$-adjoint \cite[5.1]{Ke:EnrAdj}.  Herein we show that an associated symmetric monoidal closed $\V$-adjunction
$$\acute{F} \dashv \grave{G}:G_*(\uM) \longrightarrow \uV$$
can be obtained simply by applying the composite 2-functor
$$
\xymatrix{
\SMCCAT\sslash\V \ar[r]^(.45){\Enr_\V} & \VSMCCAT \sslash \uV \ar[r] & \VSMCCAT
}
$$
to the adjunction $(F,\eta) \dashv (G,1_G):(\M,G) \rightarrow (\V,1_\V)$ in $\SMCCAT\sslash \V$.  For example, for a scheme $X$ over $S$, the adjoint pair $g^* \dashv g_*:\Mod{\cO_X} \rightarrow \Mod{\cO_S}$ carries the structure of a symmetric monoidal closed $\V$-adjunction for $\V = \Mod{\cO_S}$.

\section{Split 2-fibrations}\label{sec:2fibr}

Building on earlier work of Hermida and Bakovi\'c, Buckley \cite{Buck} defined a notion of \mbox{\textit{2-fibration}} and established a suitable analogue of Grothendieck's correspondence between fibrations and certain pseudofunctors \cite{Gr:Fibred}.  One has also the dual notions of op-2-fibration, co-2-fibration, and coop-2-fibration \cite[2.2.14]{Buck}.  A 2-functor $P:\F \rightarrow \K$ is said to be an \textit{op-2-fibration} if the 2-functor $P^\op:\F^\op \rightarrow \K^\op$ is a 2-fibration.  Since here we shall make detailed use of the notion of op-2-fibration, we now explicitly state its definition in terms of the notions of \textit{cocartesian 1-cell} and \textit{cartesian 2-cell}.

\begin{DefSub}
Let $P:\F \rightarrow \K$ be a 2-functor.
\begin{enumerate}
\item Given a 1-cell $f:A \rightarrow B$ in $\F$, an \textit{extension problem} for $f$ (relative to $P$) is a triple $(f,\alpha,\beta)$ in which $\alpha:g \Rightarrow h:A \rightarrow C$ is a 2-cell in $\F$ and $\beta:k \Rightarrow \ell:PB \rightarrow PC$ is a 2-cell in $\K$ such that the equation of 2-cells $\beta \circ Pf = P\alpha$ holds\footnote{We interpret such an equation of 2-cells as asserting also that the domain (resp. codomain) 1-cells are equal, i.e. that $k \circ Pf = Pg$ and $\ell \circ Pf = Ph$.}.  A \textit{solution} to $(f,\alpha,\beta)$ is a 2-cell $\beta':k' \Rightarrow \ell':B \rightarrow C$ in $\F$ such that $\beta' \circ f = \alpha$ and $P\beta' = \beta$.
\item A 1-cell $f$ in $\F$ is \textit{cocartesian} (with respect to $P$) if every extension problem for $f$ has a unique solution.
\item Given a 2-cell $\phi:f \Rightarrow g:A \rightarrow B$ in $\F$, a \textit{lifting problem} for $\phi$ (relative to $P$) is a triple $(\phi,\gamma,\kappa)$ where $\gamma:h \Rightarrow g$ is a 2-cell in $\F$, $\kappa:Ph \Rightarrow Pf$ is a 2-cell in $\K$, and $P\phi \cdot \kappa = P\gamma$.  A \textit{solution} to $(\phi,\gamma,\kappa)$ is a 2-cell $\kappa':h \Rightarrow f$ in $\F$ such that $\phi \cdot \kappa' = \gamma$ and $P\kappa' = \kappa$.
\item A 2-cell $\phi:f \Rightarrow g:A \rightarrow B$ in $\F$ is \textit{cartesian} if every lifting problem for $\phi$ has a unique solution, i.e. if $\phi$ is a cartesian arrow for the ordinary functor $P_{AB}:\F(A,B) \rightarrow \K(PA,PB)$.
\item $P$ is an \textit{op-2-fibration} if (i) for every 1-cell $k:K \rightarrow L$ in $\K$ and every object $A$ over $K$ in $\F$ (i.e., with $PA = K$), there exists a cocartesian 1-cell $A \rightarrow B$ over $k$ in $\F$, (ii) for every 2-cell $\kappa:k \Rightarrow \ell:K \rightarrow L$ in $\K$ and every 1-cell $g:A \rightarrow B$ over $\ell$ in $\F$, there exists a cartesian 2-cell $f \Rightarrow g:A \rightarrow B$ over $\kappa$ in $\F$, and (iii) the cartesian 2-cells are closed under whiskering with arbitrary 1-cells.
\item $P$ is a \textit{cloven} op-2-fibration if $P$ is an op-2-fibration equipped with a specified choice of cocartesian 1-cells and cartesian 2-cells.  Given data as in 5(i) and 5(ii) above, we write the associated cocartesian 1-cell as $\psi(k,A):A \rightarrow k_*(A)$ and the associated cartesian 2-cell as $\varphi(\kappa,g):\kappa^*(g) \Rightarrow g$.  We say that a 1-cell $f:A \rightarrow B$ in $\F$ is a \textit{designated cocartesian 1-cell} if $f = \psi(Pf,A)$, and we say that a 2-cell $\phi:f \Rightarrow g$ in $\F$ is a \textit{designated cartesian 2-cell} if $\phi = \varphi(P\phi,g)$.
\item $P$ is a \textit{split} op-2-fibration if $P$ is a cloven op-2-fibration such that the designated cocartesian 1-cells are closed under composition, the designated cartesian 2-cells are closed under vertical composition and under whiskering with arbitrary 1-cells, and the designated cocartesian (resp. cartesian) 1-cell (resp. 2-cell) associated to an identity 1-cell (resp. 2-cell) is again an identity 1-cell (resp. 2-cell).
\end{enumerate}
\end{DefSub}

\begin{ParSub}
If $f:A \rightarrow B$ is a cocartesian 1-cell with respect to $P:\F \rightarrow \K$, then $f$ is a cocartesian arrow with respect to the underlying ordinary functor of $P$.  Indeed, given 1-cells $g:A \rightarrow C$ in $\F$ and $k:PB \rightarrow PC$ in $\K$ with $k \circ Pf = Pg$, the extension problem $(f,1_g,1_k)$ for $f$ has a unique solution, and one readily finds that this solution must in fact be the identity 2-cell for a unique 1-cell $k':B \rightarrow C$ with $k' 
\circ f = g$ and $Pk' = k$.  We write such an extension problem as simply $(f,g,k)$.
\end{ParSub}

\begin{ParSub}[The Bakovi\'c-Buckley-Grothendieck construction]\label{par:2groth_constr}
By Buckley's work \cite{Buck}, there is a correspondence between
\begin{enumerate}
\item[(1)] 2-functors $\Phi:\K \rightarrow \TWOCAT$, and
\item[(2)] split op-2-fibrations $P:\F \rightarrow \K$ 
\end{enumerate}
where $\TWOCAT$ denotes the 2-category of 2-categories.  In fact, \cite[2.2.11]{Buck} establishes an equivalence of 3-categories relating $\TWOCAT$-valued 2-functors on $\K^{\co\op}$ to 2-fibrations $\F \rightarrow \K$, from which a similar equivalence between (1) and (2) then follows, through an adaptation that we shall now discuss.  Here we shall require only the passage from (1) to (2), which is a variation on Grothendieck's construction of the fibration associated with a $\CAT$-valued pseudofunctor \cite{Gr:Fibred}.  Whereas Buckley \cite[2.2.1]{Buck} constructs the 2-fibration associated to a 2-functor $\K^\coop \rightarrow \TWOCAT$, if instead given a 2-functor $\Phi:\K \rightarrow \TWOCAT$ one can apply Buckley's construction to the composite
\begin{equation}\label{eq:comp_w_op}(\K^\op)^{\coop} = \K^\co \xrightarrow{\Phi^\co} \TWOCAT^\co \xrightarrow{(-)^\op} \TWOCAT\end{equation}
in order to obtain a 2-fibration $Q:\G \rightarrow \K^\op$ and thus an op-2-fibration $P:\int\Phi \rightarrow \K$ by taking $\int\Phi = \G^\op$ and $P = Q^\op$.    The fibre 2-category $P^{-1}(K)$ over each object $K \in \K$ is then isomorphic to $\Phi K$.

Explicitly, the objects of $\int\Phi$ are pairs $A = (A^{\tinyminus},A^{\tinyplus})$ consisting of an object $A^{\tinyminus} \in \K$ and an object $A^{\tinyplus} \in \Phi(A^{\tinyminus})$.  The 1-cells $A \rightarrow B$ in $\int\Phi$ are pairs $f = (f^{\tinyminus},f^{\tinyplus})$ consisting of a 1-cell $f^{\tinyminus}:A^{\tinyminus} \rightarrow B^{\tinyminus}$ in $\K$ and a 1-cell $f^{\tinyplus}:f^{\tinyminus}_*(A^{\tinyplus}) \rightarrow B^{\tinyplus}$ in $\Phi(B^{\tinyminus})$, where we write $f^{\tinyminus}_* = \Phi(f^{\tinyminus}):\Phi(A^{\tinyminus}) \rightarrow \Phi(B^{\tinyminus})$.  The composite of 1-cells
$$A \xrightarrow{f} B \xrightarrow{g} C$$
in $\int\Phi$ is the pair consisting of the composite $g^{\tinyminus}f^{\tinyminus}:A^{\tinyminus} \rightarrow C^{\tinyminus}$ in $\K$ together with the composite 
$$(g^{\tinyminus}f^{\tinyminus})_*(A^{\tinyplus}) = g^{\tinyminus}_*f^{\tinyminus}_*(A^{\tinyplus}) \xrightarrow{g^{\tinyminus}_*(f^{\tinyplus})} g^{\tinyminus}_*(B^{\tinyplus}) \xrightarrow{g^{\tinyplus}} C^{\tinyplus}$$
in $\Phi(C^{\tinyminus})$.

A 2-cell $f \Rightarrow g:A \rightarrow B$ in $\int\Phi$ is a pair $\alpha = (\alpha^{\tinyminus},\alpha^{\tinyplus})$ consisting of a 2-cell $\alpha^{\tinyminus}:f^{\tinyminus} \Rightarrow g^{\tinyminus}$ in $\K$ together with a 2-cell
\begin{equation}\label{eq:gr_const_2cell}
\xymatrix{
f^{\tinyminus}_*(A^{\tinyplus}) \ar[d]_{\alpha^{\tinyminus}_*A^{\tinyplus}} \ar[drr]^{f^{\tinyplus}}="s1" & & \\
g^{\tinyminus}_*(A^{\tinyplus}) \ar[rr]_{g^{\tinyplus}} \ar@{}"s1";{}|(.4){}="s2"|(.6){}="t2" & & B^{\tinyplus}
  \ar@{=>}"s2";"t2"_{\alpha^{\tinyplus}}
}
\end{equation}
in $\Phi(B^{\tinyminus})$, where we write $\alpha^{\tinyminus}_* = \Phi(\alpha^{\tinyminus}):f^{\tinyminus}_* \Rightarrow g^{\tinyminus}_*$.  Vertical composition of 2-cells in $\int\Phi$ is given by taking the vertical composite of the associated 2-cells in $\K$ and pasting the 2-cells \eqref{eq:gr_const_2cell} in the relevant fibre 2-category.  Whiskering in $\int\Phi$ is given as follows.  Given
$$
\xymatrix{
A \ar[r]^{f} & B \ar@/^2ex/[rrr]^g="s1" \ar@/_2ex/[rrr]_h="t1" & & & C \ar[r]^u & D
  \ar@{}"s1";"t1"|(.4){}="s2"|(.6){}="t2"
  \ar@{=>}"s2";"t2"^\alpha
}
$$
in $\int\Phi$, the associated 2-cell $\alpha f:gf \Rightarrow hf$ in $\int\Phi$ consists of the 2-cell $\alpha^{\tinyminus} f^{\tinyminus}:g^{\tinyminus}f^{\tinyminus} \Rightarrow h^{\tinyminus}f^{\tinyminus}$ in $\K$ together with the composite 2-cell
$$
\xymatrix{
g^{\tinyminus}_*f^{\tinyminus}_*(A^{\tinyplus}) \ar[d]_{(\alpha^{\tinyminus} f^{\tinyminus})_*A^{\tinyplus}\:=\:\alpha^{\tinyminus}_*f^{\tinyminus}_*(A^{\tinyplus})} \ar[r]^{g^{\tinyminus}_*(f^{\tinyplus})} & g^{\tinyminus}_*(B^{\tinyplus}) \ar@{}[dl]|(.45){}="s3"|(.55){}="t3" \ar[d]_{\alpha^{\tinyminus}_* B^{\tinyplus}} \ar[drr]^{g^{\tinyplus}}="s1" & & \\
h^{\tinyminus}_*f^{\tinyminus}_*(A^{\tinyplus}) \ar[r]_{h^{\tinyminus}_*(f^{\tinyplus})} & h^{\tinyminus}_*(B^{\tinyplus}) \ar[rr]_{h^{\tinyplus}} \ar@{}"s1";{}|(.4){}="s2"|(.6){}="t2" & & C^{\tinyplus}
  \ar@{=>}"s2";"t2"_{\alpha^{\tinyplus}}
  \ar@{=}"s3";"t3"
}
$$
in $\Phi(C^{\tinyminus})$, in which the indicated identity 2-cell is obtained by the naturality of $\alpha^{\tinyminus}_*$.  The associated 2-cell $u\alpha:ug \Rightarrow uh$ in $\int\Phi$ consists of the 2-cell $u^{\tinyminus}\alpha^{\tinyminus}:u^{\tinyminus}g^{\tinyminus} \Rightarrow u^{\tinyminus}h^{\tinyminus}$ in $\K$ together with the composite 2-cell
$$
\xymatrix{
u^{\tinyminus}_*g^{\tinyminus}_*(B^{\tinyplus}) \ar[d]_{(u^{\tinyminus}\alpha^{\tinyminus})_*B^{\tinyplus}\:=\:u^{\tinyminus}_*\alpha^{\tinyminus}_*B^{\tinyplus}} \ar[drr]^{u^{\tinyminus}_*(g^{\tinyplus})}="s1" & & & \\
u^{\tinyminus}_*h^{\tinyminus}_*(B^{\tinyplus}) \ar[rr]_{u^{\tinyminus}_*(h^{\tinyplus})} \ar@{}"s1";{}|(.4){}="s2"|(.6){}="t2" & & u^{\tinyminus}_*(C^{\tinyplus}) \ar[r]^{u^{\tinyplus}} & D^{\tinyplus}
  \ar@{=>}"s2";"t2"_{u_*(\alpha^{\tinyplus})}
}
$$
in $\Phi(D^{\tinyminus})$.

In $\int\Phi$, the designated cocartesian 1-cell $\psi(k,A):A \rightarrow k_*(A)$ associated to a 1-cell $k:K \rightarrow L$ in $\K$ and an object $A = (K,A^{\tinyplus})$ of $\int\Phi$ is defined as $(k,1_{k_*(A^{\tinyplus})}):(K,A^{\tinyplus}) \rightarrow (L,k_*(A^{\tinyplus}))$.  The designated cartesian 2-cell $\varphi(\kappa,g):\kappa^*(g) \Rightarrow g$ associated to a 2-cell $\kappa:k \Rightarrow \ell$ of $\K$ and a 1-cell $g:A \rightarrow B$ over $\ell$ in $\int\Phi$ is defined as $(\kappa,1):(k,g^{\tinyplus} \circ \kappa_*A^{\tinyplus}) \Rightarrow (\ell,g^{\tinyplus}) = g$.
\end{ParSub}

\section{Monoidal and enriched categories} \label{sec:enr_mon_func}

We shall employ the theory of monoidal categories, closed categories, and categories enriched in a monoidal category $\V$, as expounded in \cite{EiKe,Ke:Ba}.  Unless otherwise indicated, we shall denote by $\V$ a given closed symmetric monoidal category.  Since we will avail ourselves of enrichment with respect to each of multiple given categories, we shall include an explicit indication of $\V$ when employing notions such as $\V$-category, $\V$-functor, and so on, omitting the prefix $\V$ only when concerned with the corresponding notions for non-enriched or \textit{ordinary} categories.  Whereas the arrows $f:A \rightarrow A'$ of the \textit{underlying ordinary category} $\A_0$ of a $\V$-category $\A$ are, concretely, arrows $I \rightarrow \A(A,A')$ in $\V$, we nevertheless distinguish notationally between $f$ and the latter arrow in $\V$, which we write as $[f]$ and call \textit{the name of $f$}.  We denote by $\VCAT$ the 2-category of all $\V$-categories.

\begin{ParSub}\label{par:clsmcat_notn}
Given a closed symmetric monoidal category $\V$, we denote by $\uV$ the canonically associated $\V$-category whose underlying ordinary category is isomorphic to $\V$ (and shall be identified with $\V$ whenever convenient); in particular, the internal homs in $\V$ will therefore be denoted by $\uV(V_1,V_2)$.  The canonical `evaluation' morphisms $V_1 \otimes \aeV(V_1,V_2) \rightarrow V_2$ are denoted by $\Ev_{V_1 V_2}$, or simply $\Ev$.
\end{ParSub}

\begin{ParSub}\label{par:cat_classes}
In general, the ordinary categories considered in this paper are not assumed locally small.  Hence whereas we would ideally like to be able to consider \textit{all} ordinary categories as $\SET$-enriched for a single fixed category $\SET$ of \textit{classes} (in an informal sense), we will instead fix a cartesian closed category $\SET$ of sets or classes and tacitly assume that certain given ordinary categories are $\SET$-enriched.  Hence although ideally we would denote by $\CAT$ the category of \textit{all} categories, and by $\MCAT$, $\SMCAT$, and $\SMCCAT$ the 2-categories of all monoidal, symmetric monoidal, and symmetric monoidal closed categories, we will instead employ these names to denote the full sub-2-categories thereof whose objects are $\SET$-enriched categories.  We will denote by $\TWOCAT$ the 2-category of \textit{all} 2-categories.  The reader who objects may instead take $\SET$ to be the usual category of sets and fix a Grothendieck universe $\mathfrak{U}$ with respect to which $\SET$ is $\mathfrak{U}$-small, then take $\CAT$, $\MCAT$, $\SMCAT$ to consist of just the locally small $\mathfrak{U}$-small categories, taking $\VCAT$ for $\V \in \MCAT$ to consist of all $\mathfrak{U}$-small $\V$-categories.  One can then fix a larger universe $\mathfrak{U}'$ containing $\mathfrak{U}$ and take $\TWOCAT$ to instead be the 2-category of $\mathfrak{U}'$-small 2-categories, noting that the 2-categories $\CAT$ and $\VCAT$ will then be objects of $\TWOCAT$.
\end{ParSub}

\section{Pseudomonoids and monoidal $\V$-categories}

Recall that a monoidal category is a category $\M$ equipped with functors \linebreak[4]\mbox{$\otimes:\M \times \M \rightarrow \M$}, $I_\M:\ONE \rightarrow \M$ and natural isomorphisms $a,\ell,r$ such that certain diagrams commute.  Fixing a symmetric monoidal category $\V$ and instead considering a given $\V$-category $\M$ with $\V$-functors $\otimes:\M \otimes \M \rightarrow \M$, $I_\M:\bI \rightarrow \M$ and $\V$-natural isomorphisms $a,\ell,r$ such that the formally analogous diagrams commute, one obtains the notion of \textit{monoidal $\V$-category}, and similarly, the notion of \textit{symmetric monoidal $\V$-category}, equipped with a $\V$-natural \textit{symmetry} $s$.  Here we have employed the \textit{tensor product} $\A \otimes \B$ of $\V$-categories $\A,\B$, and the unit $\V$-category $\bI$, which are part of a symmetric monoidal structure on the category $\VCAT$ of $\V$-categories.

In fact, $\VCAT$ is a \textit{symmetric monoidal 2-category} \cite[\S 1.4]{Ke:Ba}, that is, a symmetric monoidal $\mathbb{CAT}$-category\footnote{Whereas the term \textit{symmetric monoidal 2-category} is interpreted in a more general sense in \cite{GPS,McCr} as denoting a symmetric monoidal bicategory whose underlying bicategory is a 2-category, we employ this term here in the \textit{strict 2-categorical sense}, i.e. the more narrow $\mathbb{CAT}$-enriched sense given above.} where $\mathbb{CAT}$ denotes the cartesian monoidal category of all categories\footnote{The reader who has defined $\VCAT$ to consist of only the $\mathfrak{U}$-small $\V$-categories for a universe $\mathfrak{U}$ \pbref{par:cat_classes} can take $\mathbb{CAT}$ to be the category of $\mathfrak{U}$-small categories, as $\VCAT$ is indeed locally $\mathfrak{U}$-small in this case.}.  Monoidal 2-categories and the more general \textit{monoidal bicategories} have been studied by several authors, e.g. \cite{GPS,DayStr,McCr,SchPr}.  The notion of monoidal $\V$-category can be defined entirely in terms of the structure of $\VCAT$ as a monoidal 2-category, so that a monoidal $\V$-category is equally a \textit{pseudomonoid} in the monoidal 2-category $\VCAT$.  By definition, a pseudomonoid in an arbitrary monoidal 2-category $\K$ consists of an object $\M$ of $\K$ with 1-cells $\bullet:\M \otimes \M \rightarrow \M$, $i:\bI \rightarrow \M$ and invertible 2-cells $a,\ell,r$ subject to certain equations of 2-cells; a definition is given in \cite{McCr}, generalizing slightly the definition given in \cite{DayStr} for $\K$ instead a \textit{Gray monoid}.  The associativity and left-unit 2-cells $a,\ell$ assume the following forms when we omit the associativity 1-cells in $\K$ and write $\lambda$ for the relevant left-unit 1-cell in $\K$,
$$
\xymatrix{
\M\otimes\M\otimes\M \ar[d]_{\bullet\otimes 1} \ar[r]^(.6){1\otimes\bullet} & \M\otimes\M \ar[d]^\bullet & & \bI \otimes \M \ar[d]_{i\otimes 1} \ar@/^3ex/[dr]^{\lambda}|{}="t2" & \\
\M\otimes\M \ar@{}[ur]|(.4){}="s1"|(.6){}="t1"^a \ar[r]_\bullet & \M & & \M\otimes\M \ar@{};"t2"|(.5){}="s3"|(.7){}="t3" \ar[r]_\bullet & \M
\ar@{=>}"s1";"t1"^a \ar@{=>}"s3";"t3"^{\ell}
}
$$
and the right-unit 2-cell $r$ takes an analogous form.  When $\K$ is a \textit{symmetric} monoidal 2-category and so carries a symmetry $\sigma_{\A\B}:\A\otimes\B \rightarrow \B\otimes\A$ $(\A,\B \in \K)$, a pseudomonoid $\M$ in $\K$ is said to be \textit{symmetric} if it is equipped with a 2-cell $s$ from $\M\otimes\M\xrightarrow{\bullet}\M$ to $\M\otimes\M \xrightarrow{\sigma_{\M\M}} \M\otimes\M \xrightarrow{\bullet} \M$ satisfying certain equations \cite{McCr}.

\begin{ExaSub}\label{exa:v_is_a_sm_vcat}
Given a symmetric monoidal closed category $\V$, it follows from \cite[III 6.9, III 7.4]{EiKe} that $\uV$ itself is a symmetric monoidal $\V$-category.  The tensor product $\V$-functor $\otimes:\uV \otimes \uV \rightarrow \uV$ is given on homs by the morphisms 
$$(\uV \otimes \uV)((V,W),(V',W')) = \uV(V,V') \otimes \uV(W,W') \rightarrow \uV(V \otimes W, V' \otimes W')$$
(with $V,W,V',W' \in \V$) obtained as transposes of the composites
$$V \otimes W \otimes \uV(V,V') \otimes \uV(W,W') \xrightarrow{1 \otimes s \otimes 1} V \otimes \uV(V,V') \otimes W \otimes \uV(W,W') \xrightarrow{\Ev \otimes \Ev} V' \otimes W',$$
where $s$ is the symmetry carried by $\V$.
\end{ExaSub}

\begin{ParSub}\label{par:2cats_of_psmons}
The notion of monoidal functor also generalizes in an evident way to that of \textit{monoidal $\V$-functor}\footnote{When applying adjectives like ``monoidal'' in the $\V$-enriched context, we will occasionally include an explicit prefix ``$\V$-'', as in \textit{$\V$-monoidal}, for emphasis or ease of expression.} between monoidal $\V$-categories $\M,\N$, consisting of a $\V$-functor $S:\M \rightarrow \N$ and morphisms $e^S:I_\N \rightarrow SI_\M$ and $m^S_{MM'}:SM \otimes SM' \rightarrow S(M \otimes M')$ $\V$-natural in $M, M' \in \M$, subject to the usual equations.  This notion is an instance of the notion of (lax) \textit{monoidal 1-cell} between pseudomonoids $\M,\N$ in a monoidal 2-category $\K$ (\cite{McCr}, based on a more general notion from \cite{GPS}), consisting of a 1-cell $S:\M \rightarrow \N$ in $\K$ and 2-cells
$$
\xymatrix{
\M \otimes \M \ar[d]_\bullet \ar[r]^{S\otimes S} & \N\otimes \N \ar@{}[dl]|(.4){}="s1"|(.6){}="t1" \ar[d]^\bullet & & & \bI \ar[dl]_i \ar[dr]^i="t2" & & \\
\M \ar[r]_S & \N & & \M \ar@{};"t2"|(.5){}="t3"|(.7){}="s3" \ar[rr]_S & & \N
\ar@{=>}"s1";"t1"_{m^S} \ar@{=>}"s3";"t3"_{e^S}
}
$$
satisfying three equations, which just express in terms of pasting diagrams the familiar componentwise equations for a monoidal functor.  One has in particular the notion of \textit{strong} (resp. \textit{strict}) monoidal 1-cell, where the 2-cells $e^S, m^S$ are required to be isomorphisms (resp. identities).  In the case that $\M$ and $\N$ carry the structure of symmetric pseudomonoids, one may ask that the monoidal 1-cell $S$ also have the property of being \textit{symmetric}, which amounts to just one equation of 2-cells, namely
$$(S \circ s^\M) \cdot m^S = (m^S \circ \sigma_{\M\M}) \cdot (s^\N \circ (S\otimes S))$$
where $\sigma,s^\M,s^\N$ are the symmetries carried by $\K,\M,\N$, respectively, $\circ$ is whiskering, and $\cdot$ is vertical composition.  In particular, this specializes to the notion of \textit{symmetric monoidal $\V$-functor}.  Further, one has the notion of \textit{monoidal 2-cell} between monoidal 1-cells in $\K$; in $\VCAT$, the monoidal 2-cells are \text{monoidal $\V$-natural transformations}.  In particular, one obtains in the case $\V = \mathbb{CAT}$ the notions of \textit{monoidal 2-functor} and \textit{monoidal 2-natural transformation}.

Given a monoidal 2-category $\K$, one thus obtains an associated 2-category \linebreak[4] $\PsMon(\K)$ of pseudomonoids in $\K$ (\cite[\S 2]{McCr}), and for a symmetric monoidal 2-category $\K$, a 2-category $\SymPsMon(\K)$ of symmetric pseudomonoids, with symmetric monoidal 1-cells (\cite[\S 4]{McCr}).  For $\K = \VCAT$, $\PsMon(\K) = \VMCAT$ and $\SymPsMon(\K) = \VSMCAT$ are the 2-categories of monoidal and symmetric monoidal $\V$-categories, respectively.
\end{ParSub}

\begin{PropSub}[{\cite[\S 1,2,4]{McCr}}]\label{thm:mon2func_psmon}\emptybox
Every monoidal 2-functor $G:\K \rightarrow \sL$ lifts to a 2-functor $\PsMon(G):\PsMon(\K) \rightarrow \PsMon(\sL)$.  If $G$ is a symmetric monoidal 2-functor, then $G$ lifts to a 2-functor $\SymPsMon(G):\SymPsMon(\K) \rightarrow \SymPsMon(\sL)$.
\end{PropSub}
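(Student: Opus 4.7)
The plan is to verify that monoidal 2-functors transport pseudomonoid structure in essentially the same way that ordinary monoidal functors transport monoid structure, the key ingredient being that the structural 2-cells $m^G, e^G$ of $G$ satisfy coherence axioms that precisely mirror the associativity and unit conditions governing pseudomonoids.

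First, given a pseudomonoid $(\M,\bullet,i,a,\ell,r)$ in $\K$, I would define the candidate pseudomonoid on $G\M$ in $\sL$ by taking multiplication
\[ G\M\otimes G\M \xrightarrow{m^G_{\M,\M}} G(\M\otimes\M) \xrightarrow{G\bullet} G\M \]
and unit $\bI \xrightarrow{e^G} G\bI \xrightarrow{Gi} G\M$. The associator on $G\M$ is obtained by pasting $Ga$ with two instances of $m^G$ (one on each side) together with the $G$-associativity coherence 2-cell that mediates between $m^G_{\M\otimes\M,\M}\circ(m^G_{\M,\M}\otimes 1)$ and $m^G_{\M,\M\otimes\M}\circ(1\otimes m^G_{\M,\M})$; the left and right unitors are built analogously from $G\ell, Gr$ together with the unit coherence of $G$.

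Next I would verify the pseudomonoid axioms for $G\M$ by pasting-diagram manipulation: each side of the pentagon (resp. triangle) identity for $G\M$ expands into a pasting that can be rewritten, using the two $G$-coherence axioms and 2-functoriality of $G$, into $G$ applied to the corresponding pasting inside $\M$, whereupon the pentagon (resp. triangle) for $\M$ finishes the job. The assignment on 1-cells is similar: given a monoidal 1-cell $(S,m^S,e^S):\M\rightarrow\N$, define $GS:G\M\rightarrow G\N$ to carry the structure 2-cells
\[ m^{GS} = (Gm^S)\cdot m^G_{\M,\M}\text{-pasting},\qquad e^{GS} = (Ge^S)\cdot e^G, \]
where the pasting is the evident one making the relevant square of 1-cells into a 2-cell via $G$ and $m^G$. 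The three monoidal-functor axioms for $GS$ reduce, by the same pattern, to $G$ applied to the axioms for $S$. For a monoidal 2-cell $\alpha:S\Rightarrow T$, the 2-cell $G\alpha$ is monoidal because $G$ is a 2-functor and the axioms for $\alpha$ are equations of 2-cells in $\K$. Functoriality of $\PsMon(G)$ on vertical and horizontal composition then follows directly from the 2-functoriality of $G$ together with the fact that composing the transported structure 2-cells reassembles into the transport of the composite structure — again via the same $G$-coherence.

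For the symmetric case, one adds the symmetry $s^\M$ to the data and takes $s^{G\M}$ to be $Gs^\M$ pasted with the symmetric coherence 2-cell of $G$ (which expresses compatibility of $m^G$ with the symmetry $\sigma$ of $\K$ and $\sL$). The symmetry axiom for $G\M$, and the symmetry condition on $GS$ for a symmetric monoidal 1-cell $S$, then both follow by rewriting each side as $G$ applied to the corresponding condition inside $\K$, using the symmetric-monoidal-functor axiom for $G$ once to swap $\sigma_\sL$ past $m^G$.

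The main obstacle is purely bookkeeping: every claim reduces to a pasting-diagram equality in $\sL$ between two composites, each of which contains one instance of $G$ applied to a pseudomonoid axiom of $\M$ (or a monoidal-1-cell axiom of $S$) sandwiched between instances of the structural 2-cells of $G$ and their coherence. Because $\PsMon$ and $\SymPsMon$ are defined entirely in terms of the monoidal 2-category structure of the ambient 2-category, and monoidal 2-functors preserve exactly that structure up to specified coherent 2-cells, no new ideas beyond systematic diagram chasing are required; this is indeed the content of the cited sections of \cite{McCr}.
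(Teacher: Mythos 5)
Your construction is exactly the one the paper records: the paper gives no proof of this proposition, citing McCrudden \S 1,2,4, and its Remark \bref{rem:desc_psmon_g} spells out precisely the transported data you describe ($\bullet' = G(\bullet)\circ\mathsf{M}(G,\M)$, $i' = G(i)\circ\mathsf{E}(G,\M)$, $a' = G(a)\circ\mathsf{A}(G,\M)$, $m^{G(S)} = G(m^S)\circ\mathsf{M}(G,\M)$, etc.), with the coherence verifications left, as in your sketch, to routine pasting and to the cited reference. The only adjustment to note is that in this paper a monoidal 2-functor is the strict $\mathbb{CAT}$-enriched notion, whose axioms are equations of 2-natural transformations, so the ``$G$-associativity coherence 2-cell'' you paste in (mediating $m^G_{\M\otimes\M,\M}\circ(m^G_{\M,\M}\otimes 1)$ and $m^G_{\M,\M\otimes\M}\circ(1\otimes m^G_{\M,\M})$) is an identity---and it is exactly this equational coherence, together with 2-naturality of $m^G$, that makes the simple whiskering formulas above well-typed, so your more general description specializes correctly.
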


\begin{RemSub}\label{rem:desc_psmon_g}
Explicitly, the 2-functor $\PsMon(G)$ of \bref{thm:mon2func_psmon} sends each pseudomonoid $(\M,\bullet,i,a,\ell,r)$ in $\K$ to a pseudomonoid $(G\M,\bullet',i',a',\ell',r')$ in $\sL$, where the 1-cells $\bullet',i'$ and 2-cells $a',\ell',r'$ are obtained from $\bullet,i,a,\ell,r$ by applying $G$ and then composing or whiskering with the following evident canonical 1-cells, determined by the monoidal structure of $G$ (and named uniformly for convenience),
$$
\xymatrix@!C=16ex @R=1ex{
{\mathsf{M}(G,\M):(G\M)^{\otimes 2} \rightarrow G(\M^{\otimes 2})} & & *!<7.6ex,0ex>{\mathsf{E}(G,\M):\bI \rightarrow G\bI}\\
{\mathsf{L}(G,\M):\bI \otimes G\M \rightarrow G(\bI\otimes\M)} & & {\mathsf{R}(G,\M):G\M \otimes \bI \rightarrow G(\M \otimes \bI)}\\
&\mathsf{A}(G,\M):(G\M)^{\otimes 3} \rightarrow G(\M^{\otimes 3})& 
}
$$
so that
$$\bullet' = G(\bullet) \circ \mathsf{M}(G,\M)\;\;\;\;\;\;\;\;\;\;\;i' = G(i) \circ \mathsf{E}(G,\M)$$
$$a' = G(a) \circ \mathsf{A}(G,\M)\;\;\;\;\;\;\;\;\ell' = G(\ell) \circ \mathsf{L}(G,\M)\;\;\;\;\;\;\;\;r' = G(r) \circ \mathsf{R}(G,\M)\;.$$
Similarly, if $\M$ carries a symmetry $s$ then 
$$s' = G(s) \circ \mathsf{M}(G,\M)$$
is the associated symmetry on $G\M$.  The definition of $\PsMon(G)$ on 1-cells also follows this pattern:  Given $S:\M \rightarrow \N$ in $\PsMon(\K)$, the 1-cell $G(S):G\M \rightarrow G\N$ carries monoidal structure 2-cells
$$e^{G(S)} = G(e^S) \circ \mathsf{E}(G,\M)\;\;\;\;\;\;\;\;\;\;\;\;\;\;m^{G(S)} = G(m^S) \circ \mathsf{M}(G,\M)\;.$$
\end{RemSub}

\begin{ParSub}
We shall denote by $\MTWOCAT = \PsMon(\TWOCAT)$ and $\SMTWOCAT = \SymPsMon(\TWOCAT)$ the 2-categories of monoidal 2-categories and symmetric monoidal 2-categories, respectively.
\end{ParSub}

\begin{ThmSub}\label{twofunc_psmon}\emptybox
\begin{enumerate}
\item There are 2-functors
$$\PsMon:\MTWOCAT \longrightarrow \TWOCAT$$
$$\SymPsMon:\SMTWOCAT \longrightarrow \TWOCAT$$
sending each monoidal (resp. symmetric monoidal) 2-category $\K$ to the \mbox{2-category} of pseudomonoids (resp. symmetric pseudomonoids) in $\K$.
\item Given a monoidal 2-natural transformation $\phi:G \Rightarrow H:\K \rightarrow \sL$ and a pseudomonoid $\M$ in $\K$, the 1-cell $\phi_\M:G\M \rightarrow H\M$ is a strict monoidal 1-cell.
\end{enumerate}
\end{ThmSub}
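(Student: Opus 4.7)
The plan is to dispatch part (2) first, since it provides exactly the data needed to define $\PsMon$ on 2-cells in part (1). For (2), using the explicit description in Remark \ref{rem:desc_psmon_g}, I would show that the putative strict-monoidal structure 2-cells $m^{\phi_\M}$ and $e^{\phi_\M}$ on $\phi_\M:G\M \to H\M$ can be taken to be identities. This amounts to checking the equalities of 1-cells
\begin{equation*}
H(\bullet) \circ \mathsf{M}(H,\M) \circ (\phi_\M \otimes \phi_\M) \;=\; \phi_\M \circ G(\bullet) \circ \mathsf{M}(G,\M),
\end{equation*}
and the analogous equality involving $i$ and $\mathsf{E}$. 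The first equation follows by combining (i) the monoidality condition on $\phi$, which forces $\mathsf{M}(H,\M) \circ (\phi_\M \otimes \phi_\M) = \phi_{\M\otimes\M} \circ \mathsf{M}(G,\M)$ (this is the componentwise content of the monoidal 2-natural transformation axiom relating $\phi$ to the monoidal structures of $G$ and $H$), together with (ii) the 1-cell naturality of $\phi$ at $\bullet:\M\otimes\M \to \M$, which gives $H(\bullet) \circ \phi_{\M\otimes\M} = \phi_\M \circ G(\bullet)$. The unit case is handled analogously via $\mathsf{E}$.

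Then I would verify the three monoidal-functor coherence axioms with these identity structure 2-cells; they reduce to equations asserting that certain 2-cells obtained by whiskering $G(a), G(\ell), G(r)$ along $\phi$ agree with the corresponding 2-cells from $H(a), H(\ell), H(r)$, and these are precisely instances of the 2-cell naturality of $\phi$ at $a, \ell, r$. In the symmetric case, the extra symmetry condition on $\phi_\M$ follows in the same way from 2-naturality of $\phi$ at the symmetry 2-cell $s$ of $\M$ together with the (symmetric) monoidality of $\phi$.

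For part (1), I would then define $\PsMon(\phi):\PsMon(G) \Rightarrow \PsMon(H)$ to have $\M$-component equal to $\phi_\M$, viewed as a strict monoidal 1-cell via part (2). The 2-naturality of $\PsMon(\phi)$ in $\PsMon(\K)$ — at monoidal 1-cells $S:\M\to\N$ and at monoidal 2-cells — amounts to the 2-naturality of $\phi$ at the underlying 1-cell $S$ and at the underlying 2-cell in $\K$, together with a short check that the resulting naturality square of monoidal 1-cells in $\sL$ respects the monoidal structures (again using the monoidality of $\phi$ at $\mathsf{M}$ and $\mathsf{E}$). Finally, 2-functoriality of $\PsMon$ itself — preservation of identity and composite monoidal 2-functors, and of vertical and horizontal composites of monoidal 2-natural transformations — follows componentwise from the corresponding strict equalities in $\TWOCAT$ together with the explicit formulas of Remark \ref{rem:desc_psmon_g} for how monoidal structure is transported. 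The symmetric case is parallel, using $\SymPsMon$ and invoking the symmetry axiom.

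The main obstacle is not any conceptual difficulty but the diagrammatic bookkeeping in part (2): one must carefully unpack the pasting-diagram definitions of $\mathsf{M}, \mathsf{E}, \mathsf{A}, \mathsf{L}, \mathsf{R}$ associated to the monoidal 2-functors $G$ and $H$, and then verify that the monoidal-2-natural compatibility of $\phi$ — which is itself a pair of pasting equations — combines correctly with the 1- and 2-cell naturality of $\phi$ at the pseudomonoid structure of $\M$. Once this is organized, both parts fall out simultaneously, since part (1) reduces to assembling the componentwise strict monoidal 1-cells from part (2) into a genuine 2-natural transformation.
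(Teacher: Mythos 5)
Your argument is correct, but for the key part (2) it takes the route the paper deliberately declines to write out: a direct, componentwise verification. You show that the structure 2-cells of $\phi_\M$ may be taken to be identities by combining the (strict) monoidal-transformation equations for $\phi$ (giving $\mathsf{M}(H,\M)\circ(\phi_\M\otimes\phi_\M)=\phi_{\M\otimes\M}\circ\mathsf{M}(G,\M)$ and the analogous $\mathsf{E}$-equation) with 1-cell naturality of $\phi$ at $\bullet$ and $i$, and then discharge the three coherence axioms (plus the symmetry axiom) via 2-naturality of $\phi$ at $a,\ell,r,s$; this is exactly the check the paper relegates to a footnote as ``straightforward to verify directly.'' The paper's own proof of (2) is more conceptual: it views the pseudomonoid $\M$ as a monoidal homomorphism $1\to\K$ and $\phi$ as a monoidal transformation, and invokes Schommer-Pries's whiskering in the tricategory of (symmetric) monoidal bicategories, so that $\phi_\M$ automatically acquires a monoidal structure whose structure 2-cells are the pasting composites of \cite[3.3.2]{SchPr}; these are identities precisely because $\phi$ is a monoidal 2-natural transformation between monoidal 2-functors. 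What each approach buys: yours is elementary and self-contained but requires carrying out the pasting-diagram bookkeeping you acknowledge (including the compatibilities of $\phi$ with the derived constraints $\mathsf{A},\mathsf{L},\mathsf{R}$); the paper's outsources all coherence to the general whiskering construction at the cost of heavier machinery. For part (1) the two proofs essentially coincide: components $\phi_\M$, 1-cell functoriality via the formulas of \bref{rem:desc_psmon_g} (the equations $\mathsf{T}(GF,\M)=G(\mathsf{T}(F,\M))\circ\mathsf{T}(G,F\M)$), and preservation of vertical composition and whiskering because strict monoidal 1-cells are determined by their underlying 1-cells (local faithfulness of $\PsMon(\sL)\to\sL$), which is the same reduction you sketch.
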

\begin{proof}
For each of the 2-functors to be defined, the needed assignment on 1-cells is provided by \bref{thm:mon2func_psmon}.  We next prove 2, which will furnish the assignment on 2-cells.

It is straightforward to verify 2 directly\footnote{Indeed, one first uses the fact that $\phi$ is monoidal and natural to show that $\phi_\M$ commutes strictly with the product and unit 1-cells carried by $G\M$ and $H\M$, and then one uses the 2-naturality of $\phi$ to verify the needed four equations of 2-cells making $\phi_\M$ strict symmetric monoidal.}, but we can instead give a more conceptual argument, as follows.  The notion of pseudomonoid in $\K$ is an instance of the more general notion of \textit{monoidal homomorphism} of monoidal bicategories (in the terminology of \cite[3.2.2]{SchPr}, called weak monoidal homomorphism in \cite{McCr}).  Indeed, taking $1$ to denote the terminal 2-category, the monoidal homomorphisms $1 \rightarrow \K$ constitute a 2-category $\WMon(1,\K)$ (in the notation of \cite{McCr}), and $\PsMon(\K)$ is precisely the full sub-2-category consisting of those objects whose underlying homomorphism of bicategories is a 2-functor \cite[\S 2]{McCr}.  One has a similar description of $\SymPsMon(\K)$ as a full sub-2-category of the 2-category of \textit{symmetric} (equivalently, \textit{sylleptic}) monoidal homomorphisms $1 \rightarrow \K$ \cite[\S 4]{McCr}.  

Schommer-Pries has shown that symmetric monoidal bicategories constitute a tricategory \cite{SchPr} and in particular that one can whisker the 2-cells therein (called \textit{monoidal transformations}) with the 1-cells \cite[\S 3.3]{SchPr}, namely monoidal homomorphisms.  This whiskering does not depend on the symmetries carried by the 1-cells and so is available in exactly the same form in the non-symmetric case.  Hence, since the given pseudomonoid $\M$ is a monoidal homomorphism $M:1 \rightarrow \K$ and $\phi:G \Rightarrow H:\K \rightarrow \sL$ is a monoidal transformation, we can form the whiskered monoidal transformation $\phi M:GM \Rightarrow HM$, which is therefore a 1-cell of $\WMon(1,\sL)$ and so is a 1-cell of $\PsMon(\sL)$ whose underlying 1-cell in $\sL$ is $\phi_\M:G\M \rightarrow H\M$.  The monoidal structure that $\phi_\M$ thus acquires is given by the two pasting diagrams in \cite[3.3.2]{SchPr} (with $\overline{G} = H$, $F = M$, $\chi^G = m^G$, $\chi^{\overline{G}} = m^H$, $\theta = \phi$), but since $\phi:G \Rightarrow H$ is a monoidal 2-natural transformation of monoidal 2-functors, the 2-cells occupying the resulting pasting diagrams are identity 2-cells.  Hence $\phi_\M$ is strict monoidal as needed, and similarly $\phi_\M$ is strict symmetric monoidal in the symmetric case.

Hence 2 is proved.  Writing $G_\bullet = \PsMon(G)$, the strict monoidal 1-cells $\phi_\M$ of 2 constitute a 2-natural transformation $\phi_\bullet:G_\bullet \Rightarrow H_\bullet$.  Indeed, the naturality of $\phi_\bullet$ is  readily verified using the 2-naturality of $\phi$, and since the 2-functor $\PsMon(\sL) \rightarrow \sL$ is locally faithful, the further 2-naturality of $\phi_\bullet$ follows from that of $\phi$.

This completes the definition of the data for the needed 2-functors.  Using the description of $G_\bullet = \PsMon(G)$ given in \bref{rem:desc_psmon_g}, the equations
$$G_\bullet F_\bullet = (GF)_\bullet\;\;\;\;\;\;\;\;\;\;\;(1_\K)_\bullet = 1_{\PsMon(\K)}$$
for 1-cells $\J \xrightarrow{F} \K \xrightarrow{G} \sL$ in $\MTWOCAT$ follow readily from the fact that
$$\mathsf{T}(GF,\M) = G(\mathsf{T}(F,\M)) \circ \mathsf{T}(G,F\M)\;\;\;\;\;\;\;\;\mathsf{T}(1_\K,\M) = 1$$
for each symbol $\mathsf{T} \in \{\mathsf{M},\mathsf{E},\mathsf{L},\mathsf{R},\mathsf{A}\}$ \pbref{rem:desc_psmon_g}.  The functoriality of $\SymPsMon$ on 1-cells also follows similarly.  Further, $\PsMon$ preserves vertical composition of 2-cells, since for each 2-cell $\phi$ in $\MTWOCAT$ as in 2 above, the components of $\phi_\bullet$ are strict monoidal 1-cells and so are uniquely determined by their underlying 1-cells in $\sL$.  Moreover, $\PsMon$ preserves whiskering for the same reason, since given $\phi$ as in 2 above, together with 1-cells $F:\J \rightarrow \K$ and $T:\sL \rightarrow \sH$, the 2-natural transformations $(\phi F)_\bullet$, $\phi_\bullet F_\bullet$, $(T\phi)_\bullet$, $T_\bullet\phi_\bullet$ also consist of strict monoidal 1-cells.
\end{proof}

\begin{RemSub}
Elements of the above proof suggest that it might be possible to instead obtain the needed 2-functor $\SymPsMon$ as a `subfunctor' of a restriction of a `hom' trihomomorphism $\T(1,-)$ on the tricategory $\T$ of symmetric monoidal bicategories.
\end{RemSub}

\section{Enriched normalizations and comparisons}

Let $\V$ be a symmetric monoidal closed category.

\begin{DefSub}\label{def:can_normn_1}\emptybox
\begin{enumerate}
\item Given a monoidal $\V$-category $\M$, we let
$$U^\M := \M(I,-):\M \rightarrow \uV$$
denote the covariant hom $\V$-functor for the unit object $I$ of $\M$.
\item Given a monoidal $\V$-functor $S:\M \rightarrow \N$, we denote by $\theta^S:U^\M \rightarrow U^{\N}S$ the following composite $\V$-natural transformation:
$$\theta^S := \left(U^{\M} = \M(I,-) \xrightarrow{S} \N(SI,S-) \xrightarrow{\N(e^S,S-)} \N(I,S-) = U^{\N}S\right)\;.$$
\item Given a monoidal $\V$-functor $G:\M \rightarrow \uV$, we define \textit{the comparison morphism} $\kappa^G:U^{\M} \rightarrow G$ as the composite $\V$-natural transformation
$$\kappa^G := \left(U^{\M} \xrightarrow{\theta^G} U^{\uV}G \xrightarrow{\sim} G\right)$$
whose second factor is obtained from the canonical isomorphism $U^{\uV} \cong 1_{\uV}$ by whiskering with $G$.
\end{enumerate}
\end{DefSub}

\begin{PropSub}\label{thm:can_normn_mon}
Let $\M$ be a monoidal $\V$-category.
\begin{enumerate}
\item The $\V$-functor $U^{\M} = \M(I,-):\M \rightarrow \uV$ is $\V$-monoidal, when equipped with $[1_I]:I\rightarrow \M(I,I)$ (the name of the identity arrow on $I$) and the composites
\begin{equation}\label{eq:mon_str_can_normn}\M(I,M) \otimes \M(I,M') \xrightarrow{\otimes} \M(I \otimes I, M \otimes M') \xrightarrow{\M(\ell_I^{-1},1)} \M(I, M \otimes M')\end{equation}
($M,M' \in \M$), noting that $\ell_I = r_I:I \otimes I \rightarrow I$ in $\M$.
\item $U^{\M}$ is moreover symmetric $\V$-monoidal as soon as $\M$ is $\V$-symmetric-monoidal.
\end{enumerate}
\end{PropSub}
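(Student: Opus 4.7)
\medskip
\noindent\textbf{Proof plan.} The statement is the enriched analogue of the classical fact that in any monoidal category the representable functor $\M(I,-)$ is lax monoidal. My plan is to verify the axioms directly: although one might hope to derive this formally from some monoidal 2-functor via Proposition~\bref{thm:mon2func_psmon}, the $\V$-functor $U^\M$ depends intrinsically on the unit $I$ of $\M$ rather than arising from a universal construction on $\VMCAT$, so direct verification is the cleanest route.

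First I check that the morphism $m^{U^\M}_{MM'}$ of \eqref{eq:mon_str_can_normn} is $\V$-natural in $M,M' \in \M$: the first arrow is $\V$-natural because $\otimes:\M\otimes\M \to \M$ is a $\V$-bifunctor, and the second is obtained by whiskering the $\V$-natural transformation $\M(\ell_I^{-1},-):\M(I\otimes I,-) \Rightarrow \M(I,-)$ with $\otimes$, so $\V$-naturality is preserved.

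Next I verify the three coherence axioms of a lax monoidal $\V$-functor. For associativity, the two composites in question both send $(M_1,M_2,M_3)$ to a $\V$-morphism $\M(I,M_1) \otimes \M(I,M_2) \otimes \M(I,M_3) \to \M(I,M_1 \otimes M_2 \otimes M_3)$ of the form: apply the ternary tensor $\V$-functor $\M \otimes \M \otimes \M \to \M$ on hom-objects, then precompose with a morphism $I \to I \otimes I \otimes I$ in $\M$ built from $\ell_I^{-1}$ and $a$. The equality between them reduces, modulo matching $a^\M$ with $a^{\uV}$ via $\V$-naturality of $a^\M$, to a coherence identity in $\M$ that follows from Mac Lane coherence. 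For the left unit axiom, the required identity reduces (by $\V$-naturality of $\ell^\M$ applied to $\otimes$) to the equation $\ell^\M_M \circ (1_I \otimes f) \circ \ell_I^{-1} = f$ in $\M$ for $f:I \to M$, which is a standard consequence of the $\V$-naturality of $\ell^\M$; the right unit axiom is symmetric.

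For part 2, with $\M$ symmetric with symmetry $s^\M$, the symmetry axiom for $U^\M$ asks that $\M(I, s^\M_{MM'}) \circ m^{U^\M}_{MM'}$ equal $m^{U^\M}_{M'M} \circ s^{\uV}_{\M(I,M),\M(I,M')}$. Unwinding \eqref{eq:mon_str_can_normn} and using that $s^\M$ makes $\otimes$ symmetric as a $\V$-functor (so that $\otimes \circ \sigma_{\M,\M} = s^\M \circ \otimes$, where $\sigma$ is the symmetry of $\VCAT$), the equation reduces to the identity $s^\M_{I,I} \circ \ell_I^{-1} = \ell_I^{-1}$ in $\M$. This holds because $s^\M_{I,I} = 1_{I \otimes I}$ in any symmetric monoidal category, a consequence of the coherence relation $\ell_I \circ s^\M_{I,I} = r_I$ together with $\ell_I = r_I$.

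I expect the main obstacle to be bookkeeping rather than anything conceptual: each axiom is an equation of $\V$-morphisms out of a tensor of hom-objects and must be verified via diagram chase in $\V$ using the $\V$-naturality of $a,\ell,r,s$ in $\M$ and in $\uV$. Kelly's observation $\ell_I = r_I$ plays a modest but useful role in making \eqref{eq:mon_str_can_normn} unambiguous and in reducing the symmetry verification to the identity $s^\M_{I,I} = 1$.
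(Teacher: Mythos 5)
Your direct verification is essentially sound: the reductions you indicate (associativity to Mac Lane coherence in $\M$ via $\V$-naturality of $a$, the unit axioms to $\V$-naturality of $\ell$, and the symmetry axiom to $s_{I,I}=1_{I\otimes I}$, which is correct) do establish the proposition, with the one caveat that the unit-axiom step cannot literally be tested on arrows $f:I\rightarrow M$; it must be run as the corresponding equation of morphisms in $\V$, e.g.\ by noting that $\otimes_{(I,I)(I,M)}\circ([1_I]\otimes 1)$ is the action on homs of $I\otimes(-)$ and then invoking the $\V$-naturality of $\ell$ — which is what your closing remark about enriched diagram chases amounts to. The paper, however, takes a genuinely different and more conceptual route: it factors $U^\M$ as $\M \xrightarrow{\lambda^{-1}} \bI\otimes\M \xrightarrow{[I]\otimes 1_\M} \M^\op\otimes\M \xrightarrow{\M(-,-)} \uV$, cites Booker--Street for the fact that the hom $\V$-functor $\M(-,-)$ is a (symmetric) monoidal $\V$-functor, observes that $\lambda^{-1}$ is strict (symmetric) monoidal, and reduces the monoidality of $[I]:\bI\rightarrow\M^\op$ to the statement that $(I,\ell_I^{-1},1_I)$ is a monoid in $\M^\op$ (commutative when $\M$ is symmetric), so that $U^\M$ is a composite of monoidal $\V$-functors and parts 1 and 2 come out simultaneously with no coherence-axiom chases. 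That factorization buys brevity and delegates all coherence to a known external result, at the price of invoking the monoidal 2-category structure of $\VMCAT$ and the Booker--Street theorem; your argument is self-contained and elementary but must carry out the enriched bookkeeping explicitly. Your aside that the result cannot be obtained formally via \bref{thm:mon2func_psmon} is fair as far as it goes, but note that the paper does find a formal route — through the monoidality of the hom $\V$-functor rather than through a monoidal 2-functor.
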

\begin{proof}
It is shown in \cite[2.1]{BooSt} that $\M(-,-):\M^\op \otimes \M \rightarrow \uV$ is a monoidal $\V$-functor.  In particular, $\M^\op \otimes \M$ is a monoidal $\V$-category, since moreover $\VMCAT = \PsMon(\VCAT)$ is a symmetric monoidal category, indeed a symmetric monoidal 2-category, as is clear from remarks in the same paper \cite[\S 1, \S 4]{BooSt} (which in fact goes on to consider pseudomonoids in $\VMCAT$, which the authors call \textit{duoidal $\V$-categories}).  Moreover, if $\M$ is symmetric $\V$-monoidal then $\M(-,-)$ is a symmetric monoidal $\V$-functor by \cite[2.3]{BooSt}.  The $\V$-functor $U^\M = \M(I,-)$ can be obtained as a composite
\begin{equation}\label{eq:U_as_composite}\M \xrightarrow{\lambda^{-1}} \text{\bI} \otimes \M \xrightarrow{[I]\otimes 1_\M} \M^\op\otimes\M \xrightarrow{\M(-,-)} \uV\end{equation}
wherein $\bI$ is the unit $\V$-category, $\lambda$ is the left unit isomorphism carried by the monoidal 2-category $\VCAT$, and $[I]:\bI \rightarrow \M^\op$ is the unique $\V$-functor that sends the single object of $\bI$ to $I$.  But $\lambda:\bI\otimes \M \rightarrow \M$ is a strict monoidal $\V$-functor (and indeed serves as the left unit isomorphism carried by the monoidal 2-category $\VMCAT$), and $\lambda$ is symmetric as soon as $\M$ is symmetric; the same conclusions therefore apply to $\lambda^{-1}$.  To equip $[I]:\bI \rightarrow \M^\op$ with the structure of a monoidal $\V$-functor amounts to just endowing the object $I$ with the structure of a monoid in the monoidal category $\M^\op_0$, and indeed $(I,\ell_I^{-1},1_I)$ is a monoid in $\M^\op$ (by the coherence theorem for monoidal categories).  When $\M$ is symmetric, the resulting monoidal $\V$-functor $[I]$ is symmetric since the given monoid in $\M^\op$ is then commutative.  Hence the sequence \eqref{eq:U_as_composite} consists of monoidal (resp. symmetric monoidal) $\V$-functors, and the indicated monoidal structure on $\M(I,-)$ is thus obtained.
\end{proof}

\begin{PropSub}\label{thm:triv_normn}
$U^{\uV} \cong 1_{\uV}$ as monoidal $\V$-functors.
\end{PropSub}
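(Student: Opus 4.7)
The plan is to exhibit a monoidal $\V$-natural isomorphism whose underlying $\V$-natural transformation $\alpha: 1_{\uV} \Rightarrow U^{\uV}$ has component at $V \in \V$ given by $\alpha_V := [1_V]: I \to \uV(I, V)$, the name of $1_V$ under the standard identification of the underlying ordinary category of $\uV$ with $\V$; equivalently, $\alpha_V$ is the transpose of $\ell_V: I \otimes V \to V$ under the closed structure. That $\alpha$ is an invertible $\V$-natural transformation is the standard fact that $\uV(I,-) \cong 1_{\uV}$ as $\V$-functors.

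What remains is to verify that $\alpha$ is monoidal, i.e. compatible with the unit and multiplication morphisms constituting the monoidal structure on $U^{\uV}$ supplied by Proposition \bref{thm:can_normn_mon}(1). The unit condition demands that $\alpha_I$ coincide with $[1_I]: I \to \uV(I,I)$, which holds by the very definition of $\alpha_I$.

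For the multiplicativity condition, one must show that for all $V, W \in \V$ the composite
$$V \otimes W \xrightarrow{\alpha_V \otimes \alpha_W} \uV(I,V) \otimes \uV(I,W) \xrightarrow{\otimes} \uV(I \otimes I, V \otimes W) \xrightarrow{\uV(\ell_I^{-1}, 1)} \uV(I, V \otimes W)$$
equals $\alpha_{V \otimes W}$. Unwinding the description of the tensor $\V$-functor $\otimes:\uV \otimes \uV \to \uV$ from Example \bref{exa:v_is_a_sm_vcat} and passing to transposes under the closed structure, both sides reduce to the morphism $\ell_{V \otimes W}: I \otimes V \otimes W \to V \otimes W$, using only the naturality of $\ell$ and the defining properties of names under the closed structure.

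The principal obstacle is the multiplicativity check, but once expanded via transposes it is a routine coherence computation in $\V$. Alternatively, one may argue more conceptually using the composite description \eqref{eq:U_as_composite} of $U^{\uV}$: since $\lambda^{-1}$ is a strict monoidal $\V$-functor and $\uV(-,-):\uV^{\op} \otimes \uV \to \uV$ is a monoidal $\V$-functor \cite[2.1]{BooSt}, it suffices to observe that the monoid $(I,\ell_I^{-1},1_I)$ in $\uV^\op$ makes the pointing $[I]:\bI \to \uV^\op$ canonically equivalent to the strict-monoidal identity factorization of $1_{\uV}$ through this composite, whence the monoidal structures on $U^{\uV}$ and $1_{\uV}$ are transported to one another by $\alpha$.
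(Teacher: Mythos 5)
Your proposal is correct and takes essentially the same route as the paper: the paper likewise defines the isomorphism $1_{\uV} \cong U^{\uV}$ componentwise as the transposes $\xi_V : V \to \uV(I,V)$ of $\ell_V$, observes the unit law is immediate, and verifies the multiplication law by transposing both sides to morphisms $I \otimes V \otimes W \to V \otimes W$ and reducing each to $\ell$. Two minor quibbles: your initial description of the component as $[1_V] : I \to \uV(I,V)$ is mis-typed (the name of $1_V$ lands in $\uV(V,V)$; your ``equivalently, the transpose of $\ell_V$'' is the correct definition and is what your argument actually uses), and the transposed computation requires not just naturality of $\ell$ but also naturality of the symmetry $s$ and the coherence theorem for symmetric monoidal categories (as in the paper's appeal to \cite{EiKe}), which is nonetheless covered by the ``routine coherence computation'' you invoke.
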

\begin{proof}
Writing $U := U^{\uV}$, we have an isomorphism of $\V$-functors $\xi:1_{\uV} \xrightarrow{\sim} U$ whose components $\xi_V:V \xrightarrow{\sim} \uV(I,V)$ are obtained as the transposes of the canonical isomorphisms $\ell:I \otimes V \rightarrow V$.  It suffices to show that $\xi$ is monoidal.  The unit law for $\xi$ holds since $\xi_I \cdot e^{1_{\uV}} = \xi_I \cdot 1_I = \xi_I = [1_I] = e^{U}:I \rightarrow UI$.  Also, for each pair of objects $V,W \in \V$, the multiplication law
\begin{equation}\label{eq:mult_eqn_for_phi}m^{U}_{VW} \cdot (\xi_V \otimes \xi_W) = \xi_{V\otimes W} \cdot m^{1_{\uV}}_{VW}\;\;:\;\;V \otimes W \rightarrow U(V\otimes W) = \uV(I,V\otimes W)\end{equation}
for $\xi$ is verified by taking the transposes $I\otimes V\otimes W \rightarrow V\otimes W$ of the morphisms on both sides of this equation.  Indeed, using the definition of $m^{U}$ one finds that the transpose of the left-hand side is the clockwise composite around the periphery of the following diagram
$$
\xymatrix@C=12ex @R=4ex{
I\otimes V\otimes W \ar@/_4ex/[ddr]_\ell \ar[r]^{\ell^{-1}\otimes V\otimes W} & I\otimes I\otimes V\otimes W \ar[d]|{1\otimes s\otimes 1} \ar[r]^(.4){I\otimes I\otimes \xi_V\otimes \xi_W} & I\otimes I\otimes \uV(I,V)\otimes\uV(I,W) \ar[d]|{1\otimes s\otimes 1} \ar@/^16ex/[dd]|(.3){I\otimes I\otimes t} \\
 & I\otimes V\otimes I\otimes W \ar[d]|{\ell\otimes\ell} \ar[r]^(.4){I\otimes\xi_V\otimes I\otimes \xi_W} & I\otimes\uV(I,V)\otimes I\otimes\uV(I,W) \ar[dl]|{\Ev\otimes\Ev} \\
 & V \otimes W & I\otimes I\otimes\uV(I\otimes I,V\otimes W) \ar[l]^\Ev
}
$$
wherein the rightmost cell commutes by the definition of the morphism $t := \otimes_{(I,I)(V,W)}$ \pbref{exa:v_is_a_sm_vcat}, the upper square commutes by the naturality of $s$, the triangular cell within the interior commutes by the definition of $\xi$, and the leftmost cell commutes by the coherence theorem for symmetric monoidal categories (\cite[III 1.1]{EiKe}).  But the right-hand side of the needed equation \eqref{eq:mult_eqn_for_phi} is simply $\xi_{V\otimes W} \cdot 1_{V\otimes W} = \xi_{V\otimes W}$, whose transpose is indeed the morphism $\ell$ on the left side of the diagram.
\end{proof}

\begin{PropSub}\label{thm:cmp_uniq_mon_trans}\emptybox
\begin{enumerate}
\item Given a monoidal $\V$-functor $G:\M \rightarrow \uV$, the comparison morphism \linebreak[4]\mbox{$\kappa^G:U^\M \rightarrow G$} is the unique monoidal $\V$-natural transformation $U^\M \rightarrow G$.
\item Given a monoidal $\V$-functor $S:\M \rightarrow \N$, the transformation $\theta^S$ defined in \bref{def:can_normn_1} is the unique monoidal $\V$-natural transformation $U^\M \rightarrow U^\N S$, and $\theta^S = \kappa^{U^\N S}$.
\end{enumerate}
\end{PropSub}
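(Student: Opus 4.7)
The plan is to first verify that $\theta^S$ is a monoidal $\V$-natural transformation, and then to derive both uniqueness statements from the enriched Yoneda lemma applied to the representable $U^\M = \M(I,-)$, combined with the monoidal unit axiom. Granted these two inputs, part 1 will follow because $\kappa^G$ is by definition the vertical composite of $\theta^G$ with the monoidal isomorphism of \bref{thm:triv_normn} whiskered by $G$, and hence is itself monoidal; and part 2 will be obtained by applying part 1 to the monoidal $\V$-functor $U^\N S$, which is monoidal as a composite of monoidal $\V$-functors by \bref{thm:can_normn_mon}.

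To verify that $\theta^S$ is monoidal, the unit axiom reduces at once: since $U^\N S$ carries the composite monoidal structure with $e^{U^\N S} = U^\N(e^S) \cdot e^{U^\N} = [e^S]$, and since $\V$-functors preserve identities, one computes
$$\theta^S_I \cdot [1_I] \;=\; \N(e^S,1) \cdot S_{I,I} \cdot [1_I] \;=\; \N(e^S,1) \cdot [1_{SI}] \;=\; [e^S] \;=\; e^{U^\N S}\;.$$
The multiplication axiom is a routine but longer diagram chase: one unpacks both $m^{U^\M}$ and $m^{U^\N S}$ via the definition \eqref{eq:mon_str_can_normn}, and then invokes the $\V$-naturality of the multiplicative structure $m^S$ of $S$ together with the functoriality of $\N(-,-)$ and the naturality of the left unitor. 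This diagram chase is the principal bookkeeping obstacle.

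For uniqueness, the enriched Yoneda lemma supplies a bijection between $\V$-natural transformations $\alpha:U^\M \to G$ and elements of $\V_0(I,GI)$, sending $\alpha$ to its Yoneda element $\alpha_I \cdot [1_I]$. If $\alpha$ is monoidal, then the monoidal unit axiom forces
$$\alpha_I \cdot [1_I] \;=\; \alpha_I \cdot e^{U^\M} \;=\; e^G\,,$$
so at most one monoidal $\V$-natural transformation $U^\M \to G$ can exist. Combined with the existence of $\kappa^G$ as a monoidal $\V$-natural transformation from the previous paragraph, this proves part 1. Applying part 1 to $G := U^\N S$ then identifies the unique such transformation as both $\theta^S$ (monoidal by the first step) and $\kappa^{U^\N S}$, establishing part 2.
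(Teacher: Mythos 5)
Your proposal is correct, and its overall architecture (first show $\theta^S$ is monoidal, deduce monoidality of $\kappa^G$ via \bref{thm:triv_normn}, then prove uniqueness and specialize to $G = U^\N S$ for part 2) matches the paper's; the difference lies in how uniqueness is established. The paper proves uniqueness directly: for an arbitrary monoidal $\V$-natural $\phi:U^\M \rightarrow G$ it exhibits, for each $M \in \M$, a diagram in $\V$ whose commutativity (using the $\V$-naturality of $\phi$ for one cell and the monoidality of $\phi$, i.e.\ $\uV(e^G,1)$ versus $\uV(\phi_I,1)$, for another) forces $\phi_M = \kappa^G_M$. You instead invoke the weak enriched Yoneda lemma for the representable $U^\M = \M(I,-)$, so that any $\V$-natural $\alpha:U^\M \rightarrow G$ is determined by its Yoneda element $\alpha_I \cdot [1_I]$, and then the monoidal unit axiom pins that element down as $e^G$. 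This is a clean and valid shortcut; in effect the paper's peripheral diagram is the Yoneda argument unwound componentwise, so your route buys brevity and conceptual clarity at the cost of citing the Yoneda lemma, while the paper's is self-contained. Two small caveats on the part you defer: the multiplication law for $\theta^S$ is indeed a finite diagram chase (the paper carries it out), but the one non-obvious ingredient is not the naturality of the left unitor as you suggest; it is the unit coherence axiom of the monoidal $\V$-functor $S$, namely $S\ell^{-1}_I \cdot e^S = m^S_{II} \cdot (e^S \otimes e^S) \cdot \ell^{-1}_I$, alongside the $\V$-naturality of $m^S$ and the $\V$-functoriality of $\otimes$ and of $S$. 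Also, for your reduction of part 1 to monoidality of $\kappa^G$ you implicitly use that vertical composition and whiskering in $\VMCAT$ preserve monoidal transformations; this is standard and is exactly how the paper passes from $\theta^G$ to $\kappa^G$, but it is worth saying explicitly.
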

\begin{proof}
Firstly, in the situation of 2, we prove that $\theta^S$ is monoidal.  The unit law for $\theta^S$ is the commutativity of the periphery of the following diagram
$$
\xymatrix{
I \ar[d]_{e^{U^\M}\:=\:[1_I]} \ar[dr]|{[1_{SI}]} \ar[drr]|{[e^S]} \ar[rr]^{e^{U^\N}\:=\:[1_I]} & & *!<-3.4ex,0ex>{\N(I,I) = U^\N I} \ar[d]|{\N(I,e^S)\:=\:U^\N(e^S)}\\
*!<3.4ex,0ex>{U^\M I = \M(I,I)} \ar@/_5ex/[rr]|{\theta^S_I} \ar[r]_S & \N(SI,SI) \ar[r]_(.4){\N(e^S,1)} & *!<-3.4ex,0ex>{\N(I,SI)\:=\:U^\N SI}
}
$$
which clearly commutes.  The multiplication law for $\theta^S$ amounts to the commutativity of the periphery of the following diagram for each pair of objects $L,M \in \M$, since the the top side is $m^{U^\M}_{LM}$, the right side is $\theta^S_{L\otimes M}$, the left side is $\theta^S_L \otimes \theta^S_M$, and the bottom side is $m^{U^\N S}_{LM} = (U^\N m^S_{LM}) \cdot m^{U^\N}_{SL,SM}$.
$$
\renewcommand{\objectstyle}{\scriptstyle}
\renewcommand{\labelstyle}{\scriptstyle}
\xymatrix@C=6ex @R=7ex{
\M(I,L)\otimes\M(I,M) \ar[dd]_{S\otimes S} \ar[rr]^\otimes & & \M(I\otimes I,L\otimes M) \ar[d]_S \ar[r]^{\M(\ell^{-1},1)} & \M(I,L\otimes M) \ar[d]^S \\
& & *!<2.1ex,0ex>{\N(S(I\otimes I),S(L\otimes M))} \ar[r]^(.5){\N(S\ell^{-1},1)} \ar[d]|{\N(m^S,1)} & *!<.6ex,0ex>{\N(SI,S(L\otimes M))} \ar[dd]|{\N(e^S,1)} \\
\N(SI,SL)\otimes\N(SI,SM) \ar[d]|{\N(e^S,1)\otimes\N(e^S,1)} \ar[r]^\otimes & \N(SI\otimes SI,SL\otimes SM) \ar[d]|{\N(e^S\otimes e^S,1)} \ar[r]^{\N(1,m^S)} & \N(SI\otimes SI,S(L\otimes M)) \ar[d]|{\N((e^S \otimes e^S)\cdot \ell_I^{-1},1)} & \\
\N(I,SL)\otimes\N(I,SM) \ar[r]_\otimes & \N(I\otimes I,SL\otimes SM) \ar[r]_{\N(\ell^{-1},m^S)} & \N(I,S(L\otimes M)) \ar@{=}[r] & \N(I,S(L\otimes M))
}
$$
The large cell at the top-left corner commutes by the $\V$-naturality of $m^S$, the square at the bottom-left commutes by the $\V$-functoriality of $\otimes$, the square immediately to the right of this clearly commutes, and the square at the top-right corner commutes by the $\V$-functoriality of $S$.  The remaining cell at the bottom-right commutes since it follows from the fact that $S$ is a monoidal functor that $S\ell^{-1}_I \cdot e^S = m^S_{II} \cdot (e^S \otimes e^S) \cdot \ell^{-1}_I$.

Having thus shown that $\theta^S$ is monoidal, it now follows (in view of \ref{thm:triv_normn}) that $\kappa^G$ is monoidal in the situation of $1$.  For the needed uniqueness of $\kappa^G$ (and hence also of $\theta^S$), observe that given any $\phi:U^\M \rightarrow G$ in $\VMCAT$, we have for each $M \in \M$ a diagram
$$
\xymatrix@C=12ex @R=6ex{
\M(I,M) \ar@/_18ex/[ddd]_1 \ar[d]^{\M(I,-)_{IM}} \ar[r]^{G_{IM}} & \uV(GI,GM) \ar[d]_{\uV(\phi_I,1)} \ar@/^14ex/[dd]^{\uV(e^G,1)} \\
\uV(\M(I,I),\M(I,M)) \ar[d]^{\uV([1_I],1)} \ar[r]^{\uV{(1,\phi_M)}} & \uV(\M(I,I),GM) \ar[d]_{\uV([1_I],1)} \\
\uV(I,\M(I,M)) \ar[d]^\wr \ar[r]^{\uV(1,\phi_M)} & \uV(I,GM) \ar[d]^\wr \\
\M(I,M) \ar[r]_{\phi_M} & GM
}
$$
in $\V$.  The upper rectangle commutes by the $\V$-naturality of $\phi$, the rightmost cell commutes by the monoidality of $\phi$, and the other cells clearly commute.  But the clockwise composite around the periphery is $\kappa^G_M$, so  $\phi_M = \kappa^G_M$.
\end{proof}

\begin{DefSub}\label{def:normal_normalization}\emptybox
\begin{enumerate}
\item We say that a monoidal $\V$-functor $G:\M \rightarrow \N$ is \textit{$\V$-normal} if $\theta^G:U^{\M} \rightarrow U^{\N}G$ is an isomorphism.
\item A \textit{$\V$-normalization} for a given monoidal $\V$-category $\M$ is a monoidal $\V$-functor $G:\M \rightarrow \uV$ that is $\V$-normal, equivalently, whose comparison $\kappa^G:U^{\M} \rightarrow G$ is an isomorphism.
\item We call $U^\M:\M \rightarrow \uV$ the \textit{canonical $\V$-normalization} for $\M$.
\end{enumerate}
\end{DefSub}

\begin{RemSub}
In the case that $\M$ is a symmetric monoidal $\V$-category, any $\V$-normalization $G:\M \rightarrow \uV$ is necessarily symmetric, since $G \cong U^{\M}$ as monoidal $\V$-functors and $U^{\M}$ is symmetric \pbref{thm:can_normn_mon}.
\end{RemSub}

\begin{RemSub}\label{rem:ord_norml_normn}
In the case of $\V = \SET$, we recover from \bref{def:normal_normalization} 1 the notion of \textit{normal monoidal functor} employed in \cite[\S 2]{Ke:Doctr}.  The notion of $\V$-normalization for $\V = \SET$ coincides (up to a bijection) with the notion of \textit{normalization} of a monoidal category given in \cite{EiKe}, which there is defined as an ordinary functor $G:\M \rightarrow \SET$ equipped with a \textit{specified} isomorphism $\M(I,-) \cong G$ (which, upon transporting the monoidal structure on the latter functor to $G$, coincides with $\theta^G$).
\end{RemSub}

\begin{PropSub}\label{thm:vnormality_inv_iso}
If $G,H:\M \rightarrow \N$ are isomorphic monoidal $\V$-functors and $G$ is $\V$-normal, then $H$ is $\V$-normal.
\end{PropSub}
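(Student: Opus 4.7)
The plan is to exploit the uniqueness clause of Proposition \bref{thm:cmp_uniq_mon_trans}, which says that $\theta^S:U^{\M}\to U^{\N}S$ is the \emph{unique} monoidal $\V$-natural transformation with that source and target. Accordingly, if $\alpha:G\xrightarrow{\sim} H$ is the given monoidal $\V$-natural isomorphism, I will first manufacture a \emph{second} monoidal $\V$-natural transformation $U^{\M}\to U^{\N}H$ out of $\theta^G$ and $\alpha$, then invoke uniqueness to conclude it equals $\theta^H$, and finally read off that $\theta^H$ is invertible.

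In detail, the key ingredient is that whiskering a monoidal $\V$-natural transformation by the monoidal $\V$-functor $U^{\N}$ (provided by \bref{thm:can_normn_mon}) again yields a monoidal $\V$-natural transformation. Applied to the monoidal isomorphism $\alpha:G\Rightarrow H$, this produces a monoidal $\V$-natural isomorphism $U^{\N}\alpha:U^{\N}G\xrightarrow{\sim} U^{\N}H$. Vertically composing with $\theta^G$ gives
$$(U^{\N}\alpha)\cdot\theta^G\;:\;U^{\M}\longrightarrow U^{\N}H,$$
which is monoidal since vertical composites of monoidal $\V$-natural transformations are monoidal.

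Now $\theta^H:U^{\M}\to U^{\N}H$ is also monoidal by (the proof of) \bref{thm:cmp_uniq_mon_trans}. By the uniqueness clause of that proposition, the two agree:
$$\theta^H \;=\; (U^{\N}\alpha)\cdot\theta^G.$$
By hypothesis $\theta^G$ is an isomorphism, and $U^{\N}\alpha$ is an isomorphism because $\alpha$ is, so the right-hand side is an isomorphism. Hence $\theta^H$ is an isomorphism, i.e., $H$ is $\V$-normal.

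I do not anticipate a serious obstacle. The only step warranting a line of justification is the stability of monoidality under whiskering and vertical composition of monoidal $\V$-natural transformations, which is a standard fact about the 2-category $\VMCAT = \PsMon(\VCAT)$ introduced in \bref{par:2cats_of_psmons}. Once that is in hand, the proof is entirely a matter of invoking the uniqueness part of \bref{thm:cmp_uniq_mon_trans}.
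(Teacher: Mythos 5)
Your proof is correct and is essentially the paper's own argument: the paper disposes of this proposition in one line by appealing to the uniqueness of $\theta^H:U^{\M}\Rightarrow U^{\N}H$ from \bref{thm:cmp_uniq_mon_trans}, and your construction of $(U^{\N}\alpha)\cdot\theta^G$ as a monoidal $\V$-natural isomorphism that must therefore coincide with $\theta^H$ is exactly the intended filling-in of that appeal.
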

\begin{proof}
This follows from the uniqueness of $\theta^H:U^\M \Rightarrow U^\N H$ \pbref{thm:cmp_uniq_mon_trans}.
\end{proof}

\section{Change of base for symmetric monoidal $\V$-categories}

\begin{ParSub} \label{par:ch_base}
Recall that there is a 2-functor $\eCAT{(-)}:\MCAT \rightarrow \TWOCAT$ (\cite{EiKe,Cr}) from the 2-category of monoidal categories to the 2-category of 2-categories, sending each monoidal functor $G:\V \rightarrow \W$ to the \textit{change-of-base} 2-functor $G_\ast:\VCAT \rightarrow \WCAT$.  For a $\V$-category $\A$, the $\W$-category $G_\ast\A$ has objects those of $\A$ and homs given by
$$(G_\ast\A)(A,A') = G\A(A,A')\;\;\;\;(A,A' \in \A)\;.$$
Given a $\V$-functor $P:\A \rightarrow \B$, the $\W$-functor $G_\ast(P):G_\ast\A \rightarrow G_\ast\B$ is given on objects as $P$ and homs as $G(P_{AA'}):G\A(A,A') \rightarrow G\B(PA,PA')$.  Given a 2-cell $\psi:P \Rightarrow Q:\A \rightarrow \B$ in $\VCAT$, the components of $G_\ast\psi:G_\ast(P) \Rightarrow G_\ast(Q)$ are the arrows $(G_\ast\psi)_A:PA \rightarrow QA$ in $G_\ast\B$ with names
$$I_\W \xrightarrow{e^G} GI_\V \xrightarrow{G[\psi_A]} G\B(PA,QA)\;\;\;\;\;(A \in \A)\;.$$
Given a monoidal transformation $\phi:G \Rightarrow H$, where $G,H:\V \rightarrow \W$ are monoidal functors, we obtain an associated 2-natural transformation $\phi_\ast:G_\ast \Rightarrow H_\ast$ whose components are identity-on-objects $\W$-functors
$$\phi_\ast\A:G_\ast\A \rightarrow H_\ast\A\;,\;\;\;\;\A \in \VCAT$$
whose structure morphisms are simply the components
$$(\phi_\ast\A)_{A A'} = \phi_{\A(A,A')}:G\A(A,A') \rightarrow H\A(A,A')\;\;\;\;\;\;(A,A' \in \A)$$
of $\phi$.
\end{ParSub}

\begin{ExaSub}\label{exa:underlying_ordinary}
Given a monoidal category $\V$, the canonical normalization $U^\V = \V(I,-):\V \rightarrow \SET$ determines a change-of-base 2-functor $(-)_0 = U^\V_\ast:\VCAT \rightarrow \CAT$ sending each $\V$-category $\A$ to its underlying ordinary category $\A_0$.
\end{ExaSub}

Cruttwell \cite{Cr} has proved that the change-of-base 2-functor associated to a braided monoidal functor is a monoidal 2-functor\footnote{Indeed, the proof of \cite[5.7.1]{Cr} shows (as stated in the introduction to Ch. 5 there) that the change-of-base 2-functor $N_*$ is (not only a weak monoidal homomorphism but in fact) a monoidal 2-functor.}; further, we have the following.

\begin{PropSub}\label{ch_base_val_in_sm2cat}\emptybox
\begin{enumerate}
\item Given a symmetric monoidal functor $G:\V \rightarrow \W$, the change-of-base 2-functor $G_\ast:\VCAT \rightarrow \WCAT$ carries the structure of a symmetric monoidal 2-functor.
\item The 2-functor $\eCAT{(-)}:\MCAT \rightarrow \TWOCAT$ lifts to a 2-functor
$$\eCAT{(-)}:\SMCAT \rightarrow \SMTWOCAT,\;\;\V \mapsto \VCAT$$
valued in the 2-category $\SMTWOCAT$ of symmetric monoidal 2-categories.
\end{enumerate}
\end{PropSub}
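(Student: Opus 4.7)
The starting point is Cruttwell's theorem cited above, which establishes that for a braided (in particular, symmetric) monoidal functor $G:\V\to\W$, the change-of-base 2-functor $G_\ast:\VCAT\to\WCAT$ carries the structure of a monoidal 2-functor. Granting this, the strategy for (1) is to verify the single additional axiom expressing the compatibility of the monoidal structure 2-cells $e^{G_\ast}$, $m^{G_\ast}$ with the symmetries carried by the symmetric monoidal 2-categories $\VCAT$ and $\WCAT$. The strategy for (2) is then to verify that the assignment $G \mapsto G_\ast$ is 2-functorial in a way that respects this extra structure on objects, 1-cells, and 2-cells.

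\textbf{Step 1 (structure 1-cells).} I would first make explicit the monoidal structure supplied by Cruttwell's construction. The 1-cell $m^{G_\ast}_{\A,\B}:(G_\ast\A)\otimes(G_\ast\B)\to G_\ast(\A\otimes\B)$ is the identity on objects, and on each hom-object it is the component $m^G_{\A(A,A'),\B(B,B')}$ of $G$'s monoidal structure; similarly $e^{G_\ast}:\bI_\W\to G_\ast\bI_\V$ is the identity on the unique object, with sole hom component $e^G:I_\W\to G I_\V$. Recall that the symmetry $\sigma_{\A,\B}:\A\otimes\B\to\B\otimes\A$ of $\VCAT$ is the $\V$-functor swapping coordinates on objects and acting on homs by the symmetry of $\V$, and likewise for $\WCAT$.

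\textbf{Step 2 (the symmetry axiom).} The axiom required to upgrade Cruttwell's monoidal 2-functor to a symmetric monoidal 2-functor is the equation
\[
G_\ast(\sigma^\V_{\A,\B})\cdot m^{G_\ast}_{\A,\B} \;=\; m^{G_\ast}_{\B,\A}\cdot \sigma^\W_{G_\ast\A,G_\ast\B}
\]
of $\W$-functors $(G_\ast\A)\otimes(G_\ast\B)\to G_\ast(\B\otimes\A)$. Both sides act as the coordinate swap on objects; on each hom-object both sides yield a composite of the symmetry $s^\W$ with an application of $m^G$, and the equality of these composites is precisely the symmetric-monoidal axiom $G(s^\V)\cdot m^G = m^G\cdot s^\W$ on $G$, applied to the hom-objects of $\A$ and $\B$. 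This is the main (and essentially only substantive) verification in the proof.

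\textbf{Step 3 (lift to $\SMTWOCAT$).} For (2), the lift of $\eCAT{(-)}$ assembles from three ingredients: (a) the standard fact that $\VCAT$ is a symmetric monoidal 2-category for symmetric monoidal $\V$ \cite[\S 1.4]{Ke:Ba}, which provides the action on objects; (b) part (1), which provides the action on 1-cells; and (c) the fact that for a monoidal transformation $\phi:G\Rightarrow H$ in $\SMCAT$ the 2-natural transformation $\phi_\ast:G_\ast\Rightarrow H_\ast$ of \bref{par:ch_base} is monoidal, which provides the action on 2-cells. For (c), the components $\phi_\ast\A$ are identity-on-objects $\W$-functors with hom components $\phi_{\A(A,A')}$, so the monoidality equations for $\phi_\ast$ (compatibility with $m^{G_\ast},e^{G_\ast}$ and $m^{H_\ast},e^{H_\ast}$) reduce component-wise to the monoidality equations for $\phi$ itself.

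\textbf{Step 4 (2-functoriality).} Preservation of composition and identities for $G\mapsto G_\ast$ at the underlying level is already known from the 2-functor $\eCAT{(-)}:\MCAT\to\TWOCAT$. At the enhanced level of $\SMTWOCAT$ this is automatic, because the comparison 1-cells involved (namely $m^{(G\circ F)_\ast}$ and $m^{G_\ast}\cdot G_\ast m^{F_\ast}$, etc.) are built hom-wise from the corresponding comparison morphisms of monoidal functors, whose agreement is part of the 2-functoriality of $\MCAT\to\MTWOCAT$ that underlies Cruttwell's theorem. No new coherence has to be checked.

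\textbf{Main obstacle.} The conceptual work is entirely concentrated in Step 2: everything else is bookkeeping on top of Cruttwell's construction. The only potential friction is pedantic, namely to ensure that the version of Cruttwell's theorem one cites supplies $m^{G_\ast}$ and $e^{G_\ast}$ in the explicit hom-wise form needed to reduce the symmetry axiom for $G_\ast$ to the symmetry axiom for $G$, and similarly to reduce the monoidality of $\phi_\ast$ to that of $\phi$.
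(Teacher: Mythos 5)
Your proposal is correct and follows essentially the same route as the paper: it cites Cruttwell for the monoidal 2-functor structure, describes $m^{G_*}$ and $e^{G_*}$ hom-wise via $m^G$ and $e^G$, reduces the symmetry axiom for $G_*$ to that of $G$ on hom-objects, and obtains the lift to $\SMTWOCAT$ by checking compatibility with composition of 1-cells and the monoidality of $\phi_*$ componentwise. The only difference is one of emphasis: the paper states these reductions briefly (``follows immediately''), while you spell out the symmetry equation explicitly, which is fine.
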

\begin{proof}
1.  By the preceding comment, $G_\ast$ is a monoidal 2-functor.  Explicitly, for each pair $\A,\B$ of $\V$-categories, there is an identity-on-objects $\W$-functor
$$G_\ast\A \otimes G_\ast\B \rightarrow G_\ast(\A \otimes \B)$$
given on homs as just the components
$$m^G:G\A(A,A') \otimes G\B(B,B') \rightarrow G(\A(A,A')\otimes\B(B,B'))$$
($A,A' \in \A$, $B,B' \in \B$) of the structure morphism $m^G$ carried by $G$.  This 2-natural family of $\W$-functors, together with the identity-on-objects $\W$-functor $\bI_\W \rightarrow G_\ast\bI_\V$ determined by $e^G:I_\W \rightarrow GI_\V$, constitute the needed structure on $G_\ast$.  The symmetry of $G_\ast$ follows immediately from that of $G$.

2.  We know that $\eCAT{(-)}:\SMCAT \rightarrow \TWOCAT$ is 2-functorial, and we immediately verify that the assignment of monoidal structures on $1$-cells given by 1 respects composition, so that we obtain a functor $\eCAT{(-)}:\SMCAT \rightarrow \SMTWOCAT$.  This is moreover a 2-functor, since given a 2-cell $\psi:G \Rightarrow H:\V \rightarrow \W$ in $\SMCAT$, it follows immediately from the monoidality of $\psi$ that $\psi_\ast:G_\ast \Rightarrow H_\ast$ is monoidal.
\end{proof}

\begin{ThmSub}\label{thm:ch_base_vsmcats}\emptybox
\begin{enumerate}
\item There is a 2-functor
$$\eSMCAT{(-)}:\SMCAT \rightarrow \TWOCAT,\;\;\V \mapsto \VSMCAT$$
sending each symmetric monoidal category $\V$ to the 2-category $\VSMCAT$ of \linebreak[4]\mbox{symmetric monoidal} $\V$-categories.
\item Given a 2-cell $\phi:G \Rightarrow H:\V \rightarrow \W$ in $\SMCAT$ and a symmetric monoidal $\V$-category $\M$, the identity-on-objects $\W$-functor \mbox{$\phi_\ast\M:G_\ast\M \rightarrow H_\ast\M$} is strict symmetric $\W$-monoidal.
\end{enumerate}
\end{ThmSub}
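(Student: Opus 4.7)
The plan is to obtain both statements by assembling two 2-functors already constructed in the paper. From Proposition \ref{ch_base_val_in_sm2cat}(2) we have a 2-functor
\[
\eCAT{(-)}:\SMCAT \longrightarrow \SMTWOCAT,\;\; \V \mapsto \VCAT\;,
\]
and from Theorem \ref{twofunc_psmon}(1) we have a 2-functor
\[
\SymPsMon:\SMTWOCAT \longrightarrow \TWOCAT\;.
\]
Since $\SymPsMon(\VCAT) = \VSMCAT$ by the identification recorded at the end of \bref{par:2cats_of_psmons}, the composite $\SymPsMon \circ \eCAT{(-)}$ sends $\V$ to $\VSMCAT$ and thereby serves as the 2-functor $\eSMCAT{(-)}$ required for part 1. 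This immediately supplies the 2-functoriality, with no further computation.

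For part 2, the $\W$-functor $\phi_\ast\M$ is obtained as the $\M$-component of the 2-natural transformation $\SymPsMon(\phi_\ast):\SymPsMon(G_\ast) \Rightarrow \SymPsMon(H_\ast)$. I would first observe that $\phi_\ast:G_\ast \Rightarrow H_\ast$ is a \emph{monoidal} (indeed, by the same immediate check, symmetric monoidal) 2-natural transformation between symmetric monoidal 2-functors $\VCAT \rightarrow \WCAT$; this is essentially the content of the last line of the proof of \bref{ch_base_val_in_sm2cat}, and what makes $\phi_\ast$ a 2-cell of $\SMTWOCAT$ in the first place. Then Theorem \ref{twofunc_psmon}(2), applied in the symmetric case (as discussed in the proof there), yields that each component $\phi_\ast\M$ is a strict symmetric monoidal 1-cell in $\WCAT$, i.e., a strict symmetric monoidal $\W$-functor $G_\ast\M \rightarrow H_\ast\M$. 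This is the statement of part 2.

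There is no serious obstacle: the genuinely substantive work, namely the construction of the 2-functor $\SymPsMon$ and the verification that components of a monoidal 2-natural transformation are strict monoidal 1-cells, has been carried out in Section~4. The only things to verify here are the bookkeeping identifications $\SymPsMon(\VCAT) = \VSMCAT$ and the observation that $\phi_\ast$, as constructed in \bref{par:ch_base}, is monoidal (in the symmetric sense) as a 2-cell between symmetric monoidal 2-functors, both of which are straightforward from the definitions.
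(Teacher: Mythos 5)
Your proposal is correct and follows essentially the same route as the paper, which likewise obtains the 2-functor as the composite $\SMCAT \xrightarrow{\eCAT{(-)}} \SMTWOCAT \xrightarrow{\SymPsMon} \TWOCAT$ using \bref{ch_base_val_in_sm2cat} and \bref{twofunc_psmon}, with part 2 coming from \bref{twofunc_psmon}(2) in its symmetric form. Your explicit remarks on the identification $\SymPsMon(\VCAT)=\VSMCAT$ and on $\phi_\ast$ being a symmetric monoidal 2-natural transformation are exactly the bookkeeping the paper leaves implicit.
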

\begin{proof}
This follows from \bref{ch_base_val_in_sm2cat} and \bref{twofunc_psmon}, since the needed 2-functor is obtained as the composite
$$\SMCAT \xrightarrow{\eCAT{(-)}} \SMTWOCAT \xrightarrow{\SymPsMon} \TWOCAT\;.$$
\end{proof}

\begin{RemSub}\label{par:ch_base_2func_on_smvcats}
By \bref{thm:ch_base_vsmcats}, each 1-cell $G:\V \rightarrow \W$ in $\SMCAT$ determines a 2-functor
$$G_*:\VSMCAT \rightarrow \eSMCAT{\W}$$
which we again write as $G_*$ by abuse of notation.  Recall that there is a unique 2-cell $\theta^G:U^\V \Rightarrow U^\W G:\V \rightarrow \SET$ \pbref{thm:cmp_uniq_mon_trans} in $\SMCAT$, where $U^\V:\V \rightarrow \SET$ and $U^\W:\W \rightarrow \SET$ are the canonical normalizations.  By \bref{thm:ch_base_vsmcats}, each symmetric monoidal $\V$-category $\M$ therefore determines an identity-on-objects strict symmetric monoidal functor
\begin{equation}\label{eq:can_comp}\theta^G_*\M:\M_0 = U^\V_*\M \rightarrow U^\W_*G_*\M = (G_*\M)_0\end{equation}
which serves as a canonical comparison of the underlying ordinary symmetric monoidal categories.  We shall later make use of the observation that, given a 1-cell $S:\M \rightarrow \N$ in $\VSMCAT$, the monoidal structure morphisms carried by the symmetric monoidal $\W$-functor $G_*(S):G_*\M \rightarrow G_*\N$ are obtained from those of $S$ by applying $\theta^G_*\N:\N_0 \rightarrow (G_*\N)_0$.  Indeed, this follows from the fact that $\theta^G_*:U^\V_* \Rightarrow U^\W_*G_*:\VSMCAT \rightarrow \SMCAT$ is natural and consists of strict monoidal functors.  Also, given a 2-cell $\phi:S \Rightarrow T:\M \rightarrow \N$ in $\VSMCAT$, the components of $G_*(\phi):G_*(S) \Rightarrow G_*(T)$ are obtained from those of $\phi$ by applying that same comparison functor $\theta^G_*\N$.
\end{RemSub}

\section{Symmetric monoidal closed $\V$-categories and change of base}

\begin{DefSub}\label{def:vsmccat}
Let $\V$ be symmetric monoidal closed category.  A \textit{symmetric monoidal closed $\V$-category} is a symmetric monoidal $\V$-category $\M$ such that for each object $M \in \M$, the $\V$-functor $M \otimes (-):\M \rightarrow \M$ has a right $\V$-adjoint, denoted by $\uM(M,-)$.  We denote by $\VSMCCAT$ the full sub-2-category of $\VSMCAT$ with objects all symmetric monoidal closed $\V$-categories.
\end{DefSub}

\begin{ExaSub}\label{v_is_a_vsmc_vcat}
Given a symmetric monoidal closed category $\V$, we obtain a symmetric monoidal closed $\V$-category $\uV$ by \bref{exa:v_is_a_sm_vcat}, since the adjunctions $V \otimes (-) \dashv \uV(V,-)$ associated to the objects $V$ of $\V$ are $\V$-adjunctions.
\end{ExaSub}

\begin{ThmSub}\label{thm:ch_base_vsmccats}
Let $G:\V \rightarrow \W$ be a 1-cell in $\SMCCAT$.
\begin{enumerate}
\item Given a symmetric monoidal closed $\V$-category $\M$, the associated symmetric monoidal $\W$-category $G_*\M$ is $\W$-symmetric-monoidal-closed.
\item The change-of-base 2-functor $G_*:\VSMCAT \rightarrow \WSMCAT$ lifts to a 2-functor
$$G_*:\VSMCCAT \rightarrow \WSMCCAT\;,$$
which we denote also by $G_*$, by abuse of notation.
\item There is a 2-functor
$$\eSMCCAT{(-)}:\SMCCAT \rightarrow \TWOCAT,\;\;\;\;\V \mapsto \VSMCCAT\;.$$
\end{enumerate}
\end{ThmSub}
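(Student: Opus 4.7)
The plan is to prove (1) directly, after which (2) and (3) will follow almost formally, since by Definition \ref{def:vsmccat} each $\VSMCCAT$ is a full sub-2-category of $\VSMCAT$. Granting (1), the 2-functor $G_*:\VSMCAT \to \WSMCAT$ from Theorem \ref{thm:ch_base_vsmcats} sends objects of $\VSMCCAT$ into $\WSMCCAT$, and so restricts to a 2-functor $\VSMCCAT \to \WSMCCAT$, giving (2). For (3), I would restrict $\eSMCAT{(-)}:\SMCAT \to \TWOCAT$ of Theorem \ref{thm:ch_base_vsmcats} along $\SMCCAT \hookrightarrow \SMCAT$ and corestrict values along the full inclusions $\VSMCCAT \hookrightarrow \VSMCAT$; this corestriction is well-defined on 1-cells by (1) and (2), and is automatic on 2-cells by fullness of the inclusions. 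Thus the crux lies entirely in (1).

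For (1), fix $M \in \M$ and write $T_M := M \otimes (-):\M \to \M$ for the corresponding $\V$-functor. Since $\M$ is symmetric monoidal closed over $\V$, there is a $\V$-adjunction $T_M \dashv \uM(M,-)$ in $\VCAT$. The 2-functor $G_*:\VCAT \to \WCAT$ then carries this to a $\W$-adjunction $G_*(T_M) \dashv G_*(\uM(M,-))$ in $\WCAT$. It therefore suffices to identify $G_*(T_M)$ with the $\W$-functor $T_M^{G_*\M}:G_*\M \to G_*\M$ obtained by partial application from the symmetric monoidal structure on $G_*\M$; once established, $G_*(\uM(M,-))$ will serve as the required right $\W$-adjoint, playing the role of $\underline{G_*\M}(M,-)$ and exhibiting $G_*\M$ as $\W$-symmetric-monoidal-closed.

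To verify the identification $T_M^{G_*\M} = G_*(T_M)$, I would unwind the explicit description of the tensor product on $G_*\M$ supplied by Proposition \ref{ch_base_val_in_sm2cat} and Remark \ref{rem:desc_psmon_g}: the two-variable tensor on $G_*\M$ is the composite $G_*(\otimes_\M) \circ \mathsf{M}(G_*,\M)$, where $\mathsf{M}(G_*,\M):G_*\M \otimes G_*\M \to G_*(\M \otimes \M)$ is identity-on-objects and built on homs from $m^G$. Both $T_M^{G_*\M}$ and $G_*(T_M)$ send $N \mapsto M \otimes N$ on objects. Writing out the recipe for partial application of a 2-variable $\W$-functor and unfolding the name of $1_M$ in $G_*\M$ as $G(j_M) \cdot e^G$, the equality of their hom actions $G\M(N,N') \to G\M(M \otimes N, M \otimes N')$ reduces to the single equation $G(\lambda^{-1}) = m^G \cdot (e^G \otimes 1) \cdot \lambda^{-1}$ applied to $\M(N,N')$, which is precisely the unit axiom for the monoidal functor $G$.

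The main obstacle is this identification $T_M^{G_*\M} = G_*(T_M)$; while a direct computation, it requires careful bookkeeping of the canonical comparison $\mathsf{M}(G_*,\M)$ and of the unit data in $G_*\M$ against the corresponding $\V$-structure in $\M$. Once this is in hand, every other aspect of the theorem is purely formal, appealing only to the 2-functoriality of change of base together with the fullness of the inclusions $\VSMCCAT \hookrightarrow \VSMCAT$.
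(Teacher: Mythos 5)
Your proposal is correct and follows essentially the same route as the paper: prove (1) by applying the 2-functor $G_*:\VCAT \rightarrow \WCAT$ to the $\V$-adjunction $M \otimes (-) \dashv \uM(M,-)$ and then identifying $G_*(M \otimes (-))$ with the partial tensor $M \boxtimes (-)$ on $G_*\M$ via a hom-level computation (the paper's diagram chase uses the definition of $M \otimes (-)$, the naturality of $m^G$, and the unit axiom for $G$ --- note the naturality of $m^G$ is needed alongside the unit equation you single out), with (2) and (3) then following formally exactly as you say.
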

\begin{proof}
It suffices to prove 1, from which 2 and 3 follow.  Denote by $\otimes$ and $\boxtimes$ the tensor products carried by $\M$ and $G_*\M$, respectively.  For each $M \in \M$, we have a $\V$-adjunction $M \otimes (-) \dashv \uM(M,-)$, and, applying the 2-functor $G_*:\VCAT \rightarrow \WCAT$, we obtain a $\W$-adjunction.  Hence, it suffices to show that $G_*(M \otimes (-)) = M \boxtimes (-):G_*\M \rightarrow G_*\M$, as $\W$-functors.  The tensor product $\boxtimes$ is given on object just as $\otimes$, so on objects, the needed equation is immediate.  But on homs, $\boxtimes$ is given by the composites
$$
\renewcommand{\objectstyle}{\scriptstyle}
\renewcommand{\labelstyle}{\scriptstyle}
\xymatrix@C=7ex @R=1ex{
G\M(M,M') \otimes G\M(N,N') \ar[r]^{m^G} & G(\M(M,M') \otimes \M(N,N')) \ar[r]^{G(\otimes)} & G\M(M \otimes N, M' \otimes N')
}
$$
for all $M,M',N,N' \in \M$.  Therefore, the morphism $(M \boxtimes (-))_{N,N'}:G\M(N,N') \rightarrow G\M(M \otimes N, M \otimes N')$ associated with each pair $N, N' \in \M$ is, by definition, obtained as the counter-clockwise composite around the periphery of the following diagram
$$
\xymatrix{
G\M(N,N') \ar[dd]_{\ell^{-1}} \ar[dr]|{G\ell^{-1}} \ar[rr]^{G((M \otimes (-))_{N N'})} & & G\M(M \otimes N, M \otimes N')\\
& G(I \otimes \M(N,N')) \ar[r]^(.43){G([1] \otimes 1)} & G(\M(M,M) \otimes \M(N,N')) \ar[u]_{G(\otimes_{(M,N)(M,N')})}\\
I \otimes G\M(N,N') \ar[r]_{e^G \otimes 1} & GI \otimes G\M(N,N') \ar[u]_{m^G} \ar[r]_(.4){G[1] \otimes 1} & G\M(M,M) \otimes G\M(N,N') \ar[u]_{m^G}
}
$$
which commutes, using the definition of $M \otimes (-)$, the naturality of $m^G$, and the fact that $G$ is a monoidal functor.
\end{proof}

\begin{PropSub}\label{thm:can_normns_commute_strictly}
Let $\M$ be a monoidal $\V$-category.  Then the diagram of symmetric monoidal functors
$$
\xymatrix{
\M_0 \ar[dr]_{U^{\M_0}} \ar[rr]^{U^\M_0} &      & \V \ar[dl]^{U^\V}\\
                                       & \SET & 
}
$$
commutes (strictly), where $U^\M$ is the canonical $\V$-normalization of $\M$, and $U^{\M_0}$, $U^\V$ are the canonical normalizations of $\M_0$ and $\V$, respectively.  Equivalently, the unique monoidal transformation $\theta^{U^\M}:U^{\M_0} \rightarrow U^\V U^\M_0$ \pbref{thm:cmp_uniq_mon_trans} is an identity morphism.
\end{PropSub}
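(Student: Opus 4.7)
The plan is to establish the equivalent formulation: to show that the unique monoidal natural transformation $\theta^{U^\M}:U^{\M_0} \Rightarrow U^\V U^\M_0$ guaranteed by Proposition~\bref{thm:cmp_uniq_mon_trans} is the identity. Once it is pointwise the identity, its monoidality (furnished by the same proposition) forces the unit and multiplication morphisms on $U^{\M_0}$ and $U^\V U^\M_0$ to coincide, yielding the asserted strict equality of (symmetric) monoidal functors $\M_0 \to \SET$.

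First I would verify that $U^{\M_0}$ and $U^\V U^\M_0$ have literally the same underlying ordinary functor $\M_0 \to \SET$, so that the identity transformation between them is well-defined. On objects this is tautological given the paper's convention $\M_0(A,A') = \V(I,\M(A,A'))$, since both composites send $M$ to $\V(I,\M(I,M)) = \M_0(I,M)$. On morphisms $g:M \to M'$ in $\M_0$, the underlying ordinary functor $U^\M_0$ of the $\V$-functor $\M(I,-)$ sends $g$ to the $\V$-morphism $\M(I,g):\M(I,M)\to \M(I,M')$ that implements enriched post-composition, and then $U^\V$ sends $[h]$ to $\M(I,g)\circ [h] = [g\circ h]$, matching $U^{\M_0}(g)$.

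Second I would evaluate $\theta^{U^\M}_M$ explicitly using Definition~\bref{def:can_normn_1}: it sends $g \in \M_0(I,M)$ to $U^\M_0(g) \circ e^{U^\M_0}: I \to \M(I,M)$ in $\V$. By Proposition~\bref{thm:can_normn_mon} the unit $e^{U^\M_0}$ is just $[1_I]$, so, using the description of $U^\M_0$ on morphisms recalled above, this composite equals $[g\circ 1_I] = [g]$, i.e.\ $g$ itself under the identification $\M_0(I,M) = \V(I,\M(I,M))$. Hence $\theta^{U^\M}_M$ is the identity function on $\M_0(I,M)$ for every $M$, so $\theta^{U^\M}$ is the identity natural transformation, and its monoidality then forces the monoidal structures on the two composites to agree.

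The main obstacle is the bookkeeping in the first step: carefully distinguishing arrows in $\V$ from the $\M_0$-morphisms they name, and verifying the ``obvious'' identity $\M(I,g)\circ [h] = [g\circ h]$, which is the compatibility of the $\V$-functor $\M(I,-)$ with ordinary composition in $\M_0$. Once this is pinned down, both the matching of underlying functors and the evaluation of $\theta^{U^\M}$ reduce to essentially one-line calculations.
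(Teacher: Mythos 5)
Your proposal is correct and follows essentially the same route as the paper: both reduce the claim to showing that each component $\theta^{U^\M}_M$ is the identity by unwinding Definition \bref{def:can_normn_1} with unit $e^{U^\M}=[1_J]$ and using the fact that the underlying functor of $\M(J,-)$ acts by enriched post-composition, so that $\M(J,f)\circ[1_J]=[f]$. Your extra preliminary check that $U^{\M_0}$ and $U^\V U^\M_0$ agree on morphisms, and your remark that monoidality of $\theta^{U^\M}$ then forces the monoidal structures to coincide, are harmless elaborations of what the paper leaves implicit.
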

\begin{proof}
Denote the unit objects of $\V$ and $\M$ by $I$ and $J$, respectively.  Then for each object $M \in \M$,
$$U^\V U^\M M = \V(I,\M(J,M)) = \M_0(J,M) = U^{\M_0}M\;,$$
so the diagram commutes on objects.  It suffices to show that $\theta^{U^\M}_M:U^{\M_0}M \rightarrow U^\V U^\M M$ is the identity map.  Since by definition $e^{U^\M}_M = [1_J]:I \rightarrow U^\M J = \M(J,J)$, we find that $\theta^{U^\M}_M$ is, by definition, the composite
$$\M_0(J,M) \xrightarrow{U^\M_{JM}} \V(\M(J,J),\M(J,M)) \xrightarrow{\V([1_J],1)}\V(I,\M(J,M)) = \M_0(J,M)\;.$$
Since $U^\M = \M(J,-)$, this composite sends each morphism $f:J \rightarrow M$ in $\M_0$ to the composite
$$I \xrightarrow{[1_J]} \M(J,J) \xrightarrow{\M(J,f)} \M(J,M)\;,$$
which, viewed as morphism $J \rightarrow M$ in $\M_0$, is exactly $f$.
\end{proof}

The following theorem shows that any symmetric monoidal closed $\V$-category $\M$ can be recovered (up to isomorphism) from its underlying symmetric monoidal closed category $\M_0$ together with the monoidal functor $U^\M_0 = \M(I,-):\M_0 \rightarrow \V$, which we shall also write as simply $U^\M$, by abuse of notation.  We denote by $\uM$ the $\M_0$-enriched category $\underline{\M_0}$.

\begin{ThmSub}\label{thm:rec_vsmcc_from_ordinary_data}
Let $\M$ be a symmetric monoidal closed $\V$-category.  Then there is an isomorphism of symmetric monoidal closed $\V$-categories
$$U^\M_*\uM \cong \M\;,$$
and this isomorphism is identity-on-objects and strict symmetric $\V$-monoidal.  \end{ThmSub}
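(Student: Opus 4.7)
The plan is to construct an explicit identity-on-objects $\V$-functor $\Phi \colon \M \to U^\M_*\uM$ whose hom-components are isomorphisms and which is moreover strict symmetric $\V$-monoidal; the preservation of the closed structure will then be automatic.

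Writing $I$ for the unit object of $\M$, take $\Phi$ to be the identity on objects, and on hom-objects let $\Phi_{M,N} \colon \M(M,N) \to (U^\M_*\uM)(M,N) = \M(I, \uM(M,N))$ be the composite of $\M(r_M, N) \colon \M(M,N) \to \M(M \otimes I, N)$ with the $\V$-adjunction transpose isomorphism $\M(M \otimes I, N) \xrightarrow{\sim} \M(I, \uM(M, N))$ induced by $M \otimes (-) \dashv \uM(M,-)$. Each $\Phi_{M,N}$ is an isomorphism in $\V$; its $\V$-naturality in both variables follows from that of the adjunction transpose and of the right unitor. A diagram chase invoking the universal property of the adjunction then shows that $\Phi$ preserves $\V$-enriched composition and identities, so is a $\V$-functor.

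The main substantive step is to verify that $\Phi$ is strict symmetric $\V$-monoidal. Unwinding the constructions in \bref{thm:ch_base_vsmccats} and \bref{par:ch_base_2func_on_smvcats}, the hom-component of the tensor $\V$-functor of $U^\M_*\uM$ at $(M, M'), (N, N')$ is the composite
$$\M(I, \uM(M, M')) \otimes \M(I, \uM(N, N')) \xrightarrow{m^{U^\M}} \M(I, \uM(M, M') \otimes \uM(N, N')) \xrightarrow{\M(I,\otimes_{\uM})} \M(I, \uM(M \otimes N, M' \otimes N')),$$
where $m^{U^\M} = \M(\ell_I^{-1}, 1) \cdot \otimes$ is the monoidal structure of the canonical normalization \pbref{thm:can_normn_mon} and $\otimes_{\uM}$ is the tensor on hom-objects of the autoenrichment, described as in \bref{exa:v_is_a_sm_vcat}. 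A diagram chase combining the explicit formula for $\otimes_{\uM}$ via double evaluation and the symmetry of $\M$, the definition of the adjunction transpose, and the coherence of the unit laws in $\M$ shows that under the isomorphisms $\Phi_{M,M'} \otimes \Phi_{N,N'}$ and $\Phi_{M \otimes N, M' \otimes N'}$ this composite is identified with the tensor-hom morphism of $\M$; hence the associated monoidal structure 2-cells $m^\Phi$ and $e^\Phi$ are identities. The analogous but simpler verifications for the unit and for the symmetry complete the claim of strict symmetric $\V$-monoidality.

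Since $\Phi$ is an identity-on-objects strict symmetric monoidal $\V$-functor with hom-components isomorphisms, it is an isomorphism in $\VSMCAT$. As a strict symmetric monoidal $\V$-isomorphism transports right $\V$-adjoints to right $\V$-adjoints, and the closed structure on each side is determined by the right $\V$-adjoints to tensoring, $\Phi$ is also an isomorphism in $\VSMCCAT$. The main obstacle is the diagram chase establishing strict monoidality, which must carefully interleave the explicit formulas for the canonical normalization's monoidal structure, for the autoenrichment's tensor product on hom-objects, and for the change-of-base tensor product on $U^\M_*\uM$.
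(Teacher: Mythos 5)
Your candidate isomorphism is exactly the one the paper constructs: the hom-components you define are the canonical maps \pbref{eq:can_isos_k}, and reducing the closed structure to the monoidal isomorphism is also how the paper concludes (indeed, since $\VSMCCAT$ is a full sub-2-category of $\VSMCAT$, no transport of right $\V$-adjoints even needs to be argued). The difference in route is that where you assert diagram chases, the paper has structural lemmas: the $\V$-functoriality of your $\Phi$ is obtained there via the Yoneda lemma applied to a ``superposed'' $\V$-category \pbref{thm:vfunc_ind_by_superposed_vcat}, and the on-the-nose commutation of $\Phi$ with the tensor $\V$-functors is Lemma \bref{thm:mon_superp}. Those chases are true and could be carried out directly, so this part is a matter of detail rather than of correctness.

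There is, however, a genuine gap in your strict-monoidality step. Being a strict symmetric monoidal $\V$-functor is not just the statement that $m^\Phi$ and $e^\Phi$ are identities (i.e.\ that $\Phi$ commutes strictly with the tensor and unit $\V$-functors) together with compatibility with the symmetry: a monoidal 1-cell between pseudomonoids must satisfy the three coherence equations, which in the strict case say precisely that $\Phi$ carries the associativity and unit constraints $a,\ell,r$ of $\M$ to the corresponding constraints of $U^\M_*\uM$ (and similarly $s$ in the symmetric case). You check the symmetry but never address $a,\ell,r$; moreover, you nowhere identify what the constraints of $U^\M_*\uM$ actually are. They are produced by the change-of-base 2-functor, i.e.\ by applying $U^\M$ to the constraints of $\uM$ and composing with the canonical comparison 1-cells \pbref{rem:desc_psmon_g}, equivalently by applying the comparison functor of \bref{par:ch_base_2func_on_smvcats}, and recognizing them as the constraints of $\M$ is exactly where the paper invokes Proposition \bref{thm:can_normns_commute_strictly}: the comparison $\theta^{U^\M}_*:(\uM)_0 \rightarrow (U^\M_*\uM)_0$ is an identity, so the underlying ordinary symmetric monoidal categories coincide, and then $\Phi_0$ is the familiar strict symmetric monoidal isomorphism $\M_0 \cong (\underline{\M_0})_0$, which settles all the constraint equations at once. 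Some argument of this kind must be added to your proof; as written, the preservation of $a,\ell,r$ (and the identification of the constraints of $U^\M_*\uM$) is not established.
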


In order to prove this theorem, we will make use of the following notion, closely related to the notion of \textit{$\V$-profunctor monad}.

\begin{DefSub}\label{def:superp}
Let $\A$ be a $\V$-category.  A \textit{$\V$-category superposed upon $\A$} consists of a $\V$-functor $\B(-,-):\A^\op\otimes\A \rightarrow \uV$ and families of morphisms
$$\circ_{ABC}:\B(A,B)\otimes\B(B,C) \rightarrow \B(A,C)\;\;\;\;\;\;\;\;j_A:I \rightarrow \B(A,A)$$
extraordinarily $\V$-natural in $A,B,C \in \A$, resp. $A \in \A$, such that
$$\B = (\ob\A,(\B(A,B))_{A,B\in\ob\A},\circ,j)$$
is a $\V$-category.  We then denote by
$$\A(A,B)\otimes\B(B,C) \xrightarrow{\bullet_{A(BC)}} \B(A,C)\;\;\;\;\;\;\B(A,B)\otimes\A(B,C) \xrightarrow{\bullet_{(AB)C}} \B(A,C)$$
the transposes of $\B(-,C)_{AB}$ and $\B(A,-)_{BC}$, respectively.
\end{DefSub}

\begin{PropSub}\label{thm:vfunc_ind_by_superposed_vcat}
Let $\B$ be a $\V$-category superposed upon $\A$.  Then there is an identity-on-objects $\V$-functor $S:\A \rightarrow \B$ given by the following composite morphisms
$$\A(A,B) \xrightarrow{\ell^{-1}} I\otimes\A(A,B)\xrightarrow{j_A\otimes 1} \B(A,A)\otimes\A(A,B) \xrightarrow{\bullet_{(AA)B}} \B(A,B)$$
$(A,B \in \A)$, which by the extraordinary $\V$-naturality of $j$ are equal to
$$\A(A,B) \xrightarrow{r^{-1}} \A(A,B)\otimes I \xrightarrow{1\otimes j_B}\A(A,B)\otimes\B(B,B) \xrightarrow{\bullet_{A(BB)}} \B(A,B)\;.$$
\end{PropSub}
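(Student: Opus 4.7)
The plan is to verify three facts: that the two displayed composite formulas for $S_{AB}$ agree, that $S$ preserves identities, and that $S$ preserves composition. Throughout I write $j^\A$ for the identity data of $\A$ and reserve the unadorned $j$ for that of $\B$.

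The equality of the two displayed composites for $S_{AB}$ is the most immediate step: it is precisely the extraordinary $\V$-naturality square for $j$ at the pair $(A,B)$, once one identifies $\bullet_{(AA)B}$ and $\bullet_{A(BB)}$ as the two transposed actions through which $\B(-,B)$ and $\B(A,-)$ act. For identity preservation, I must show $S_{AA} \cdot j^\A_A = j_A$ as morphisms $I \to \B(A,A)$. Using the first formula, this equals $\bullet_{(AA)A} \cdot (j_A \otimes j^\A_A) \cdot \ell_I^{-1}$; exploiting $\ell_I = r_I$ together with naturality of $r^{-1}$ rewrites it as $\bullet_{(AA)A} \cdot (1 \otimes j^\A_A) \cdot r_{\B(A,A)}^{-1} \cdot j_A$. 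Since the $\V$-functor $\B(A,-):\A \to \uV$ preserves identities, the transpose of that fact is exactly $\bullet_{(AA)A} \cdot (1 \otimes j^\A_A) \cdot r_{\B(A,A)}^{-1} = 1_{\B(A,A)}$, so the composite collapses to $j_A$.

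Preservation of composition is the main obstacle, and I obtain it by combining two transposed identities. First, the $\V$-functoriality of $\B(A,-):\A\to\uV$ on composition transposes to
\[
\bullet_{(AA)C} \cdot \bigl(1_{\B(A,A)} \otimes \circ^{\A}_{ABC}\bigr) \;=\; \bullet_{(AB)C} \cdot \bigl(\bullet_{(AA)B} \otimes 1_{\A(B,C)}\bigr)
\]
on $\B(A,A) \otimes \A(A,B) \otimes \A(B,C)$. Precomposing with $(j_A \otimes 1 \otimes 1)\cdot \ell^{-1}$ and applying naturality of $\ell^{-1}$ yields
\[
S_{AC} \cdot \circ^{\A}_{ABC} \;=\; \bullet_{(AB)C} \cdot (S_{AB} \otimes 1_{\A(B,C)}).
\]
Second, the extraordinary $\V$-naturality of $\circ_{ABC}$ in the middle variable $B \in \A$ gives
\[
\circ_{ACC} \cdot \bigl(\bullet_{(AB)C} \otimes 1_{\B(C,C)}\bigr) \;=\; \circ_{ABC} \cdot \bigl(1_{\B(A,B)} \otimes \bullet_{B(CC)}\bigr)
\]
on $\B(A,B) \otimes \A(B,C) \otimes \B(C,C)$. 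Precomposing with $1 \otimes 1 \otimes j_C$, using the unit law $\circ_{ACC} \cdot (1 \otimes j_C) = r_{\B(A,C)}$ on the left and the second displayed formula for $S_{BC}$ on the right, and cancelling the invertible $r$, one obtains $\bullet_{(AB)C} = \circ_{ABC} \cdot (1_{\B(A,B)} \otimes S_{BC})$. Substituting into the previous display gives $S_{AC} \cdot \circ^{\A}_{ABC} = \circ_{ABC} \cdot (S_{AB} \otimes S_{BC})$, which is the required functoriality.

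The hard part is the second identity: recognising that one can recover the mixed action $\bullet_{(AB)C}$ from $\circ_{ABC}$ by inserting $j_C$ into the third slot of the extra-$\V$-naturality hexagon, thereby collapsing one $\B$-composition against the unit law and converting the other via the second formula for $S$. Every other step is either transposition of an axiom, routine application of $\ell_I = r_I$, or naturality of the unit isomorphisms.
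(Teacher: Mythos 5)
Your argument is correct, but it takes a more hands-on route than the paper. The paper's proof is organized around the enriched Yoneda lemma: it construes $j_B$ as an element of the presheaf $\B(-,B)$, so that the family $S_{(-)B}:\A(-,B)\Rightarrow\B(-,B)$ (which is your second displayed formula) is \emph{defined} as the induced $\V$-natural transformation; identity preservation then holds by construction, and composition preservation is checked by a two-square diagram whose left square is the $\V$-naturality of $S_{(-)C}$ (free from Yoneda) and whose right square follows from the $\V$-naturality of $\circ_{(-)BC}$ in the outer variable together with the unit law. You avoid Yoneda altogether and verify everything by transposing axioms: you get $S_{AC}\cdot c_{ABC}=\bullet_{(AB)C}\cdot(S_{AB}\otimes 1)$ from the composition axiom of the partial $\V$-functor $\B(A,-)$, and $\bullet_{(AB)C}=\circ_{ABC}\cdot(1\otimes S_{BC})$ from the extraordinary $\V$-naturality of $\circ$ in its \emph{middle} variable (specialized at $B'=C$) plus the right unit law of $\B$; note this is the mirror image of the paper's factorization, which passes through $\bullet_{A(BC)}$ rather than $\bullet_{(AB)C}$. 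Your identity-preservation computation via $\ell_I=r_I$ and the identity axiom of $\B(A,-)$, and your equality of the two defining formulas via the extranaturality of $j$, are also fine. What the paper's approach buys is economy (naturality of $S$ in the contravariant variable and unit preservation come packaged with the Yoneda lemma, which is then reused later, e.g.\ in the proof of Theorem \bref{thm:rec_vsmcc_from_ordinary_data}); what yours buys is a self-contained elementary verification that makes explicit exactly which pieces of the superposition data (functoriality of the actions, extranaturality of $\circ$ and $j$, unit laws) are being used.
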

\begin{proof}
For each object $B \in \A$, the morphism $j_B:I \rightarrow \B(B,B)$ can be construed as an `element' of the $\V$-presheaf $\B(-,B):\A^\op \rightarrow \uV$ and so, by the Yoneda lemma, determines a $\V$-natural transformation $S_{(-)B}:\A(-,B) \Rightarrow \B(-,B)$.  We claim that the resulting morphisms $S_{AB}:\A(A,B) \rightarrow \B(A,B)$ constitute an identity-on-objects $\V$-functor $S:\A \rightarrow \B$.  By its definition, $S_{AA}$ commutes with the identity morphisms $I \rightarrow \A(A,A)$ and $j_A:I \rightarrow \B(A,A)$.  Given objects $A,B,C \in \A$, it suffices to show that the following diagram commutes.
$$
\xymatrix{
\A(A,B)\otimes\A(B,C) \ar[d]_{c_{ABC}} \ar[r]^{1\otimes S_{BC}} & \A(A,B)\otimes\B(B,C) \ar[d]_{\bullet_{A(BC)}} \ar[r]^{S_{AB}\otimes 1} & \B(A,B)\otimes\B(B,C) \ar[d]^{\circ_{ABC}} \\
\A(A,C) \ar[r]_{S_{AC}} & \B(A,C) \ar@{=}[r] & \B(A,C)
}
$$
Since the composition morphism $c_{ABC}$ can be described equally as the transpose  of $\A(-,C)_{AB}$, and $\bullet_{A(BC)}$ is by definition the transpose of $\B(-,C)_{AB}$, the leftmost square commutes by the $\V$-naturality of $S_{(-)C}$.  Further, it is straightforward to verify that the $\V$-naturality of $\circ_{(-)BC}$ entails that the rightmost square commutes.
\end{proof}

\begin{LemSub}\label{thm:mon_superp}
Let $\B$ be a $\V$-category superposed upon a symmetric monoidal $\V$-category $\A$, and suppose that we are given morphisms
$$\B(A,A')\otimes\B(B,B') \xrightarrow{\boxtimes_{(A,B)(A',B')}} \B(A\otimes B,A'\otimes B')$$
$\V$-natural in $(A,B),(A',B') \in \A\otimes\A$ that yield a $\V$-functor $\boxtimes:\B\otimes\B \rightarrow \B$ given on objects as the tensor product $\otimes$ carried by $\A$.  Then the diagram of $\V$-functors
\begin{equation}\label{eq:mon_superp}
\xymatrix@C=4ex @R=3ex{
\A\otimes\A \ar[d]_\otimes \ar[r]^{S\otimes S} & \B\otimes\B \ar[d]^\boxtimes\\
\A \ar[r]_S & \B
}
\end{equation}
commutes, where $S$ is as defined in \bref{thm:vfunc_ind_by_superposed_vcat}
\end{LemSub}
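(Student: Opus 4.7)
My plan is to apply the $\V$-enriched Yoneda lemma to conclude that the two $\V$-functors $P := S \circ \otimes$ and $Q := \boxtimes \circ (S \otimes S) : \A \otimes \A \to \B$ coincide. Both act on objects as $\otimes$: for $P$ because $S$ is identity-on-objects, and for $Q$ because $\boxtimes$ agrees with $\otimes$ on objects by hypothesis and $S \otimes S$ is identity-on-objects. So it will suffice to verify that $P$ and $Q$ agree on hom-objects. First I would fix $(A', B') \in \A \otimes \A$ and seek to identify both $P_{(-)(A', B')}$ and $Q_{(-)(A', B')}$ as $\V$-natural transformations from the representable presheaf $(\A \otimes \A)(-, (A', B')) = \A(-, A') \otimes \A(-, B')$ to the single $\V$-presheaf $\B(- \otimes -, A' \otimes B') : (\A \otimes \A)^\op \to \uV$, obtained as $\B(-, A' \otimes B') \circ \otimes^\op$ using the $\V$-functoriality of $\otimes : \A \otimes \A \to \A$ together with the superposition structure on $\B$.

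For $P$, the $\V$-naturality in this sense follows from $\V$-functoriality of $\otimes$ combined with the $\V$-naturality of $S_{(-)(A' \otimes B')} : \A(-, A' \otimes B') \Rightarrow \B(-, A' \otimes B')$ provided in the Yoneda-style construction of $S$ in \bref{thm:vfunc_ind_by_superposed_vcat}. For $Q$, it requires the $\V$-naturality of $(S \otimes S)_{(-)(A', B')}$ together crucially with the hypothesized $\V$-naturality of $\boxtimes_{(-)(A', B')} : \B(-, A') \otimes \B(-, B') \Rightarrow \B(- \otimes -, A' \otimes B')$ in the $\A \otimes \A$-variables---this is the essential use of the lemma's $\V$-naturality hypothesis, which goes beyond the mere $\V$-functoriality of $\boxtimes : \B \otimes \B \to \B$. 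Once both $\V$-natural transformations are seen to land in the same $\V$-presheaf $\B(- \otimes -, A' \otimes B')$, the Yoneda lemma reduces the problem to comparing their values on the identity element $[1_{(A', B')}] : I \to \A(A', A') \otimes \A(B', B')$, yielding a single equation in $\B(A' \otimes B', A' \otimes B')$.

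This remaining computation is short: $P$ sends $[1_{(A', B')}]$ to $j_{A' \otimes B'}$ because $\V$-functoriality of $\otimes$ produces $[1_{A' \otimes B'}]$ and then the defining property of $S$ from \bref{thm:vfunc_ind_by_superposed_vcat} gives $S_{A' \otimes B', A' \otimes B'} \cdot [1_{A' \otimes B'}] = j_{A' \otimes B'}$; while $Q$ sends $[1_{(A', B')}]$ to $j_{A' \otimes B'}$ because that same property of $S$ shows $(S \otimes S)$ sends the identity to $(j_{A'} \otimes j_{B'}) \cdot \ell^{-1}$---the name of the identity of $(A', B')$ in $\B \otimes \B$---and $\V$-functoriality of $\boxtimes$ preserves this identity, producing $j_{A' \otimes B'}$ (using $\boxtimes = \otimes$ on objects). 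The hard part will be the identification of the common target $\V$-presheaf, since a priori one might only know $P_{(-)(A', B')}$ and $Q_{(-)(A', B')}$ to be $\V$-natural into the potentially distinct $\V$-presheaves $\B(P -, P(A', B'))$ and $\B(Q -, Q(A', B'))$, which need not coincide without the very equality being proved. The hypothesized $\V$-naturality of $\boxtimes$ in the $\A \otimes \A$-variables is precisely what overcomes this obstacle by routing both transformations through the single $\V$-presheaf built from the superposition and $\otimes$.
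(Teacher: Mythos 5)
Your proof is correct, and it takes a recognizably different route from the paper's. The paper proves \bref{thm:mon_superp} by a direct computation on hom-objects: it writes out both composites using the explicit formula for $S$ from \bref{thm:vfunc_ind_by_superposed_vcat} (namely $\bullet\circ(j\otimes 1)\circ\ell^{-1}$) and checks one pasting of two cells, one commuting because the $\V$-functor $\boxtimes$ preserves identity arrows and the other by the hypothesized $\V$-naturality of $\boxtimes$, used there in the covariant variables $(A',B')$, i.e.\ as compatibility with the right actions $\bullet_{(AB)C}$. You instead extend the Yoneda technique by which the paper constructed $S$: for fixed $(A',B')$ you exhibit both hom-families as $\V$-natural transformations out of the representable $(\A\otimes\A)(-,(A',B'))$ into the single presheaf $\B(-\otimes -,A'\otimes B')$ built from the superposition structure and $\otimes$, and then compare their values at the identity element, both of which are $j_{A'\otimes B'}$ (using that $\otimes$, $\boxtimes$ and $S$ preserve identities). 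You also correctly isolate the delicate point, namely that the common target must be the superposition-based presheaf rather than the a priori distinct presheaves built from the hom-maps of the two composites, and your resolution is sound: $S_{(-)C}$ is by its Yoneda construction $\V$-natural into $\B(-,C)$ with the superposition structure, and the lemma's hypothesis supplies the $\V$-naturality of $\boxtimes_{(-)(A',B')}$ in the contravariant variables (so you use the contravariant half of the hypothesis where the paper uses the covariant half; both are available). What each approach buys: yours avoids the explicit two-dimensional diagram chase and makes visible exactly where each hypothesis enters, at the cost of the bookkeeping needed to identify the target presheaf and an appeal to the enriched Yoneda lemma; the paper's computation is more elementary and self-contained, requiring no representability argument.
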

\begin{proof}
The diagram clearly commutes on objects, and the two composite $\V$-functors are given on homs by the composites on the periphery of the following diagram
$$
\renewcommand{\objectstyle}{\scriptstyle}
\renewcommand{\labelstyle}{\scriptstyle}
\xymatrix@C=8ex @R=2.5ex{
\A(A,A')\otimes\A(B,B') \ar[dd]_{t_{(A,B)(A',B')}} \ar[r] & \B(A,A)\otimes\A(A,A')\otimes\B(B,B)\otimes\A(B,B') \ar[r]^(.6){\bullet \otimes \bullet} \ar[d]^{1\otimes s\otimes 1} & \B(A,A')\otimes\B(B,B') \ar[dd]^{\boxtimes_{(A,B)(A',B')}}\\
 & \B(A,A)\otimes\B(B,B)\otimes\A(A,A')\otimes\A(B,B') \ar[d]^{\boxtimes_{(A,B)(A,B)} \otimes t_{(A,B)(A',B')}} & \\
\A(A\otimes B,A'\otimes B') \ar[r]_(.4){(j_{A\otimes B}\otimes 1) \circ \ell^{-1}} & \B(A\otimes B,A\otimes B)\otimes \A(A\otimes B,A'\otimes B') \ar[r]_(.6){\bullet} & \B(A\otimes B,A'\otimes B')
}
$$
where we have written $t$ for the structure morphisms of the tensor product $\V$-functor carried by $\A$, and the unlabelled arrow is $((j_A\otimes 1) \cdot \ell^{-1})\otimes((j_B\otimes 1) \cdot \ell^{-1})$.  But the leftmost cell commutes since the $\V$-functor $\boxtimes$ preserves identity arrows, and the rightmost cell commutes by the $\V$-naturality of the given morphisms $\boxtimes$.
\end{proof}

\begin{proof}[\textnormal{\textbf{Proof of \boldref{thm:rec_vsmcc_from_ordinary_data}.}}]
Writing $U = U^\M$, we shall equip $U_*\uM$ with the structure of a $\V$-category superposed upon $\M$ \pbref{def:superp}, as follows.  We have a $\V$-functor $\uM(-,-):\M^\op\otimes\M \rightarrow \M$ (by \cite[\S 1.10]{Ke:Ba}), and so we obtain a composite $\V$-functor
\begin{equation}\label{eq:superp_profunctor}\M^\op\otimes\M \xrightarrow{\uM(-,-)} \M \xrightarrow{U} \uV\;,\end{equation}
with respect to which the composition and unit morphisms
$$U\uM(L,M)\otimes U\uM(M,N) \xrightarrow{m^{U}} U(\uM(L,M)\otimes\uM(M,N)) \xrightarrow{U(c_{LMN})} U\uM(L,N)$$
\begin{equation}\label{eq:um_unit}I \xrightarrow{e^{U}} U I \xrightarrow{U[1_M]} U\uM(M,M)\end{equation}
for $U_*\uM$ are extraordinarily $\V$-natural\footnote{The composition morphisms $c_{LMN}:\uM(L,M)\otimes\uM(M,N) \rightarrow \uM(L,N)$ are extraordinarily $\V$-natural in $L,M,N \in \M$ since they are defined as mates of the composites $L\otimes\uM(L,M)\otimes\uM(M,N) \xrightarrow{\Ev_{LM}\otimes 1} M\otimes\uM(M,N) \xrightarrow{\Ev_{MN}} N$.  A similar remark applies to the morphisms $[1_M]:I \rightarrow \uM(M,M)$.} in $L,M,N \in \M$, resp. $M \in \M$.

By \bref{thm:vfunc_ind_by_superposed_vcat} we obtain an identity-on-objects $\V$-functor $S:\M \rightarrow U_*\uM$, and we claim that the structure morphisms $S_{MN}$ for $S$ are equal to the canonical isomorphisms
\begin{equation}\label{eq:can_isos_k}\M(M,N) \xrightarrow{\M(r,N)} \M(M\otimes I,N) \xrightarrow{\sim} \M(I,\uM(M,N)) = U\uM(M,N)\;.\end{equation}
Indeed, in the proof of \bref{thm:vfunc_ind_by_superposed_vcat} we defined $S_{(-)N}:\M(-,N) \Rightarrow U\uM(-,N)$ as the $\V$-natural transformation induced by the unit $I \rightarrow U\uM(N,N)$ \eqref{eq:um_unit}, but one readily finds that the canonical $\V$-natural isomorphism $\M(-,N) \Rightarrow U\uM(-,N)$ of \eqref{eq:can_isos_k} corresponds via the Yoneda lemma to this same unit element.  Therefore $S$ is an isomorphism of $\V$-categories, so it suffices to show that $S$ is a strict symmetric monoidal $\V$-functor.  The tensor product $\V$-functor $\boxtimes$ carried by $U_*\uM$ is given on homs by the composites
$$
\renewcommand{\objectstyle}{\scriptstyle}
\renewcommand{\labelstyle}{\scriptstyle}
\xymatrix@C=8ex @R=2.5ex{
U\uM(M,M')\otimes U\uM(N,N') \ar[r]^{m^{U}} & U(\uM(M,M')\otimes\uM(N,N')) \ar[rr]^{U(\otimes_{(M,N)(M',N')})} & & U\uM(M\otimes N,M'\otimes N')
}
$$
which are $\V$-natural in $(M,N),(M',N') \in \M\otimes\M$ with respect to the $\V$-functor \eqref{eq:superp_profunctor}, so by \bref{thm:mon_superp} the diagram \eqref{eq:mon_superp} commutes with $\A := \M$ and $\B := U_*\uM$.  Hence it suffices to show that $S$ strictly preserves the canonical isomorphisms $a,\ell,r,s$.  By \bref{thm:can_normns_commute_strictly}, the underlying ordinary symmetric monoidal categories $(U_*\uM)_0$ and $(\uM)_0$ are identical, so since $S$ is given on homs by the canonical isomorphisms \eqref{eq:can_isos_k} it is immediate that $S_0:\M_0 \rightarrow (\uM)_0$ is the canonical isomorphism $\M_0 \cong (\underline{\M_0})_0$ \pbref{par:clsmcat_notn}, which is strict symmetric monoidal.
\end{proof}

\begin{RemSub}\label{rem:can_isos_for_vsmccat}
In the situation of \bref{thm:rec_vsmcc_from_ordinary_data}, we deduce by \bref{thm:can_normns_commute_strictly} that the strict monoidal comparison functor $\theta^{U^\M}_*:(\uM)_0 \rightarrow (U^\M_*\uM)_0$ \pbref{eq:can_comp} is an identity functor.  Further, as noted within the above proof of \bref{thm:rec_vsmcc_from_ordinary_data}, the isomorphism of ordinary symmetric monoidal categories that underlies the isomorphism $U^\M_*\uM \cong \M$ of \bref{thm:rec_vsmcc_from_ordinary_data} is exactly the familiar canonical isomorphism $(\uM)_0 \cong \M_0$ \pbref{par:clsmcat_notn}.
\end{RemSub}

\section{Enrichment of a symmetric monoidal closed functor}\label{sec:enr_smcfunc}

\begin{DefSub}[Eilenberg-Kelly \cite{EiKe}]\label{def:enr_smc_func}
Given a symmetric monoidal functor $G:\V \rightarrow \W$ between closed symmetric monoidal categories, we obtain an associated $\W$-functor $\grave{G}:G_*\uV \rightarrow \uW$, given on objects just as $G$, with each morphism
\begin{equation}\label{eq:str_morph_mgrave}\grave{G}_{V_1 V_2} : (G_*\uV)(V_1,V_2) = G\uV(V_1,V_2) \rightarrow \uW(GV_1,GV_2)\;\;\;\;(V_1,V_2 \in \V)\end{equation}
obtained as the transpose of the composite
$$GV_1 \otimes G\uV(V_1,V_2) \xrightarrow{m^G} G(V_1 \otimes \uV(V_1,V_2)) \xrightarrow{G(\Ev)} GV_2\;.$$
Indeed, by \cite{EiKe}, $G$ determines a \textit{closed functor} $\V \rightarrow \W$, whose defining data include the structure morphisms \eqref{eq:str_morph_mgrave}, and the associated $\W$-functor $\grave{G}$ is obtained via \cite[I 6.6]{EiKe}.
\end{DefSub}

The following lemma is fundamental to several of the theorems in the sequel.

\begin{LemSub}[Fundamental Lemma]\label{thm:fund_lemma}
Let $G:\M \rightarrow \N$ be a 1-cell in $\VSMCCAT$.  Neglecting to distinguish notationally between 1-cells in $\VSMCCAT$ and their `underlying' 1-cells in $\SMCCAT$ \textnormal{(}\bref{exa:underlying_ordinary},\bref{thm:ch_base_vsmccats}\textnormal{)}, we have a commutative diagram of $\V$-functors
\begin{equation}\label{eq:fund_lem_diag}
\xymatrix{
U^\M_*\uM \ar[d]^\wr \ar[rr]^{\theta^G_*\uM} & & U^\N_*G_*\uM \ar[rr]^{U^\N_*(\grave{G})} & & U^\N_*\uN \ar[d]^\wr \\
\M \ar[rrrr]_G & & & & \N
}
\end{equation}
where $U^\M:\M \rightarrow \V$ and $U^\N:\N \rightarrow \V$ are the canonical $\V$-normalizations of $\M$ and $\N$, respectively, and the vertical arrows are the canonical isomorphisms \pbref{thm:rec_vsmcc_from_ordinary_data}.
\end{LemSub}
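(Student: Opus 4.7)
The plan is to verify commutativity of \eqref{eq:fund_lem_diag} by reducing to an equation of morphisms in $\V$, one for each pair of objects of $\M$. First I would observe that all four corners agree on objects under the stated identifications: the canonical isomorphisms of \bref{thm:rec_vsmcc_from_ordinary_data} are identity-on-objects, the change-of-base component $\theta^G_*\uM$ is identity-on-objects \pbref{par:ch_base}, and $\grave{G}$ agrees with $G$ on objects \pbref{def:enr_smc_func}. Hence the outer diagram commutes on objects, and it remains to check equality of induced hom-morphisms $\V(I,\uM(M_1,M_2)) \rightarrow \V(I,\uN(GM_1,GM_2))$ for each $M_1,M_2 \in \M$, where I use the canonical isomorphisms to transport the hom actions of $G$ and of the vertical isos to morphisms between these $\V$-objects.

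Next I would compute both paths explicitly. Along the top-right, $\theta^G_*\uM$ acts on homs as post-composition with (and pre-composition induced by) $\theta^G_{\uM(M_1,M_2)}$, so it sends $f:I\rightarrow\uM(M_1,M_2)$ to $G(f)\circ e^G:I\rightarrow G\uM(M_1,M_2)$; then $U^\N_*(\grave{G})$ post-composes with the Eilenberg-Kelly structure map $\grave{G}_{M_1M_2}:G\uM(M_1,M_2)\rightarrow\uN(GM_1,GM_2)$. The combined output is $\grave{G}_{M_1M_2}\circ G(f)\circ e^G$. Along the left-bottom path, $f$ first corresponds, under the canonical iso of \bref{thm:rec_vsmcc_from_ordinary_data} (described explicitly by \eqref{eq:can_isos_k}), to its transpose $\bar f := \Ev_{M_1M_2}\circ (1_{M_1}\otimes f)\circ r^{-1}_{M_1}:M_1\rightarrow M_2$ in $\M_0$; then one applies $G$, and the resulting morphism $G(\bar f):GM_1\rightarrow GM_2$ is re-named as an element of $\V(I,\uN(GM_1,GM_2))$ via the analogous canonical iso for $\N$. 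So the required equality is that $\grave{G}_{M_1M_2}\circ G(f)\circ e^G$ is the name of $G(\bar f)$.

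To verify this, I would take the transpose of $\grave{G}_{M_1M_2}\circ G(f)\circ e^G$ back to a morphism $GM_1\rightarrow GM_2$ and use the defining equation of $\grave{G}$, namely that $\Ev_{GM_1,GM_2}\circ(1_{GM_1}\otimes\grave{G}_{M_1M_2}) = G(\Ev_{M_1M_2})\circ m^G$. The result is
\[
G(\Ev_{M_1M_2})\circ m^G \circ \bigl(1_{GM_1}\otimes (G(f)\circ e^G)\bigr)\circ r^{-1}_{GM_1}.
\]
Pulling $G(f)$ past $m^G$ by $\V$-naturality of $m^G$, and then using the unit coherence law of the monoidal functor $G$ in the form $m^G\circ(1_{GM_1}\otimes e^G)\circ r^{-1}_{GM_1}=G(r^{-1}_{M_1})$, the expression collapses to $G\bigl(\Ev_{M_1M_2}\circ(1_{M_1}\otimes f)\circ r^{-1}_{M_1}\bigr)=G(\bar f)$, as required.

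The only real obstacle is bookkeeping: keeping straight which $G$ is the $\V$-functor versus the underlying SMCC-functor, tracking the two change-of-base steps, and matching the iso of \bref{thm:rec_vsmcc_from_ordinary_data} with the Eilenberg-Kelly transpose. Conceptually there is no difficulty, since the definition of $\grave{G}$ is designed precisely so that it represents the hom-action of $G$ under the adjunction iso $\M(M_1,M_2)\cong\M(I,\uM(M_1,M_2))$.
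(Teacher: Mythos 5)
There is a genuine gap, and it lies in the reduction step rather than in the computation itself. After correctly noting that all the $\V$-functors agree on objects, you must show that the two composites have \emph{equal hom-morphisms in $\V$}, i.e.\ equal morphisms $\M(I,\uM(M_1,M_2)) \rightarrow \N(J,\uN(GM_1,GM_2))$ between hom-\emph{objects} of $\V$ (note these are the $\V$-valued homs of $\M$ and $\N$, not ``$\V(I,\uM(M_1,M_2))$'', which does not typecheck since $\uM(M_1,M_2)$ is an object of $\M$). Your verification, however, tests these morphisms only on global elements: you take an actual morphism $f:I \rightarrow \uM(M_1,M_2)$ in $\M_0$, trace it through both paths, and show the outputs $\grave{G}_{M_1M_2}\circ G(f)\circ e^G$ and the name of $G(\bar f)$ agree. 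That computation is correct (transpose, use the defining equation of $\grave{G}$, naturality of $m^G$, and the unit coherence of $G$), but it only establishes that the two composites have the same \emph{underlying ordinary functors}, i.e.\ that the diagram commutes after applying $\V(I_\V,-)$ to all hom-objects. For a general base $\V$ the functor $\V(I_\V,-)$ need not be faithful, so agreement on such elements does not yield the asserted equality of $\V$-functors; what you have proved is essentially the $\SET$-level shadow of the lemma (in effect Corollary \bref{thm:recovering_smcfunc_from_its_enr}), not the lemma itself.

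The repair is to internalize your calculation, which is exactly what the paper's proof does: it writes both hom-maps as explicit composites of morphisms in $\V$ --- using the hom-morphism $G_{I,\uM(L,M)}$ of the $\V$-functor $G$, the morphism $\N(e^G,1)$, the morphism $\N(J,\grave{G}_{LM})$, and the explicit form \eqref{eq:can_isos_k} of the canonical isomorphisms of \bref{thm:rec_vsmcc_from_ordinary_data} --- and then verifies a single diagram in $\V$ whose cells commute by the $\V$-naturality of $m^G$, the definition of $\grave{G}$, the monoidality of $G$, and $\V$-functoriality. Your transposition/naturality/unit-coherence steps all have enriched analogues, so the idea transfers, but each step must be carried out on the hom-objects themselves (equivalently, with the identity generalized element) rather than elementwise; as written, the argument does not prove the enriched statement.
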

\begin{proof}
Since $U^\N_*(\grave{G})$ and $G$ are given in the same way on objects, and the other $\V$-functors in the diagram are identity-on-objects, the diagram commutes on objects.  Using the description of the canonical isomorphism $\N \cong U^\N_*\uN$ given at \eqref{eq:can_isos_k}, we observe that the clockwise composite is given on homs as the clockwise composite around the periphery of the following diagram, where $L, M \in \M$; for typographical reasons, we have denoted the $\V$-valued hom bifunctors for $\M, \N$ by $\langle-,-\rangle$ and the internal hom bifunctors by $[-,-]$.  Also, we have denoted the unit objects of $\M$, $\N$ by $I$, $J$, respectively.
$$
\renewcommand{\objectstyle}{\scriptscriptstyle}
\renewcommand{\labelstyle}{\scriptscriptstyle}
\xymatrix@!0@C=23ex @R=8ex{
\langle I,[L,M]\rangle \ar[d]|{L \otimes (-)} \ar[r]^G & \langle GI,G[L,M]\rangle \ar[d]|{GL \otimes (-)} \ar[r]^{\langle e^G,1\rangle} & \langle J,G[L,M]\rangle \ar[d]|{GL \otimes (-)} \ar[r]^{\langle 1,\grave{G}_{LM}\rangle} & \langle J,[GL,GM]\rangle \ar[dd]^\wr\\
\langle L \otimes I,L \otimes [L,M]\rangle \ar@/_10ex/[ddd]^(.7){\langle r^{-1},\Ev\rangle} \ar[d]^G & \langle GL \otimes GI, GL \otimes G[L,M]\rangle \ar[d]|{\langle 1,m^G\rangle} \ar[r]^{\langle 1 \otimes e^G,1\rangle} & \langle GL \otimes J,GL \otimes G[L,M]\rangle \ar[d]|{\langle 1,m^G\rangle} & \\
\langle G(L \otimes I),G(L \otimes [L,M])\rangle \ar[r]^{\langle m^G,1\rangle} \ar[dr]|{\langle Gr^{-1},1\rangle} & \langle GL \otimes GI,G(L \otimes [L,M])\rangle \ar[r]^{\langle 1 \otimes e^G,1\rangle} & \langle GL \otimes J,G(L \otimes [L,M])\rangle \ar[r]^{\langle 1,G\Ev\rangle} & \langle GL \otimes J,GM\rangle \ar[dd]_{\langle r^{-1},1\rangle}\\
& \langle GL,G(L \otimes [L,M])\rangle \ar[ur]|{\langle r,1\rangle} & & \\
\langle L,M\rangle \ar[r]_G & \langle GL,GM\rangle \ar[uurr]|{\langle r,1\rangle} \ar@{=}[rr] & & \langle GL,GM\rangle
}
$$
On the other hand, the counter-clockwise composite $\V$-functor in \eqref{eq:fund_lem_diag} is given on homs by the counter-clockwise composite around the periphery of this diagram, so it suffices to show that the diagram commutes.  The top-left cell commutes by the $\V$-naturality of $m^G$, and the top-right cell commutes by the definition of $\grave{G}_{LM}$ \pbref{def:enr_smc_func}.  The triangular cell on the interior of the diagram commutes by the monoidality of $G$, and the remaining cells in the diagram clearly also commute.
\end{proof}

\begin{DefSub}\label{def:id_on_obs_strsmfun_kg}
Given a 1-cell $G:\V \rightarrow \W$ in $\SMCCAT$, let us denote by
$$K^G:\V \rightarrow (G_*\uV)_0$$
the canonical comparison functor $\theta^G_*\uV$ of \eqref{eq:can_comp}, where here we have identified $(\uV)_0$ with $\V$ by convention.  Recall that $K^G$ is an identity-on-objects strict symmetric monoidal functor.  Note also that $K^G$ is an isomorphism as soon as $G$ is normal.
\end{DefSub}

\begin{CorSub}\label{thm:recovering_smcfunc_from_its_enr}
Given a 1-cell $G:\V \rightarrow \W$ in $\SMCCAT$, the diagram
\begin{equation}\label{eq:recovering_smcfunc_from_its_enr}
\xymatrix{
\V \ar[rr]^{K^G} \ar[dr]_G &     & (G_*\uV)_0 \ar[dl]^{\grave{G}_0}\\
                           & \W  &
}
\end{equation}
in $\CAT$ commutes.
\end{CorSub}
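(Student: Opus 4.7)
The plan is to deduce this corollary as an immediate specialization of the Fundamental Lemma \bref{thm:fund_lemma}. Concretely, I would apply \bref{thm:fund_lemma} to the given 1-cell $G:\V \to \W$ of $\SMCCAT$, with the enriching base taken to be $\SET$ and with $\M := \V$ and $\N := \W$ regarded as symmetric monoidal closed $\SET$-categories, so that $G$ itself becomes a 1-cell in $\eSMCCAT{\SET}$.

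Under this specialization one must translate the lemma's notation. By \bref{exa:underlying_ordinary}, the $\V$-functor $U^\M$ of the lemma becomes the ordinary canonical normalization $U^\V = \V(I,-):\V \to \SET$, and similarly $U^\N$ becomes $U^\W$. The autoenrichment $\uM = \underline{\M_0}$ becomes the familiar $\V$-category $\uV$, and the change-of-base $U^\M_*\uM$ becomes the underlying ordinary category $(\uV)_0$, which under the standing convention \pbref{par:clsmcat_notn} is identified with $\V$; likewise $U^\N_*\uN$ is identified with $\W$. Remark \bref{rem:can_isos_for_vsmccat} ensures that under these identifications the canonical vertical isomorphisms of \eqref{eq:fund_lem_diag}, supplied by \bref{thm:rec_vsmcc_from_ordinary_data}, reduce to identity functors.

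The horizontal composite of \eqref{eq:fund_lem_diag} then specializes precisely to the upper path of \eqref{eq:recovering_smcfunc_from_its_enr}: its first factor $\theta^G_*\uV$ is literally the defining expression of $K^G$ in \bref{def:id_on_obs_strsmfun_kg}, where one invokes the uniqueness statement \bref{thm:cmp_uniq_mon_trans} to identify the $\theta^G$ of \bref{def:can_normn_1} entering the Fundamental Lemma with the $\theta^G$ used to define $K^G$; its second factor $U^\W_*\grave{G}$ becomes $\grave{G}_0$ under the identification $(\uW)_0 = \W$. Hence the lemma's commutative square collapses to exactly the triangle $\grave{G}_0 \circ K^G = G$ that we wish to establish. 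I do not foresee any substantive obstacle beyond this translation of notation, since the content of the statement is already packaged inside the Fundamental Lemma.
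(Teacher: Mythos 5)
Your proposal is correct and follows essentially the same route as the paper: the paper's proof likewise specializes the Fundamental Lemma \bref{thm:fund_lemma} to the base of enrichment $\SET$, notes that $K^G = \theta^G_*\uV$ by definition, and observes that under the identifications $\V \cong (\uV)_0$ and $\W \cong (\uW)_0$ the vertical isomorphisms of \eqref{eq:fund_lem_diag} become identities, collapsing the square to the desired triangle.
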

\begin{proof}
Since by definition $K^G = \theta^G_*\uV$ \pbref{def:id_on_obs_strsmfun_kg}, we obtain an instance of the commutative diagram \eqref{eq:fund_lem_diag} in which the base of enrichment is $\SET$.  But since we have identified $\V$ with $U^\V_*\uV = (\uV)_0$ along the canonical isomorphism $U^\V_*\uV \xrightarrow{\sim} \V$, and similarly for $\W$, the left and right sides of the rectangle \eqref{eq:fund_lem_diag} are identity morphisms in this case.
\end{proof}

\begin{PropSub}\label{thm:ggrave_enrsmcfun}
Let $G:\V \rightarrow \W$ be a 1-cell in $\SMCCAT$.  Then $\grave{G}:G_*\uV \rightarrow \uW$ is a 1-cell in $\WSMCCAT$, i.e. a symmetric monoidal $\W$-functor between symmetric monoidal closed $\W$-categories.
\end{PropSub}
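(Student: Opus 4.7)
My plan is to endow $\grave G$ with a symmetric monoidal $\W$-functor structure inherited directly from the symmetric monoidal structure on $G$, then to verify the axioms by diagram chase and appeal to the ordinary axioms for $G$. Both $G_*\uV$ and $\uW$ are symmetric monoidal closed $\W$-categories, by \bref{thm:ch_base_vsmccats} and \bref{v_is_a_vsmc_vcat}; and $\grave G$ is already a $\W$-functor between them, by \bref{def:enr_smc_func}. From the proof of \bref{thm:ch_base_vsmccats}, the tensor $\boxtimes$ on $G_*\uV$ has unit $I_\V$ and agrees with $\otimes_\V$ on objects, while on homs it is given by $m^G$ followed by $G$ applied to the $\V$-tensor of $\uV$. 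Accordingly, I set $e^{\grave G} := e^G \colon I_\W \to GI_\V$ and, for $V_1, V_2 \in \V$, $m^{\grave G}_{V_1,V_2} := m^G_{V_1,V_2} \colon GV_1 \otimes_\W GV_2 \to G(V_1 \otimes V_2)$, noting that $\grave G(I_{G_*\uV}) = GI_\V$ and $\grave G(V_1 \boxtimes V_2) = G(V_1 \otimes V_2)$.

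The first task, and the step I expect to be the main obstacle, is to verify that $m^{\grave G}$ is $\W$-natural in $(V_1, V_2)$. This reduces to the commutativity of a diagram in $\W$ whose edges unwind to: the hom-action of $\grave G$, which by \bref{def:enr_smc_func} is the transpose of $G(\Ev) \circ m^G$; the hom-action of $\boxtimes$ on $G_*\uV$ recalled above; the hom-action of $\otimes$ on $\uW$, described via \bref{exa:v_is_a_sm_vcat} as the transpose of a composite of evaluations; and composition with $m^{\grave G}$. A diagram chase using the naturality of $m^G$ in $\V$, the Eilenberg--Kelly coherence of $G$ with $\Ev$, and the characterizing property of transposes for the internal-hom adjunctions in $\V$ and $\W$, will complete this step.

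The remaining associativity, unit, and symmetry axioms for $(\grave G, e^{\grave G}, m^{\grave G})$ are each equations of morphisms in $\W$ between objects of the form $G(V_1 \otimes \cdots \otimes V_n)$ and $GV_1 \otimes_\W \cdots \otimes_\W GV_n$. Since $e^{\grave G} = e^G$ and $m^{\grave G} = m^G$ by construction, and since the associators, unitors, and symmetries carried by $\uV$ and $\uW$ agree on the relevant hom-sets with those of $\V$ and $\W$ (via the canonical identifications of \bref{thm:rec_vsmcc_from_ordinary_data}, and, for comparison between $\V$ and $G_*\uV$, via the strict symmetric monoidal functor $K^G$ of \bref{def:id_on_obs_strsmfun_kg}), each of these axioms reduces directly to the corresponding axiom for $G$ as a 1-cell of $\SMCCAT$. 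Together with the $\W$-naturality of $m^{\grave G}$ established above, this will complete the verification that $\grave G$ is a 1-cell in $\WSMCCAT$.
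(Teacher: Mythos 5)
Your proposal is correct and takes essentially the same route as the paper: equip $\grave{G}$ with $e^G$ and $m^G$, establish the $\W$-naturality of $m^G$ by transposing and chasing a diagram built from $\Ev$, $m^G$ and the definitions of $\grave{G}$ and the tensor $\W$-functors, and then reduce the symmetric monoidal functor axioms to those of $G$ via the identity-on-objects strict symmetric monoidal comparison $K^G$ together with the identity $\grave{G}_0 \circ K^G = G$ of \bref{thm:recovering_smcfunc_from_its_enr}. The only ingredient your sketch of the naturality chase leaves implicit is that one cell needs the \emph{symmetry} axiom for $G$ (compatibility of $m^G$ with the braidings, used to pass $m^G \otimes m^G$ across the middle interchange $1 \otimes s \otimes 1$), exactly as the paper notes in its diagram.
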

\begin{proof}
By \bref{thm:ch_base_vsmccats}, $G_*\uV$ is a symmetric monoidal closed $\W$-category.  The $\W$-functor $\grave{G}$ is given on objects just as $G$, and we claim that the monoidal structure morphisms
$$m^G_{UV}:GU \otimes GV \rightarrow G(U \otimes V)\;\;\;\;(U,V \in \V)$$
and $e^G:I_\W \rightarrow GI_\V$ carried by $G$ constitute a symmetric $\W$-monoidal structure on $\grave{G}$.  

To prove this, we first show that $m^G_{UV}$ is $\W$-natural in $U,V \in G_*\uV$, i.e., that $m^G$ is a 2-cell
\begin{equation}\label{eqn:nat_str_morph_smc_func}
\xymatrix{
G_*\uV \otimes G_*\uV \ar[d]_{\grave{G} \otimes \grave{G}} \ar[r]^{m^{G_*}} & G_*(\uV \otimes \uV) \ar[r]^(.6){G_*(\otimes)} & G_*\uV \ar[d]^{\grave{G}} \ar@{}[dll]^(0.4){}="src"^(0.6){}="tgt" \ar@{<=}"src";"tgt"^{m^G}\\
\uW \otimes \uW \ar[rr]_\otimes & & \uW
}
\end{equation}
in $\WCAT$, where $m^{G_*}$ is the monoidal structure carried by $G_*$ and hence the composite of the upper row is the tensor product $\W$-functor carried by $G_*\uV$.  In detail, we must show that for every pair of objects $(U,V), (U',V')$ in $G_*\uV \otimes G_*\uV$, i.e., all $U,V,U',V' \in \V$, the following diagram in $\W$ commutes 
\begin{equation}\label{eqn:nat_str_morph_smc_func_detail}
\renewcommand{\objectstyle}{\scriptstyle}
\renewcommand{\labelstyle}{\scriptstyle}
\xymatrix{
(G_*\uV \otimes G_*\uV)((U,V),(U',V')) \ar[d]_{m^{G_*}} \ar[rr]^{\grave{G} \otimes \grave{G}} & & (\uW \otimes \uW)((GU,GV),(GU',GV')) \ar[d]^\otimes\\
(G_*(\uV\otimes\uV))((U,V),(U',V')) \ar[d]_{G_*(\otimes)} & & \uW(GU \otimes GV,GU' \otimes GV') \ar[d]^{\uW(1,m^G)}\\
(G_*\uV)(U \otimes V, U' \otimes V') \ar[r]_{\grave{G}} & \uW(G(U \otimes V),G(U' \otimes V'))) \ar[r]_{\uW(m^G,1)} & \uW(GU \otimes GV, G(U' \otimes V'))
}
\end{equation}
where all but the last arrow in each composite is labelled with the name of the $\uW$-functor whose `action-on-homs' is thereby denoted. 

Using the definitions of the $\W$-functors involved in \eqref{eqn:nat_str_morph_smc_func}, we find that the transposes of the composite morphisms in \eqref{eqn:nat_str_morph_smc_func_detail} are given as the two composites on the periphery of the following diagram.  For typographical reasons, we have denoted the internal homs in $\V$ and $\W$ by $[-,-]$.
$$
\renewcommand{\objectstyle}{\scriptscriptstyle}
\renewcommand{\labelstyle}{\scriptscriptstyle}
\xymatrix@!0@C=24ex @R=8ex{
GU \otimes GV \otimes G[U,U'] \otimes G[V,V'] \ar[d]_{m^G \otimes 1 \otimes 1} \ar[drr]^{1 \otimes s \otimes 1} \ar[rrr]^{1 \otimes 1 \otimes \grave{G}_{ UU'} \otimes \grave{G}_{ VV'}} & & & *!<6ex,0ex>{GU \otimes GV \otimes [GU,GU'] \otimes [GV,GV']} \ar[d]^{1 \otimes s \otimes 1}\\
G(U \otimes V) \otimes G[U,U'] \otimes G[V,V'] \ar[dd]_{1 \otimes m^G} & & *!<8.2ex,0ex>{GU \otimes G[U,U'] \otimes GV \otimes G[V,V']} \ar[r]^(.21){1 \otimes \grave{G} \otimes 1 \otimes \grave{G}} \ar@<-4ex>[d]_{m^G \otimes m^G} & *!<6ex,0ex>{GU \otimes [GU,GU'] \otimes GV \otimes [GV,GV']} \ar[d]^{\Ev \otimes \Ev}\\
& & *!<4ex,0ex>{G(U \otimes [U,U']) \otimes G(V \otimes [V,V'])} \ar[r]^(.5){G\Ev \otimes G\Ev} \ar@<-4ex>[d]^{m^G} & GU' \otimes GV' \ar[dd]^{m^G}\\ 
G(U \otimes V) \otimes G([U,U'] \otimes [V,V']) \ar[d]^{G(U \otimes V) \otimes G\left(\otimes_{ (U,V)(U',V')}\right)} \ar[r]^{m^G} & G(U \otimes V \otimes [U,U'] \otimes [V,V']) \ar[d]^{G\left(U \otimes V \otimes (\otimes_{(U,V)(U',V')})\right)} \ar[r]^{G(1 \otimes s \otimes 1)} & G(U \otimes [U,U'] \otimes V \otimes [V,V']) \ar[dr]|{G(\Ev \otimes \Ev)} & \\
G(U \otimes V) \otimes G[U\otimes V, U' \otimes V'] \ar[r]_{m^G} & G(U \otimes V \otimes [U \otimes V,U' \otimes V']) \ar[rr]_{G\Ev} & & G(U' \otimes V')
}
$$
The cell at the bottom-left commutes by the naturality of $m^G$, and the cell immediately to the right of this commutes by the definition of $\otimes_{(U,V)(U',V')}$.  The lowest of the cells on the right-hand-side commutes by the naturality of $m^G$, and the cell immediately above this commutes by the definition of $\grave{G}$.  The cell at the top-right clearly commutes, and the large cell at the top-left is readily shown to commute, by using the fact that $G$ is a \textit{symmetric} monoidal functor.  Hence the diagram commutes, so \eqref{eqn:nat_str_morph_smc_func_detail} commutes.

Having thus shown that $m^G$ is $\W$-natural, it now suffices to show that $\grave{G}:G_*\uV \rightarrow \uW$, when equipped with $m^G$ and $e^G$, satisfies the equations for a symmetric monoidal $\W$-functor.  By \bref{def:id_on_obs_strsmfun_kg}, we have an identity-on-objects strict symmetric monoidal functor $K^G:\V \rightarrow (G_*\uV)_0$, so if $a,\ell,r,s$ denote the symmetric monoidal structure morphisms of $\V$, then those of $G_*\uV$ are obtained as $K^Ga, K^G\ell, K^Gr, K^Gs$.  Hence, using \bref{thm:recovering_smcfunc_from_its_enr}, we readily compute that the diagrammatic equations that must hold in order that $\grave{G}$ be a symmetric monoidal $\W$-functor are exactly the same as those for $G$.
\end{proof}

Knowing now that $\grave{G}$ is a monoidal $\V$-functor, we obtain a strengthened form of the Fundamental Lemma, as follows.

\begin{LemSub}[Monoidal Fundamental Lemma]\label{thm:mon_fund_lemma}
Let $G:\M \rightarrow \N$ be a 1-cell in $\VSMCCAT$.  Then the diagram \eqref{eq:fund_lem_diag} in $\VSMCCAT$ commutes.
\end{LemSub}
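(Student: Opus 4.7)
The plan is to reduce to the (unmonoidal) Fundamental Lemma \ref{thm:fund_lemma} together with a bookkeeping analysis of the symmetric $\V$-monoidal structures carried by the four $\V$-functors forming the rectangle \eqref{eq:fund_lem_diag}. First I would verify that each of these $\V$-functors is canonically a 1-cell in $\VSMCCAT$, as follows. The vertical isomorphisms $U^\M_*\uM \cong \M$ and $U^\N_*\uN \cong \N$ are \emph{strict} symmetric $\V$-monoidal by \bref{thm:rec_vsmcc_from_ordinary_data}. The horizontal $\V$-functor $\theta^G_*\uM$ is strict symmetric $\V$-monoidal by \bref{thm:ch_base_vsmcats}(2), since $\theta^G:U^\M \Rightarrow U^\N G$ is a monoidal $2$-cell in $\SMCAT$ by \bref{thm:cmp_uniq_mon_trans}. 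Finally, $U^\N_*(\grave{G})$ is obtained by applying the change-of-base $2$-functor $U^\N_*:\eSMCCAT{\N_0}\rightarrow\VSMCCAT$ (which is defined since $U^\N$ is a 1-cell of $\SMCCAT$) to the symmetric monoidal $\N_0$-functor $\grave{G}$ produced by \bref{thm:ggrave_enrsmcfun}.

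Having placed the diagram into $\VSMCCAT$, I would then invoke \bref{thm:fund_lemma} to conclude that the underlying $\V$-functors of the clockwise and counter-clockwise composites agree. It then remains to check that the two composites agree also in their symmetric $\V$-monoidal structure, i.e., that the structure morphisms $m$ and $e$ of the two composite monoidal $\V$-functors coincide. The clockwise composite $\M \xrightarrow{G} \N \xleftarrow{\sim} U^\N_*\uN$, being a composite of $G$ with strict isomorphisms, carries $m^G$ and $e^G$ (modulo those strict identifications). For the counter-clockwise composite, strictness of both $\theta^G_*\uM$ and the vertical isomorphisms reduces the computation to the structure morphisms carried by $U^\N_*(\grave{G})$; by \bref{par:ch_base_2func_on_smvcats}, these are obtained from the structure morphisms of $\grave{G}$ by applying the comparison $\theta^{U^\N}_*\uN$, which is an \emph{identity} by \bref{rem:can_isos_for_vsmccat}. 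But the structure morphisms carried by $\grave{G}$ are precisely $m^G$ and $e^G$, by the definition of the monoidal structure on $\grave{G}$ produced in \bref{thm:ggrave_enrsmcfun}. Hence the two composites carry the same monoidal structure and are equal as 1-cells of $\VSMCCAT$.

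The main obstacle is not conceptual but notational: one must carefully track the identifications along the strict symmetric $\V$-monoidal isomorphisms $U^\M_*\uM \cong \M$ and $U^\N_*\uN \cong \N$, and confirm that the three auxiliary pieces ($\theta^G_*\uM$, the application of the $2$-functor $U^\N_*$, and the vertical isomorphisms) contribute only strict structure, so that the monoidal content of both composites is entirely carried by $m^G$ and $e^G$. Provided this accounting is done carefully, no new diagrammatic computation beyond \bref{thm:fund_lemma} is required.
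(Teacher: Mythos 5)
Your proposal is correct and follows essentially the same route as the paper's own proof: the underlying $\V$-functors agree by the Fundamental Lemma \bref{thm:fund_lemma}, and the monoidal structures of the two composites are identified by noting that $\grave{G}$ carries exactly the structure of $G$ \pbref{thm:ggrave_enrsmcfun}, that the comparison $\theta^{U^\N}_*$ governing the structure of $U^\N_*(\grave{G})$ is an identity \pbref{par:ch_base_2func_on_smvcats}, \pbref{rem:can_isos_for_vsmccat}, and that the remaining 1-cells in \eqref{eq:fund_lem_diag} are strict with identity underlying ordinary functors. The only difference is cosmetic bookkeeping in how the two composites around the square are described.
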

\begin{proof}
It suffices to show that the monoidal structures carried by the two composite $\V$-functors in \eqref{eq:fund_lem_diag} are equal.  Per the convention of \bref{par:clsmcat_notn}, we identify $\N_0$ with the underlying ordinary category of $\uN = \underline{\N_0}$.  Then the monoidal structure carried by the $\N_0$-functor $\grave{G}:G_*\uM \rightarrow \uN$ is, by definition, exactly the same as that carried by $G$ \pbref{thm:ggrave_enrsmcfun}.  By \bref{par:ch_base_2func_on_smvcats}, the monoidal structure morphisms carried by the $\V$-functor $U^\N_*(\grave{G})$ can be obtained from those of $\grave{G}$ (i.e., those of $G$) by applying the canonical identity-on-objects functor $\theta^{U^\N}_*:\N_0 = (\uN)_0 \rightarrow (U^{\N}_*\uN)_0$.  However, in view of \bref{rem:can_isos_for_vsmccat}, in fact $(U^{\N}_*\uN)_0 = \N_0$, and the latter functor is the identity functor on $\N_0$.  Hence the monoidal structure carried by $U^\N_*(\grave{G})$ is exactly the same as that of $G$.  The other $\W$-functors in \eqref{eq:fund_lem_diag} are strict monoidal, and by \bref{rem:can_isos_for_vsmccat}, the ordinary functors underlying the vertical arrows in \eqref{eq:fund_lem_diag} are the identity functors on $\M_0$ and $\N_0$, so the result follows.
\end{proof}

\begin{CorSub}\label{thm:recovering_smcfunc_from_its_enr__monoidal_version}
Given a 1-cell $G:\V \rightarrow \W$ in $\SMCCAT$, the diagram \eqref{eq:recovering_smcfunc_from_its_enr} in $\SMCCAT$ commutes.
\end{CorSub}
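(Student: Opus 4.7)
The plan is to obtain this corollary as a direct specialization of the Monoidal Fundamental Lemma \bref{thm:mon_fund_lemma}, by taking the base of enrichment to be $\SET$. This exactly parallels the way that \bref{thm:recovering_smcfunc_from_its_enr} was deduced from the (non-monoidal) Fundamental Lemma \bref{thm:fund_lemma}, but now lifted from $\CAT$ to $\SMCCAT$.

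Concretely, apply \bref{thm:mon_fund_lemma} with base of enrichment $\SET$ (so that $\VSMCCAT$ becomes $\SMCCAT$) to the 1-cell $G:\V \rightarrow \W$, viewed as a 1-cell in $\SET$-$\SMCCAT = \SMCCAT$ with source $\M := \V$ and target $\N := \W$. In this instance, $U^\M = U^\V:\V \rightarrow \SET$ and $U^\N = U^\W:\W \rightarrow \SET$ are the canonical normalizations of $\V$ and $\W$, and $\uM = \uV$, $\uN = \uW$ are the usual autoenrichments. The diagram \eqref{eq:fund_lem_diag} then becomes a commutative square in $\SMCCAT$ whose top row is $\theta^G_*\uV$ followed by $U^\W_*(\grave{G})$, whose bottom row is $G$, and whose vertical arrows are the canonical isomorphisms $U^\V_*\uV \cong \V$ and $U^\W_*\uW \cong \W$ of \bref{thm:rec_vsmcc_from_ordinary_data}.

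It remains to identify each side of this square with the corresponding edge of \eqref{eq:recovering_smcfunc_from_its_enr}. By \bref{rem:can_isos_for_vsmccat}, the ordinary functors underlying the two vertical isomorphisms are the identity functors on $\V$ and $\W$ respectively (under the standing identification $(\uV)_0 = \V$ from \bref{par:clsmcat_notn}), and since they are moreover strict symmetric monoidal, they are identity morphisms in $\SMCCAT$. By \bref{def:id_on_obs_strsmfun_kg}, $\theta^G_*\uV = K^G$ as strict symmetric monoidal functors $\V \rightarrow (G_*\uV)_0$. Finally, $U^\W_*$ is the underlying-ordinary-category 2-functor \pbref{exa:underlying_ordinary}, promoted to symmetric monoidal closed functors via \bref{thm:ch_base_vsmccats}, so $U^\W_*(\grave{G}) = \grave{G}_0$ as 1-cells in $\SMCCAT$.

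Under these identifications, the commutative square in $\SMCCAT$ produced by \bref{thm:mon_fund_lemma} collapses to the triangle \eqref{eq:recovering_smcfunc_from_its_enr}, now interpreted in $\SMCCAT$. The only potential subtlety is the bookkeeping around the identifications $U^\V_*\uV = \V$ and $U^\W_*\uW = \W$ at the symmetric monoidal closed level, but this is precisely what is supplied by \bref{rem:can_isos_for_vsmccat}, so no further argument is needed.
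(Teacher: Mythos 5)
Your proof is correct and follows exactly the route the paper intends: the corollary is left as an immediate consequence of the Monoidal Fundamental Lemma \bref{thm:mon_fund_lemma} specialized to base of enrichment $\SET$, in precise parallel with how \bref{thm:recovering_smcfunc_from_its_enr} was deduced from \bref{thm:fund_lemma}, with the vertical isomorphisms becoming identities via \bref{rem:can_isos_for_vsmccat} and the identifications $K^G = \theta^G_*\uV$ and $U^\W_*(\grave{G}) = \grave{G}_0$. No discrepancies to report.
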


\section{The 2-functoriality of the autoenrichment}

\begin{ParSub}
Applying the Bakovi\'c-Buckley-Grothendieck construction (\S\bref{par:2groth_constr}) to the 2-functor 
$$\eSMCCAT{(-)}:\SMCCAT \rightarrow \TWOCAT\;,\;\;\;\;\V \mapsto \VSMCCAT$$
of \bref{thm:ch_base_vsmccats}, we obtain 2-functor (indeed, a split op-2-fibration) that we shall denote by
$$P:\ENRSMCCAT \rightarrow \SMCCAT\;.$$
The objects of $\ENRSMCCAT$ are pairs $(\V,\M)$ consisting of a symmetric monoidal closed category $\V$ and a symmetric monoidal closed $\V$-category $\M$, and the fibre of $P$ over $\V$ is isomorphic to the 2-category $\VSMCCAT$.

In the present section, we shall show that there is a 2-functor
$$\underline{(-)}:\SMCCAT \rightarrow \ENRSMCCAT$$
sending each object $\V$ of $\SMCCAT$ to the pair $(\V,\uV)$.
\end{ParSub}

\begin{PropSub} \label{prop:comp_assoc_enr_fun}
Let $G:\U \rightarrow \V$, $H:\V \rightarrow \W$ be symmetric monoidal functors between closed symmetric monoidal categories.  Then the symmetric monoidal $\W$-functor
\begin{equation}\label{eqn:comp_of_assoc_enr_smcfuncs}\widegrave{HG}:(HG)_*\uU \rightarrow \uW\end{equation}
is equal to the composite $H_*G_*\uU \xrightarrow{H_*(\grave{G})} H_*\uV \xrightarrow{\grave{H}} \uW$.
\end{PropSub}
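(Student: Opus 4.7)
The plan is to verify the claimed equality at three levels: on objects, on hom-morphisms (so the two 1-cells coincide as $\W$-functors), and then on the symmetric monoidal structure 2-cells (so the equality holds in $\WSMCCAT$). Throughout I use that $(HG)_* = H_*G_*$ by the 2-functoriality of change of base \pbref{par:ch_base}, so the two 1-cells do have the same source $H_*G_*\uU$ and target $\uW$.

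On objects, both 1-cells send $U \in \U$ to $HGU$, by \bref{def:enr_smc_func} and because $H_*$ preserves underlying object-assignments. On homs, I compare transposes. The transpose of $\widegrave{HG}_{U_1U_2}$ is $HG(\Ev) \circ m^{HG}$, and since $HG$ is the composite symmetric monoidal functor, $m^{HG} = H(m^G) \circ m^H$. On the other side, $H_*(\grave{G})_{U_1U_2} = H(\grave{G}_{U_1U_2})$, and the transpose of $\grave{H}_{GU_1,GU_2} \circ H(\grave{G}_{U_1U_2})$ is $\Ev \circ (1 \otimes \grave{H}) \circ (1 \otimes H(\grave{G})) = H(\Ev) \circ m^H \circ (1 \otimes H(\grave{G}))$, using the defining equation of $\grave{H}$. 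By naturality of $m^H$ in its second argument, this becomes $H(\Ev) \circ H(1 \otimes \grave{G}) \circ m^H$, and then applying $H$ to the defining equation $\Ev \circ (1 \otimes \grave{G}) = G(\Ev) \circ m^G$ of $\grave{G}$ \pbref{def:enr_smc_func} converts this into $HG(\Ev) \circ H(m^G) \circ m^H$, matching the first transpose.

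For the symmetric monoidal structures, by \bref{thm:ggrave_enrsmcfun} the monoidal structure 2-cells of $\widegrave{HG}$ are $(m^{HG}, e^{HG}) = (H(m^G) \circ m^H,\: H(e^G) \circ e^H)$, and the monoidal structure 2-cells of $\grave{H}$ are $(m^H, e^H)$. By \bref{par:ch_base_2func_on_smvcats}, the monoidal structure morphisms of $H_*(\grave{G})$ are obtained from those of $\grave{G}$ (namely $m^G$ and $e^G$) by applying the canonical comparison $K^H : \V \to (H_*\uV)_0$ of \bref{def:id_on_obs_strsmfun_kg}, i.e.\ they are $K^H(m^G)$ and $K^H(e^G)$. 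The standard formula for the composite of monoidal 1-cells then gives the $m$-component of $\grave{H} \circ H_*(\grave{G})$ as $\grave{H}_0(K^H(m^G)) \circ m^H$, and by Corollary \bref{thm:recovering_smcfunc_from_its_enr} the triangle $\grave{H}_0 \circ K^H = H$ in $\CAT$ yields $\grave{H}_0(K^H(m^G)) = H(m^G)$, so this composite is $H(m^G) \circ m^H = m^{HG}$, as required; the $e$-component is handled identically.

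The main obstacle I anticipate is keeping straight the various identifications between $\V$, $(\uV)_0$, and $(H_*\uV)_0$, and in particular the role of the comparison functor $K^H$ in how a monoidal structure transports through the change-of-base 2-functor $H_*$. However, Corollary \bref{thm:recovering_smcfunc_from_its_enr} provides precisely the identity $\grave{H}_0 \circ K^H = H$ needed to reduce $\grave{H}_0(K^H(-))$ to $H(-)$ and bring the composite monoidal structure into the explicit form matching $HG$.
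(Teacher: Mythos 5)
Your proposal is correct and follows essentially the same route as the paper: the equality of monoidal structures is obtained exactly as in the paper's proof, via \bref{par:ch_base_2func_on_smvcats} (transport of the structure of $\grave{G}$ along $K^H$) together with the identity $\grave{H}_0 \circ K^H = H$ from \bref{thm:recovering_smcfunc_from_its_enr}, giving $H(m^G)\circ m^H = m^{HG}$ and likewise for the unit constraints. The only difference is that where the paper simply cites \cite[I 6.6]{EiKe} for the equality of the underlying $\W$-functors, you verify it directly by the transpose computation, which is a correct (and self-contained) substitute.
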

\begin{proof}
That the underlying $\W$-functors of these two monoidal $\W$-functors are equal follows from \cite[I 6.6]{EiKe}, so it remains only to show that their monoidal structures coincide.  By definition, the monoidal structure carried by $\grave{G}$ (resp. $\grave{H}$, $\widegrave{HG}$) is exactly the same as that of $G$ (resp. $H$, $HG$).  Also, by \bref{par:ch_base_2func_on_smvcats}, the monoidal structure morphisms carried by $H_*(\grave{G})$ are obtained from those of $\grave{G}$ (i.e., those of $G$) by applying the canonical comparison functor $K^H:\V \rightarrow (H_*\uV)_0$.  Hence the monoidal structure carried by the composite $\grave{H} \circ H_*(\grave{G})$ consists of the composites
$$I_\W \xrightarrow{e^H} HI_\V \xrightarrow{\grave{H}K^He^G} HGI_\U$$
$$HGU \otimes HGU' \xrightarrow{m^H} H(GU \otimes GU') \xrightarrow{\grave{H}K^Hm^G} HG(U \otimes U')\;.$$
But by \bref{thm:recovering_smcfunc_from_its_enr}, $\grave{H}K^He^G = He^G$ and $\grave{H}K^Hm^G = Hm^G$, so the latter composites are equally the monoidal structure morphisms carried by $HG$, which are the same as those carried by $\widegrave{HG}$. 
\end{proof}

\begin{LemSub}\label{thm:lemma_for_self_enr_on_2-cells}
Given a 2-cell $\alpha:G \Rightarrow H:\W \rightarrow \V$ in $\SMCCAT$, the components of $\alpha$ constitute a 2-cell
\begin{equation}\label{eq:lemma_for_self_enr_on_2-cells}
\xymatrix{
G_*\uW \ar[dr]_{\grave{G}}^{}="src1" \ar[rr]^{\alpha_*\uW} &         & H_*\uW \ar@{}"src1";{}|(.2){}="tgt2"|(.5){}="src2" \ar[dl]^{\grave{H}} \\
                                                         & \uV     &
  \ar@{<=}"src2";"tgt2"^{\grave{\alpha}}
}
\end{equation}
in $\VSMCCAT$. 
\end{LemSub}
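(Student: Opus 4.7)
The plan is to take $\grave{\alpha}$ to be the $\V$-natural transformation $\grave{G} \Rightarrow \grave{H} \circ (\alpha_*\uW)$ whose component at $W \in \W$ is the name $[\alpha_W]:I_\V \to \uV(GW,HW)$ of the corresponding component of $\alpha$, and then to verify the two required properties: $\V$-naturality and monoidality. (No separate symmetry condition is needed, since symmetry is a property of 1-cells only, per \bref{par:2cats_of_psmons}.)

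For $\V$-naturality, I would observe that since $(\alpha_*\uW)_{W_1 W_2} = \alpha_{\uW(W_1,W_2)}$ by \bref{par:ch_base}, the required equation amounts to the commutativity, for each $W_1,W_2 \in \W$, of the diagram in $\V$ obtained by comparing $\grave{G}_{W_1 W_2}$ post-composed with $\alpha_{W_2}$ against $\alpha_{\uW(W_1,W_2)}$ followed by $\grave{H}_{W_1 W_2}$ pre-composed with $\alpha_{W_1}$. Taking transposes under $GW_1 \otimes (-) \dashv \uV(GW_1,-)$ and invoking the definition of $\grave{G}$ and $\grave{H}$ from \bref{def:enr_smc_func}, this reduces to the equality of the two composites
\[ GW_1 \otimes G\uW(W_1,W_2) \xrightarrow{m^G} G(W_1 \otimes \uW(W_1,W_2)) \xrightarrow{G\Ev} GW_2 \xrightarrow{\alpha_{W_2}} HW_2 \]
and
\[ GW_1 \otimes G\uW(W_1,W_2) \xrightarrow{\alpha_{W_1} \otimes \alpha_{\uW(W_1,W_2)}} HW_1 \otimes H\uW(W_1,W_2) \xrightarrow{m^H} H(W_1 \otimes \uW(W_1,W_2)) \xrightarrow{H\Ev} HW_2. \]
This is then a short diagram chase combining the monoidality of $\alpha$ (which gives $m^H \circ (\alpha_{W_1} \otimes \alpha_{\uW(W_1,W_2)}) = \alpha_{W_1 \otimes \uW(W_1,W_2)} \circ m^G$) with the naturality of $\alpha$ applied to the morphism $\Ev:W_1 \otimes \uW(W_1,W_2) \to W_2$ in $\W$.

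For monoidality of $\grave{\alpha}$, I would appeal to the proof of \bref{thm:ggrave_enrsmcfun}, which shows that the monoidal structure $\W$-natural transformations $e^{\grave{G}},m^{\grave{G}}$ carried by $\grave{G}$ are, componentwise, precisely $e^G,m^G$, and similarly for $\grave{H}$. Moreover $\alpha_*\uW$ is strict symmetric monoidal by \bref{thm:ch_base_vsmccats} and \bref{twofunc_psmon}, so the composite $\grave{H} \circ (\alpha_*\uW)$ carries exactly the monoidal structure of $H$. Hence the unit and multiplication axioms demanding that $\grave{\alpha}$ be a monoidal $\V$-natural transformation coincide pointwise with the corresponding axioms asserting that $\alpha:G \Rightarrow H$ is a 2-cell in $\SMCCAT$, which hold by hypothesis.

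The only real obstacle is the $\V$-naturality step; once one passes to transposes under the closed structure and writes out the explicit description of $\grave{G}$ and $\grave{H}$, the remaining equation is a standard pasting argument and the monoidality step is essentially tautological.
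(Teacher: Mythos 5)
Your proposal is correct and follows essentially the same route as the paper's proof: $\V$-naturality is checked by transposing the enriched naturality square under $GW_1\otimes(-) \dashv \uV(GW_1,-)$ and reducing it to the monoidality of $\alpha$ together with its naturality at $\Ev$, and monoidality of $\grave{\alpha}$ is obtained by noting that $\grave{G},\grave{H}$ carry exactly the monoidal structures of $G,H$ while $\alpha_*\uW$ is identity-on-objects strict symmetric monoidal, so the axioms coincide componentwise with those for $\alpha$. This matches the paper's argument in both structure and detail.
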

\begin{proof}
Firstly, to show that the components of $\alpha$ constitute a $\V$-natural transformation $\grave{\alpha}$, we must show that for each pair of objects $W,X \in \W$, the diagram
$$
\xymatrix{
G\uW(W,X) \ar[d]_{\grave{G}_{WX}} \ar[r]^{\alpha_{\uW(W,X)}} & H\uW(W,X) \ar[r]^{\grave{H}_{WX}} & \uV(HW,HX) \ar[d]^{\uV(\alpha_W,1)}\\
\uV(GW,GX) \ar[rr]_{\uV(1,\alpha_X)} & & \uV(GW,HX)
}
$$
commutes.  By the definition of $\grave{G}$ and $\grave{H}$, the transposes of the two composites in this diagram are obtained as the two composites around the periphery of the following diagram
$$
\xymatrix{
GW\otimes G\uW(W,X) \ar[r]^{m^G} \ar[d]_{\alpha\otimes\alpha} & G(W\otimes\uW(W,X)) \ar[d]^\alpha \ar[r]^(.7){G\Ev} & GX \ar[d]^\alpha\\
HW\otimes H\uW(W,X) \ar[r]_{m^H} & H(W\otimes\uW(W,X)) \ar[r]_(.7){H\Ev} & HX
}
$$
which commutes since $\alpha$ is a monoidal natural transformation.  It remains only to show that the resulting $\V$-natural transformation $\grave{\alpha}$ is monoidal, but by definition the monoidal structures carried by $\grave{G}$ and $\grave{H}$ are the same as those carried by $G$ and $H$, and $\alpha_*\uW$ is strict symmetric monoidal and identity-on-objects, so the diagrammatic equations expressing the monoidality of $\grave{\alpha}$ are exactly the same as those for $\alpha$ itself.
\end{proof}

\begin{ThmSub}\label{thm:2func_self_enr}
There is a 2-functor
$$\underline{(-)}:\SMCCAT \rightarrow \ENRSMCCAT$$
sending each object $\V$ of $\SMCCAT$ to the pair $(\V,\uV)$, and sending each 1-cell $G:\W \rightarrow \V$ in $\SMCCAT$ to $(G,\grave{G}):(\W,\uW) \rightarrow (\V,\uV)$, where $\grave{G}:G_*(\uW) \rightarrow \uV$ is the symmetric monoidal $\V$-functor defined in \S \bref{sec:enr_smcfunc}.
\end{ThmSub}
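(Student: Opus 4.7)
The plan is to define $\underline{(-)}$ on objects, $1$-cells, and $2$-cells and then to verify the 2-functor axioms by combining the explicit Grothendieck-style description of $\ENRSMCCAT$ recalled in \bref{par:2groth_constr} with the results of the preceding sections. On objects set $\underline{\V} := (\V,\uV)$, well-defined by \bref{v_is_a_vsmc_vcat}. On a $1$-cell $G:\W \to \V$ in $\SMCCAT$ set $\underline{G} := (G,\grave{G})$, where $\grave{G}:G_*\uW \to \uV$ is the symmetric monoidal $\V$-functor of \bref{thm:ggrave_enrsmcfun}. On a $2$-cell $\alpha:G \Rightarrow H$ in $\SMCCAT$ set $\underline{\alpha} := (\alpha,\grave{\alpha})$, where $\grave{\alpha}:\grave{G} \Rightarrow \grave{H} \circ \alpha_*\uW$ is the monoidal $\V$-natural transformation produced by \bref{thm:lemma_for_self_enr_on_2-cells}; this has precisely the shape prescribed in \bref{par:2groth_constr} for a $2$-cell of $\ENRSMCCAT$ lying over $\alpha$.

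Next I would verify the $1$-cell axioms. Preservation of composition is handled by Proposition \bref{prop:comp_assoc_enr_fun}: for composable $G:\U \to \V$ and $H:\V \to \W$, the composite $\underline{H} \circ \underline{G}$ in $\ENRSMCCAT$ has first component $HG$ and second component $\grave{H} \circ H_*(\grave{G})$ by the formula in \bref{par:2groth_constr}, and the latter equals $\widegrave{HG}$ by \bref{prop:comp_assoc_enr_fun}. Preservation of identities reduces to checking that $\grave{1_\V} = 1_{\uV}$ as monoidal $\V$-functors, which is immediate from \bref{def:enr_smc_func}: since $m^{1_\V}$ and $e^{1_\V}$ are identities, the defining transpose of $V_1 \otimes \uV(V_1,V_2) \xrightarrow{\Ev} V_2$ is the identity arrow on $\uV(V_1,V_2)$, and the monoidal structure carried by $\grave{1_\V}$ coincides with that of $1_\V$.

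For the $2$-cell axioms (identity, vertical composition, and compatibility with whiskering by $1$-cells), the crucial point is that every change-of-base $\V$-functor of the form $\alpha_*\uW$ is identity-on-objects and that $2$-cells in $\VSMCCAT$ are determined by their components. For vertical composition of $\underline\alpha$ and $\underline\beta$ with $\alpha:F \Rightarrow G$ and $\beta:G \Rightarrow H$, the pasting prescribed in \bref{par:2groth_constr} yields a $2$-cell in $\VSMCCAT$ whose component at each $W$ is the composite $\beta_W \circ \alpha_W = (\beta \cdot \alpha)_W$, coinciding with the component of $\widegrave{\beta \cdot \alpha}$; the identity case is analogous. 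Similarly, for $1$-cells $F:\U \to \W$ and $K:\V \to \X$ in $\SMCCAT$, the whiskering pastings of \bref{par:2groth_constr} reduce componentwise to $(\alpha F)_U = \alpha_{FU}$ and $(K\alpha)_W = K(\alpha_W)$, once one notes that (i)~the naturality squares of the $2$-natural transformation $\alpha_*$ applied to $\grave{F}$ are strict, and (ii)~by \bref{par:ch_base_2func_on_smvcats} the components of $K_*(\grave{\alpha})$ are obtained from those of $\grave\alpha$ by applying $K$. Both pastings then match $\widegrave{\alpha F}$ and $\widegrave{K\alpha}$ componentwise, hence on the nose.

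The main obstacle is bookkeeping: the vertical-composition and whiskering formulas of \bref{par:2groth_constr} are nested pastings within the fibre 2-categories, and one must carefully unwind them. However, the special feature of the present situation --- namely that each $\alpha_*\uW$ is identity-on-objects with strict symmetric monoidal components on homs --- collapses these pastings to routine componentwise verifications in $\V$, so that no genuinely new calculation is needed beyond \bref{prop:comp_assoc_enr_fun} and \bref{thm:lemma_for_self_enr_on_2-cells}.
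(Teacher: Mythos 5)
Your proposal is correct and follows essentially the same route as the paper's proof: composition of 1-cells via \bref{prop:comp_assoc_enr_fun}, 2-cells via \bref{thm:lemma_for_self_enr_on_2-cells}, and the vertical-composition and whiskering axioms reduced to componentwise checks exploiting that the change-of-base $\V$-functors $\alpha_*\uW$ are identity-on-objects. The only small imprecision is in the post-whiskering case: by \bref{par:ch_base_2func_on_smvcats} the components of $K_*(\grave{\alpha})$ are obtained from those of $\grave{\alpha}$ by applying the comparison functor $\theta^{K}_*\uV$ (not $K$ itself), and one then needs \bref{thm:recovering_smcfunc_from_its_enr} to conclude that the subsequent whiskering with $\grave{K}$ yields $\grave{K}_0\,\theta^{K}_*\uV(\alpha_W) = K(\alpha_W)$, exactly as in the paper.
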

\begin{proof}
By \bref{prop:comp_assoc_enr_fun}, the given assignment on 1-cells preserves composition, and it is immediate that it preserves identity 1-cells and so is functorial.  Given a 2-cell $\alpha:G \Rightarrow H:\W \rightarrow \V$ in $\K := \SMCCAT$, the associated 2-cell $\grave{\alpha}$ given in \bref{thm:lemma_for_self_enr_on_2-cells} yields an associated 2-cell $(\alpha,\grave{\alpha}):(\W,\uW) \rightarrow (\V,\uV)$ in $\F := \ENRSMCCAT$.  The components of $\grave{\alpha}$ are the same as those of $\alpha$, and the $\V$-functor $\alpha_*\uW$ appearing in \eqref{eq:lemma_for_self_enr_on_2-cells} is identity-on-objects, so with reference to the definition of vertical composition in $\F$, it is immediate that the given assignment on 2-cells preserves vertical composition, and it clearly preserves identity 2-cells.  Hence it suffices to show that it preserves whiskering.  Firstly, let
\begin{equation}\label{eqn:whiskering_1}
\xymatrix{
\W \ar[r]^F & \V \ar@/^1.5ex/[rr]^G="s1" \ar@/_1.5ex/[rr]_H="t1" & & \U
  \ar@{}"s1";"t1"|(.35){}="s2"|(.65){}="t2"
  \ar@{=>}"s2";"t2"^\alpha
}
\end{equation}
in $\K$.  On the one hand, we can take the composite 2-cell $\alpha F$ in $\K$ and then apply $\underline{(-)}$ to obtain the 2-cell $(\alpha F,\widegrave{\alpha F}):(GF,\widegrave{GF}) \Rightarrow (HF,\widegrave{HF}):(\W,\uW) \rightarrow (\U,\uU)$ in $\F$.  On the other hand, we can apply $\underline{(-)}$ and then take the composite 2-cell
\begin{equation}\label{eqn:whiskering_2}
\xymatrix{
(\W,\uW) \ar[r]^{(F,\grave{F})} & (\V,\uV) \ar@/^1.5ex/[rr]^{(G,\grave{G})}="s1" \ar@/_1.5ex/[rr]_{(H,\grave{H})}="t1" & & (\U,\uU)
  \ar@{}"s1";"t1"|(.35){}="s2"|(.65){}="t2"
  \ar@{=>}"s2";"t2"^{(\alpha,\grave{\alpha})}
}
\end{equation}
in $\F$, which by definition consists of the 2-cell $\alpha F:GF \Rightarrow HF$ in $\K$ together with the composite 2-cell
$$
\xymatrix{
G_*F_*(\uW) \ar[d]_{(\alpha F)_*\uW\:=\:\alpha_*F_*\uW} \ar[r]^{G_*(\grave{F})} & G_*(\uV) \ar@{}[dl]|(.45){}="s3"|(.55){}="t3" \ar[d]_{\alpha_* \uV} \ar[drr]^{\grave{G}}="s1" & & \\
H_*F_*(\uW) \ar[r]_{H_*(\grave{F})} & H_*(\uV) \ar[rr]_{\grave{H}} \ar@{}"s1";{}|(.4){}="s2"|(.6){}="t2" & & \uU
  \ar@{=>}"s2";"t2"_{\grave{\alpha}}
  \ar@{=}"s3";"t3"
}
$$
in $\eSMCCAT{\U}$.  But the components of $\grave{\alpha}$ are exactly those of $\alpha$ itself, so since $G_*(\grave{F})$ is given as $F$ on objects, the components of the latter composite 2-cell are the same as those of $\alpha F$, which are equally those of $\widegrave{\alpha F}$.  Hence the composite 2-cell \eqref{eqn:whiskering_2} is equal to $(\alpha F,\widegrave{\alpha F})$, as needed.

Next, let
$$
\xymatrix{
\W \ar@/^1.5ex/[rr]^F="s1" \ar@/_1.5ex/[rr]_G="t1" & & \V \ar[r]^H & \U
  \ar@{}"s1";"t1"|(.35){}="s2"|(.65){}="t2"
  \ar@{=>}"s2";"t2"^\alpha
}
$$
in $\K$.  On the one hand, we can take the composite 2-cell $H\alpha$ in $\K$, and then apply $\underline{(-)}$ to obtain a 2-cell $(H\alpha,\widegrave{H\alpha}):(HF,\widegrave{HF}) \Rightarrow (HG,\widegrave{HG}):(\W,\uW) \rightarrow (\U,\uU)$ in $\F$.  On the other hand, we can take the composite 2-cell
\begin{equation}\label{eqn:whiskering_3}
\xymatrix{
(\W,\uW) \ar@/^1.5ex/[rr]^{(F,\grave{F})}="s1" \ar@/_1.5ex/[rr]_{(G,\grave{G})}="t1" & & (\V,\uV) \ar[r]^{(H,\grave{H})} & (\U,\uU)
  \ar@{}"s1";"t1"|(.35){}="s2"|(.65){}="t2"
  \ar@{=>}"s2";"t2"^{(\alpha,\grave{\alpha})}
}
\end{equation}
in $\F$, which consists of the composite 2-cell $H\alpha$ in $\K$ together with the composite 2-cell
\begin{equation}\label{eqn:whiskering_4}
\xymatrix{
H_*F_*(\uW) \ar[d]_{(H\alpha)_*\uW\:=\:H_*\alpha_*\uW} \ar[drr]^{H_*(\grave{F})}="s1" & & & \\
H_*G_*(\uW) \ar[rr]_{H_*(\grave{G})} \ar@{}"s1";{}|(.4){}="s2"|(.6){}="t2" & & H_*(\uV) \ar[r]^{\grave{H}} & \uU
  \ar@{=>}"s2";"t2"_{H_*(\grave{\alpha})}
}
\end{equation}
in $\eSMCCAT{\U}$.  The component of this 2-cell at each object $W$ of $H_*F_*(\uW)$ (equivalently, of $\W$) is obtained by applying $\grave{H}$ to the component of $H_*(\grave{\alpha})$ at $W$, which in turn is obtained by applying $K^H:\V \rightarrow (H_*\uV)_0$ \pbref{def:id_on_obs_strsmfun_kg} to that of $\alpha$ (by \bref{par:ch_base_2func_on_smvcats}).  Hence, using \bref{thm:recovering_smcfunc_from_its_enr}, the resulting component of the 2-cell $\eqref{eqn:whiskering_4}$ at $W$ is
$$\grave{H}K^H\alpha_W = H\alpha_W = \widegrave{H\alpha}_W\;.$$
Hence the 2-cell \eqref{eqn:whiskering_3} is equal to $(H\alpha,\widegrave{H\alpha})$, as needed.
\end{proof}

\section{The 2-functoriality of the induced enrichment over a fixed base}

Whereas the previous section shows that the autoenrichment $\V \mapsto \uV$ extends to a 2-functor $\underline{(-)}:\SMCCAT \rightarrow \ESMCCAT$ valued in the 2-category of symmetric monoidal closed categories enriched over various bases, we show in the present section that for a fixed base $\V$, there is an induced 2-functor
$$\Enr_\V:\SMCCAT\sslash\V \rightarrow \VSMCCAT\sslash\uV$$
between the \textit{lax slice 2-categories} \pbref{def:lax_slice} over $\V$.  Whereas it is immediate that the autoenrichment 2-functor induces a 2-functor $\SMCCAT\sslash \uV \rightarrow (\ESMCCAT)\sslash \uV$ valued in a lax slice of $\ESMCCAT$, the challenge that remains is to (2-functorially) map the latter lax slice into the lax slice over $\uV$ \textit{in the fibre} $\VSMCCAT$.  This we accomplish by means of a general lemma on split op-2-fibrations \pbref{thm:func_from_laxsl_in_splop2fibr_to_laxsl_in_fibre}.

\begin{DefSub}\label{def:lax_slice}
The \textit{lax slice 2-category} $\K \sslash K$ over an object $K$ of a 2-category $\K$ has objects all 1-cells $g:L \rightarrow K$ with codomain $K$, written as pairs $(L,g)$.  A 1-cell $(s,\sigma):(L,g) \rightarrow (M,h)$ in $\K \sslash K$ consists of a 1-cell $s:L \rightarrow M$ and a 2-cell $\sigma:g \Rightarrow hs$.  A 2-cell $\alpha:(s,\sigma) \Rightarrow (t,\tau):(L,g) \rightarrow (M,h)$ is simply a 2-cell $\alpha:s \Rightarrow t$ such that $\alpha \circ \sigma = \tau$, where $\alpha \circ \sigma$ denotes the pasted 2-cell $h\alpha \cdot \sigma$.
\end{DefSub}

\begin{LemSub}\label{thm:func_from_laxsl_in_splop2fibr_to_laxsl_in_fibre}
Let $P:\F \rightarrow \K$ be a split op-2-fibration, and let $A$ be an object of $\F$.  Then there is an associated 2-functor
$$\F \sslash A \rightarrow \F_{PA} \sslash A$$
from the lax slice $\F \sslash A$ over $A$ in $\F$ to the lax slice $\F_{PA} \sslash A$ over $A$ in the fibre $\F_{PA}$ of $P$ over $PA$.
\end{LemSub}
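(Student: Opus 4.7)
The plan is to construct the 2-functor $\Lambda:\F \sslash A \to \F_{PA} \sslash A$ explicitly by using both the cocartesian 1-cells and the cartesian 2-cells furnished by the split op-2-fibration $P$; write $K := PA$ for brevity. On objects: given $(L,g) \in \F \sslash A$, form the designated cocartesian 1-cell $\psi(Pg,L):L \to (Pg)_*L$; since $1_K \circ Pg = Pg$, the cocartesian universal property produces a unique 1-cell $\bar{g}:(Pg)_*L \to A$ in $\F$ with $\bar{g} \circ \psi(Pg,L) = g$ and $P\bar{g} = 1_K$, so $\bar{g}$ lies in $\F_K$; set $\Lambda(L,g) := ((Pg)_*L,\bar{g})$.

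For a 1-cell $(s,\sigma):(L,g) \to (M,h)$, I would exploit the two universal properties in succession. First, the designated cartesian 2-cell $\varphi(P\sigma,\psi(Ph,M) \circ s):\tilde{s} \Rightarrow \psi(Ph,M) \circ s$ over $P\sigma$ supplies a 1-cell $\tilde{s}:L \to (Ph)_*M$ lying over $Pg$. Next, cocartesian extension of $\tilde{s}$ along $\psi(Pg,L)$ over $1_K$ yields a unique $\bar{s}:(Pg)_*L \to (Ph)_*M$ in $\F_K$ with $\bar{s} \circ \psi(Pg,L) = \tilde{s}$. For the accompanying 2-cell, observe that $\bar{h} \circ \varphi(P\sigma,\psi(Ph,M) \circ s)$ is still cartesian over $P\sigma$ (by the whiskering clause in the definition of op-2-fibration), with codomain $\bar{h} \circ \psi(Ph,M) \circ s = hs$; applying its cartesian universal property to the 2-cell $\sigma:g \Rightarrow hs$ produces a unique $\sigma':g \Rightarrow \bar{h}\tilde{s}$ over $1_{Pg}$. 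Cocartesian extension of $\sigma'$ along $\psi(Pg,L)$ over the identity 2-cell on $1_K$ then delivers $\bar{\sigma}:\bar{g} \Rightarrow \bar{h}\bar{s}$ in $\F_K$; set $\Lambda(s,\sigma) := (\bar{s},\bar{\sigma})$.

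For a 2-cell $\alpha:(s,\sigma) \Rightarrow (t,\tau)$ in $\F \sslash A$, the relation $h\alpha \cdot \sigma = \tau$ together with the cartesian universal property of $\varphi(P\tau,\psi(Ph,M) \circ t)$ produces a unique $\tilde{\alpha}:\tilde{s} \Rightarrow \tilde{t}$ over $1_{Pg}$ factoring the composite $(\psi(Ph,M)\alpha) \cdot \varphi(P\sigma,\psi(Ph,M) \circ s)$; cocartesian extension of $\tilde{\alpha}$ along $\psi(Pg,L)$ then gives $\bar{\alpha}:\bar{s} \Rightarrow \bar{t}$ in $\F_K$, and the required identity $\bar{h}\bar{\alpha} \cdot \bar{\sigma} = \bar{\tau}$ follows by uniqueness of cocartesian extensions, since both sides restrict along $\psi(Pg,L)$ to the same 2-cell $\bar{h}\tilde{\alpha} \cdot \sigma' = \tau'$. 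The 2-functoriality axioms — preservation of identities, composition of 1-cells, vertical composition and whiskering of 2-cells — all reduce, after pre-whiskering with the relevant cocartesian 1-cell $\psi(Pg,L)$, to equalities of 2-cells factored through designated cartesian 2-cells; each such equality is forced by uniqueness in the cartesian universal property together with the splitness hypothesis (closure of designated cocartesian 1-cells and cartesian 2-cells under composition and whiskering), and then propagates to the required equality in $\F_{PA} \sslash A$ via uniqueness in the cocartesian universal property. I expect the main obstacle to be the sheer bookkeeping involved in tracking which 1- and 2-cells lie over which base data through these two successive applications of universal properties, rather than any conceptual difficulty.
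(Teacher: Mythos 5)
Your construction is correct and is essentially the paper's own proof: the same object assignment via a designated cocartesian lift followed by the unique extension into the fibre, the same 1-cell assignment obtained from the designated cartesian 2-cell over $P\sigma$, a cocartesian extension, and the lifting of $\sigma$ through the whiskered cartesian 2-cell $\bar{h}\circ\varphi(P\sigma,\psi(Ph,M)\circ s)$, and the same treatment of 2-cells and of the functoriality axioms via uniqueness of solutions together with the splitness hypothesis. The only difference is one of detail: the paper spells out why the cocartesian extension of $\sigma'$ has the correct domain and codomain ($\bar{g}$ and $\bar{h}\bar{s}$), which you assert implicitly, but this follows from the same uniqueness argument you already use elsewhere.
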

\begin{proof}
Given an object $(B,f)$ of $\F \sslash A$, so that $f:B \rightarrow A$ is a 1-cell in $\F$, we have a cocartesian 1-cell $\psi(Pf,B):B \rightarrow (Pf)_*(B)$ over $Pf$ and an extension problem $(\psi(Pf,B),f,1_{PA})$, which therefore has a unique solution $f'$, i.e. a 1-cell $f':(Pf)_*(B) \rightarrow A$ in the fibre over $PA$ such that $f' \circ \psi(Pf,B) = f$.  The 2-functor to be defined shall send the object $(B,f)$ to the object $((Pf)_*(B),f')$ of $\F_{PA} \sslash A$.

Given a 1-cell $(s,\beta):(B,f) \rightarrow (C,g)$ in $\F \sslash A$, so that $s:B \rightarrow C$ is a 1-cell and $\beta:f \Rightarrow gs$ a 2-cell, we claim that there are unique $\beta_0$ and $s_\beta$ as in the left half of the diagram
$$
\xymatrix{
B \ar[d]_s \ar[r]^(.35){\psi(Pf,B)} & (Pf)_*(B) \ar[d]_{s_\beta} \ar@/^2ex/[dr]^{f'}="s2" \ar@{}[dl]|(.4){}="s1"|(.6){}="t1" & \\
C \ar[r]_(.35){\psi(Pg,C)} & (Pg)_*(C) \ar[r]_{g'} \ar@{}"s2";{}|(.4){}="s3"|(.6){}="t3" & A
  \ar@{=>}"s1";"t1"_{\beta_0}
  \ar@{=>}"s3";"t3"^{\beta'}
}
$$
such that
\begin{enumerate}
\item $\beta_0$ is a designated cartesian 2-cell over $P\beta$, and
\item $s_\beta$ lies in $\F_{PA}$.
\end{enumerate}
Indeed, letting $q := \psi(Pg,C) \circ s$ and taking $\beta_0 := \varphi(P\beta,q):(P\beta)^*(q) \Rightarrow q$, we have an extension problem $(\psi(Pf,B),(P\beta)^*(q),1_{PA})$, which has a unique solution $s_\beta$, so the claim is proved.  Further, there is a unique 2-cell $\beta'$ as in the diagram such that 
\begin{enumerate}
\item[3.] $\beta'$ lies in $\F_{PA}$, and
\item[4.] the pasted 2-cell equals $\beta$.
\end{enumerate}
Indeed, we have a lifting problem $(g' \circ \beta_0,\beta,1_{Pf})$, but since $\beta_0$ is cartesian, the whiskered 2-cell $g' \circ \beta_0$ is cartesian, so this lifting problem has a unique solution $\tau:f \Rightarrow g' \circ (P\beta)^*(q)$.  But then we have an extension problem $(\psi(Pf,B),\tau, 1_{1_{PA}})$ with a unique solution $\beta':u \Rightarrow v:(Pf)_*(B) \rightarrow A$.  Hence $u$ and $v$ are necessarily solutions to the lifting problems $(\psi(Pf,B),f,1_{PA})$ and $(\psi(Pf,B),g' \circ (P\beta)^*(q),1_{PA})$, respectively, but these also have solutions $f'$ and $g' \circ s_\beta$, so in fact $u = f'$ and $v = g' \circ s_\beta$.  One readily finds that $\beta'$ is equivalently characterized by 3 and 4.  We define the needed 2-functor on 1-cells by sending $(s,\beta)$ to $(s_\beta,\beta'):((Pf)_*(B),f') \rightarrow ((Pg)_*(C),g')$.

Given a 2-cell $\alpha:(s,\beta) \Rightarrow (t,\gamma):(B,f) \rightarrow (C,g)$ in $\F \sslash A$, so that $\alpha:s \Rightarrow t$ and the 2-cell obtained by pasting $\alpha$ and $\beta$ equals $\gamma$, we claim that there is a unique 2-cell $\alpha_{\gamma\beta}$ in $\F_{PA}$ such that the following pasted 2-cells are equal
$$
\xymatrix{
B \ar@/_2ex/[d]_t \ar[r]^(.35){\psi(Pf,B)} & (Pf)_*(B) \ar@{}[dl]|(.5){}="s1"|(.7){}="t1" \ar@/_2ex/[d]_{t_\gamma}="t2" \ar@/^2ex/[d]^{s_\beta}="s2" \\
C \ar[r]_(.35){\psi(Pg,C)} & (Pg)_*(C)
  \ar@{=>}"s1";"t1"_{\gamma_0}
  \ar@{}"s2";"t2"|(.4){}="s3"|(.6){}="t3"
  \ar@{=>}"s3";"t3"_{\alpha_{\gamma\beta}}
}\;\;\;\;
\xymatrix{
B \ar@/_2ex/[d]_t="t2" \ar@/^2ex/[d]^s="s2" \ar[r]^(.35){\psi(Pf,B)} & (Pf)_*(B) \ar@{}[dl]|(.4){}="s1"|(.6){}="t1" \ar@/^2ex/[d]^{s_\beta} \\
C \ar[r]_(.35){\psi(Pg,C)} & (Pg)_*(C)
  \ar@{=>}"s1";"t1"_{\beta_0}
  \ar@{}"s2";"t2"|(.4){}="s3"|(.6){}="t3"
  \ar@{=>}"s3";"t3"_{\alpha}
}
$$
Indeed, letting $\rho$ denote the rightmost of these composite 2-cells, we have a lifting problem $(\gamma_0,\rho,1_{Pf})$ with a unique solution $\alpha':s_\beta \circ \psi(Pf,B) \Rightarrow t_\gamma \circ \psi(Pf,B)$, and we then obtain an extension problem $(\psi(Pf,B),\alpha',1_{1_{PA}})$ with a unique solution $\alpha_{\gamma\beta}$, which is equally the unique 2-cell in $\F_{PA}$ making the given pasted 2-cells equal.  We define the needed 2-functor on 2-cells by sending $\alpha:(s,\beta) \Rightarrow (t,\gamma)$ to $\alpha_{\gamma\beta}:(s_\beta,\beta') \Rightarrow (t_\gamma,\gamma')$.  One readily verifies that the latter is indeed a 2-cell in $\F_{PA}\sslash A$ by using the defining properties of $\alpha_{\gamma\beta}$, $\beta'$, and $\gamma'$ to show that the 2-cell obtained by pasting $\alpha_{\gamma\beta}$ with $\beta'$ equals $\gamma'$.

Using the fact that the designated cartesian 2-cells are closed under pasting, one now readily shows that the given assignment preserves composition of 1-cells, identity 1-cells, vertical composition of 2-cells, identity 2-cells, and whiskering, and hence is 2-functorial. 
\end{proof}

\begin{ParSub}\label{par:func_from_laxsl_in_grothconstr_to_laxsl_in_fibre}
In the case that $\F$ is the split op-2-fibration $\int\Phi$ associated to a 2-functor $\Phi:\K \rightarrow \TWOCAT$, the preceding proposition \pbref{thm:func_from_laxsl_in_splop2fibr_to_laxsl_in_fibre} yields a composite 2-functor
$$\F \sslash A \longrightarrow \F_{A^{\tinyminus}} \sslash A \overset{\sim}{\longrightarrow} \Phi(A^{\tinyminus})\sslash A^{\tinyplus}$$
associated to each object $A = (A^{\tinyminus},A^{\tinyplus})$ of $\int\Phi$.  We can describe this 2-functor explicitly in terms of $\Phi$, as follows.  It sends each object $(B,f)$ of its domain to $(f^{\tinyminus}_*(B^{\tinyplus}),f^{\tinyplus})$, each 1-cell $(s,\beta):(B,f) \rightarrow (C,g)$ to $(g^{\tinyminus}_*(s^{\tinyplus}) \circ \beta^{\tinyminus}_*B^{\tinyplus},\beta^{\tinyplus}):(f^{\tinyminus}_*(B^{\tinyplus}),f^{\tinyplus}) \rightarrow (g^{\tinyminus}_*(C^{\tinyplus}),g^{\tinyplus})$, and each 2-cell $\alpha:(s,\beta) \rightarrow (t,\gamma)$ to $g^{\tinyminus}_*(\alpha^{\tinyplus}) \circ \beta^{\tinyminus}_*B^{\tinyplus}:(g^{\tinyminus}_*(s^{\tinyplus}) \circ \beta^{\tinyminus}_*B^{\tinyplus},\beta^{\tinyplus}) \Rightarrow (g^{\tinyminus}_*(t^{\tinyplus}) \circ \gamma^{\tinyminus}_*B^{\tinyplus},\gamma^{\tinyplus})$.
\end{ParSub}

\begin{ThmSub}\label{thm:enr_v}
Given a symmetric monoidal closed category $\V$, there is a 2-functor
$$\Enr_\V:\SMCCAT \sslash \V \rightarrow \VSMCCAT \sslash \uV$$
given on objects by sending $G:\M \rightarrow \V$ to $\grave{G}:G_*\uM \rightarrow \uV$. 
\end{ThmSub}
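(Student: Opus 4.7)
The plan is to realize $\Enr_\V$ as the composite of two 2-functors, both obtained from machinery already established earlier in the excerpt.

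First, since every 2-functor induces a 2-functor between corresponding lax slices, the autoenrichment 2-functor $\underline{(-)} : \SMCCAT \to \ENRSMCCAT$ of \bref{thm:2func_self_enr} yields a 2-functor
$$\SMCCAT \sslash \V \longrightarrow \ENRSMCCAT \sslash (\V, \uV)$$
sending each object $G : \M \to \V$ to the 1-cell $(G, \grave{G}) : (\M, \uM) \to (\V, \uV)$ in $\ENRSMCCAT$. Second, since the 2-functor $P : \ENRSMCCAT \to \SMCCAT$ is a split op-2-fibration whose fibre over $\V$ is canonically identified with $\VSMCCAT$, Lemma \bref{thm:func_from_laxsl_in_splop2fibr_to_laxsl_in_fibre} applied at the object $A := (\V, \uV)$ yields a 2-functor
$$\ENRSMCCAT \sslash (\V, \uV) \longrightarrow \VSMCCAT \sslash \uV\;.$$

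I will define $\Enr_\V$ to be the composite of these two 2-functors, so that its 2-functoriality is immediate from that of its constituents. To verify the asserted description on objects, I invoke the explicit recipe of \bref{par:func_from_laxsl_in_grothconstr_to_laxsl_in_fibre}: an object $G : \M \to \V$ is sent first to $(G, \grave{G}) : (\M, \uM) \to (\V, \uV)$, and thence to the pair $(G_* \uM, \grave{G})$, i.e.\ to the 1-cell $\grave{G} : G_* \uM \to \uV$ in $\VSMCCAT \sslash \uV$, as required.

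There is no substantive obstacle at this stage: all of the non-trivial content has been absorbed into \bref{thm:2func_self_enr}, which packaged the 2-functoriality of the autoenrichment across varying bases, and into \bref{thm:func_from_laxsl_in_splop2fibr_to_laxsl_in_fibre}, which descends from a lax slice in a split op-2-fibration to a lax slice inside a fibre. The only bookkeeping concern is that the canonical identification of the fibre $P^{-1}(\V)$ with $\VSMCCAT$ be compatible with the lax-slice construction, which is immediate from the explicit description of the Bakovi\'c--Buckley--Grothendieck construction recalled in \S\bref{par:2groth_constr}.
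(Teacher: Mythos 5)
Your proof is correct and follows exactly the same route as the paper: the autoenrichment 2-functor of \bref{thm:2func_self_enr} induces a 2-functor between lax slices, which is then composed with the descent-to-the-fibre 2-functor supplied by \bref{thm:func_from_laxsl_in_splop2fibr_to_laxsl_in_fibre} via the explicit form recorded in \bref{par:func_from_laxsl_in_grothconstr_to_laxsl_in_fibre}. Your additional check of the object-level description agrees with the paper's Remark \bref{rem:expl_desc_enrv}.
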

\begin{proof}
The 2-functor $\underline{(-)}:\SMCCAT \rightarrow \ENRSMCCAT$ of \bref{thm:2func_self_enr} induces a 2-functor 
$$\SMCCAT \sslash \V \rightarrow \left(\ENRSMCCAT\right) \sslash (\V,\uV)$$
between the lax slices.  Hence by \bref{par:func_from_laxsl_in_grothconstr_to_laxsl_in_fibre} we obtain a composite 2-functor
$$\SMCCAT \sslash \V \longrightarrow \left(\ENRSMCCAT\right) \sslash (\V,\uV) \longrightarrow \VSMCCAT \sslash \uV\;.$$
\end{proof}

\begin{RemSub}\label{rem:expl_desc_enrv}
Explicitly, the 2-functor $\Enr_\V$ of \bref{thm:enr_v} is given on 1-cells as follows.  Given a 1-cell $(S,\beta):(\M,G) \rightarrow (\N,H)$ in the lax slice 2-category $\SMCCAT \sslash \V$, i.e.
$$
\xymatrix{
\M \ar[dr]_G^{}="src1" \ar[rr]^S &         & \N \ar@{}"src1";{}|(.2){}="tgt2"|(.5){}="src2" \ar[dl]^H \\
                                                         & \V     &
  \ar@{<=}"src2";"tgt2"^\beta
}
$$
in $\SMCCAT$, the associated 1-cell $(G_*\uM,\grave{G}) \rightarrow (H_*\N,\grave{H})$ in $\VSMCCAT \sslash \uV$ is given by the following diagram in $\VSMCCAT$.
$$
\xymatrix{
G_*\uM \ar@/_1.5ex/[dr]_{\grave{G}}|{}="s1" \ar[r]^{\beta_*\uM} & H_*S_*\uM \ar@{}"s1";{}|(.2){}="s2"|(.6){}="t2" \ar[d]^(.4){\widegrave{HS}}|(.6){}="s3" \ar[r]^{H_*(\grave{S})} & H_*\uN \ar@{}"s3";{}|(.45){}="s4"|(.55){}="t4" \ar@/^1.5ex/[dl]^{\grave{H}} \\
& \uV & 
  \ar@{=>}"s2";"t2"_{\grave{\beta}}
  \ar@{=}"s4";"t4"
}
$$
Given a 2-cell $\alpha:(S,\beta) \Rightarrow (T,\gamma):(\M,G) \rightarrow (\N,H)$ in $\SMCCAT \sslash \V$, the associated 2-cell in $\VSMCCAT \sslash \uV$ is $H_*(\grave{\alpha}) \circ \beta_*\uM$.
\end{RemSub}

\section{Enrichment of a symmetric monoidal closed adjunction}

\begin{LemSub}\label{thm:adj_dets_adj_in_laxslice}
Let $f \nsststile{\varepsilon}{\eta} g:B \rightarrow A$ be an adjunction in a 2-category $\K$.  Then $(f,\eta) \nsststile{\varepsilon}{\eta} (g,1_g):(B,g) \rightarrow (A,1_A)$ is an adjunction in the lax slice 2-category $\K\sslash A$.
\end{LemSub}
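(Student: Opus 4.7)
The plan is to exhibit $\eta$ and $\varepsilon$ themselves as the unit and counit of the adjunction in $\K\sslash A$, after verifying that they satisfy the compatibility condition required of 2-cells in the lax slice. The two triangle identities will then follow automatically from those of $f \dashv g$ in $\K$ via the forgetful 2-functor $\K\sslash A \to \K$.

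First I would unpack the relevant composites using the composition rule in $\K\sslash A$, according to which the 2-cell component of $(t,\tau) \circ (s,\sigma)$ is the pasting $(\tau \circ s) \cdot \sigma$. A short calculation then shows that the composite $(g,1_g) \circ (f,\eta):(A,1_A) \to (A,1_A)$ has underlying 1-cell $gf$ and structure 2-cell $(1_g \circ f) \cdot \eta = \eta$, while the composite $(f,\eta) \circ (g,1_g):(B,g) \to (B,g)$ has underlying 1-cell $fg$ and structure 2-cell $(\eta \circ g) \cdot 1_g = \eta g$. The identities on $(A,1_A)$ and $(B,g)$ in the lax slice are of course $(1_A,1_{1_A})$ and $(1_B,1_g)$, respectively.

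Next I would propose as unit the 2-cell $\eta:(1_A,1_{1_A}) \Rightarrow (gf,\eta)$ and as counit the 2-cell $\varepsilon:(fg,\eta g) \Rightarrow (1_B,1_g)$, and check that each is a legitimate 2-cell of $\K\sslash A$. Recall that a 2-cell $\alpha:(s,\sigma) \Rightarrow (t,\tau):(L,g) \to (M,h)$ must satisfy $h\alpha \cdot \sigma = \tau$. For the proposed unit this amounts to $1_A\eta \cdot 1_{1_A} = \eta$, which is trivial. For the proposed counit it amounts to $g\varepsilon \cdot \eta g = 1_g$, which is precisely one of the triangle identities for $f \dashv g$ in $\K$. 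This is the only step where a nontrivial hypothesis is used, and while it is not hard, it is the pivotal observation of the argument: the triangle identity simultaneously guarantees well-definedness of $\varepsilon$ as a lax-slice 2-cell.

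Finally, the triangle identities for $(f,\eta) \dashv (g,1_g)$ in $\K\sslash A$ reduce immediately to those for $f \dashv g$ in $\K$: the forgetful assignment $\K\sslash A \to \K$ preserves 1-cell composition, whiskering, and vertical composition of 2-cells, and is faithful on 2-cells by the very definition of 2-cells in the lax slice. Hence any equation of pasted 2-cells in $\K\sslash A$ may be tested in $\K$, and both triangle identities for $(f,\eta) \dashv (g,1_g)$ transport directly from those for $f \dashv g$. No serious obstacle is anticipated; the lemma is in essence a bookkeeping exercise built around the single observation that the counit's lax-slice compatibility condition is itself a triangle identity.
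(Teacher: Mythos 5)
Your proof is correct and is exactly the verification the paper has in mind (the paper simply declares it "straightforward" and omits the details): take $\eta$ and $\varepsilon$ themselves as unit and counit, note that the counit's lax-slice 2-cell condition $g\varepsilon\cdot\eta g=1_g$ is a triangle identity, and transport the triangle identities along the locally faithful forgetful 2-functor $\K\sslash A\to\K$.
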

\begin{proof}
The verification is straightforward.
\end{proof}

\begin{ThmSub}\label{thm:enr_smcadj}
Let $F \nsststile{\varepsilon}{\eta} G:\M \rightarrow \V$ be an adjunction in $\SMCCAT$.  Then there is an associated adjunction
$$\acute{F} \nsststile{\varepsilon}{\eta} \grave{G}:G_*\uM \rightarrow \uV$$
in $\VSMCCAT$ whose underlying adjunction in $\SMCCAT$ may be identified with the given adjunction, via an isomorphism $\M \cong (G_*\uM)_0$.
\end{ThmSub}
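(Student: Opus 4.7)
The plan is to obtain the claimed adjunction as the image of an adjunction in the lax slice $\SMCCAT\sslash\V$ under the composite 2-functor
$$\SMCCAT\sslash\V \xrightarrow{\Enr_\V} \VSMCCAT\sslash\uV \longrightarrow \VSMCCAT,$$
where the first factor is the 2-functor of Theorem \bref{thm:enr_v} and the second is the evident forgetful 2-functor. By Lemma \bref{thm:adj_dets_adj_in_laxslice}, the given adjunction $F \dashv G$ in $\SMCCAT$ produces an adjunction $(F,\eta) \dashv (G,1_G):(\V,1_\V) \rightarrow (\M,G)$ in $\SMCCAT\sslash\V$. Since any 2-functor preserves adjunctions, the composite above transports this to an adjunction in $\VSMCCAT$. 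Reading off the images via Remark \bref{rem:expl_desc_enrv}, the right adjoint of the resulting adjunction is $\grave{G}$, the left adjoint is $\acute{F} := G_*(\grave{F}) \circ \eta_*\uV$, and the unit and counit have components that are literally those of $\eta$ and $\varepsilon$, since the $\V$-functors $\alpha_*\uW$ appearing in Remark \bref{rem:expl_desc_enrv} are identity-on-objects.

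To identify the underlying adjunction in $\SMCCAT$ with the given one, I would invoke the Monoidal Fundamental Lemma (Lemma \bref{thm:mon_fund_lemma}), specialised to base $\SET$ and applied to the 1-cell $G:\M \rightarrow \V$ of $\SMCCAT = \eSMCCAT{\SET}$. This yields a commuting diagram in $\SMCCAT$ identifying $U^\V_*(\grave{G}) \circ \theta^G_*\uM$ with $G$ under the canonical isomorphisms $\M \cong U^\M_*\uM$ and $\V \cong U^\V_*\uV$. A standard consequence of doctrinal adjunction is that the left adjoint $F$ of any adjunction in $\SMCCAT$ is strong symmetric monoidal; given this, a direct computation shows that $\theta^G_M:\M(I_\M,M) \rightarrow \V(I_\V,GM)$ factors as the hom adjunction $\V(I_\V,GM) \cong \M(FI_\V,M)$ composed with the isomorphism $\M(I_\M,M) \cong \M(FI_\V,M)$ induced by $(e^F)^{-1}$, and so is itself an isomorphism. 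Thus $\theta^G_*\uM$ is an isomorphism of symmetric monoidal closed categories, furnishing the required isomorphism $\M \cong (G_*\uM)_0$ under which $(\grave{G})_0$ is identified with $G$; by uniqueness of left adjoints, $(\acute{F})_0$ is then identified with $F$.

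I expect the main technical obstacle to be verifying that, after transport through all of these 2-functors, the unit and counit return exactly $\eta$ and $\varepsilon$ on the underlying level (rather than merely 2-cells canonically isomorphic to them), so that the identification with the given adjunction is strict modulo the single isomorphism $\M \cong (G_*\uM)_0$. This reduces to a component-wise chase using the explicit formulas in Remark \bref{rem:expl_desc_enrv} and the fact that both the identifications $\M \cong U^\M_*\uM$ and $\theta^G_*\uM$ are identity-on-objects and built from unit data already appearing in the original adjunction. The crucial conceptual input throughout is the normality of $G$, which is ensured by the strong monoidality of $F$ and without which the isomorphism $\M \cong (G_*\uM)_0$ would fail.
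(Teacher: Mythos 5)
Your proposal follows essentially the same route as the paper: the adjunction in the lax slice supplied by \bref{thm:adj_dets_adj_in_laxslice} is transported along $\Enr_\V$ and the domain 2-functor, the data are read off from \bref{rem:expl_desc_enrv}, and the underlying adjunction is identified with the given one via the normality of $G$ (so that $K^G = \theta^G_*\uM$ of \bref{def:id_on_obs_strsmfun_kg} is an isomorphism) together with \bref{thm:recovering_smcfunc_from_its_enr__monoidal_version}. The only minor differences are that the paper simply cites Kelly's doctrinal-adjunction result for the normality of $G$ rather than re-deriving it from the strong monoidality of $F$, and it disposes of your anticipated ``component-wise chase'' (where ``uniqueness of left adjoints'' alone would not suffice) by noting that an adjunction in $\SMCCAT$ is uniquely determined by its right adjoint together with the unit components $(FV,\eta_V)$, which here coincide literally with those of the transported adjunction.
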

\begin{proof}
By \bref{thm:adj_dets_adj_in_laxslice}, we have an adjunction $(F,\eta) \nsststile{\varepsilon}{\eta} (G,1_G):(\M,G) \rightarrow (\V,1_{\V})$ in $\SMCCAT\sslash\V$.  The composite 2-functor
\begin{equation}\label{eq:2func_used_for_enrmt_adj}\SMCCAT\sslash\V \xrightarrow{\Enr_\V} \VSMCCAT\sslash\uV \xrightarrow{\Dom} \VSMCCAT\end{equation}
(where $\Dom$ is the `domain' 2-functor) sends this adjunction to an adjunction in $\VSMCCAT$ that we now describe explicitly.  

Using the description of $\Enr_\V$ given in \bref{rem:expl_desc_enrv}, we find that whereas the right adjoint is $\grave{G}$ \pbref{def:enr_smc_func}, the left adjoint $\acute{F}$ is the composite
$$\uV \xrightarrow{\eta_*\uV} G_*F_*\uV \xrightarrow{G_*(\grave{F})} G_*\uM\;.$$
The unit is obtained by sending $\eta:1_{(\V,1)} \Rightarrow (G,1)(F,\eta) = (GF,\eta)$ along \eqref{eq:2func_used_for_enrmt_adj} to yield the 2-cell
$$
\xymatrix{
\uV \ar[d]_{\eta_*\uV} \ar[dr]^1="s1" & \\
G_*F_*\uV \ar[r]_{\widegrave{GF}} \ar@{}"s1";{}|(.4){}="s2"|(.6){}="t2" & \uV
  \ar@{=>}"s2";"t2"_{\grave{\eta}}
}
$$
in $\VSMCCAT$ \pbref{thm:lemma_for_self_enr_on_2-cells}, whose components are equally those of $\eta$.  The counit $\acute{\varepsilon}$ is obtained by sending $\varepsilon:(F,\eta)(G,1) = (FG,\eta G) \Rightarrow 1_{(\M,G)}$ along \eqref{eq:2func_used_for_enrmt_adj} to yield the 2-cell
\begin{equation}\label{eqn:eps_acute}
\xymatrix{
&  G_*F_*G_*\uM \ar@{}[l]|(.0){}="s3" \ar[d]|{G_*\varepsilon_*\uM} \ar@/^1.5ex/[dr]^{G_*(\widegrave{FG})}="s1" &\\
G_*\uM \ar@/^1.5ex/[ur]^{\eta_*G_*\uM} \ar[r]_1|(.3){}="t3" & G_*\uM \ar[r]_1 \ar@{}"s1";{}|(.4){}="s2"|(.6){}="t2" & G_*\uM
  \ar@{=>}"s2";"t2"^{G_*(\grave{\varepsilon})}
  \ar@{}"s3";"t3"|(.5){}="s4"|(.6){}="t4"
  \ar@{=}"s4";"t4"
}
\end{equation}
which we denote by $\acute{\varepsilon}$.  We already know that $\grave{\eta}$ and $\acute{\varepsilon}$ have the expected domain and codomain, even though this may not be immediately evident from the diagrams.

By \cite[Proposition 2.1]{Ke:Doctr}, the right adjoint 1-cell $G$ in $\SMCCAT$ is normal, so the comparison 1-cell $K^G:\M \rightarrow (G_*\uM)_0$ in $\SMCCAT$ is an isomorphism \pbref{def:id_on_obs_strsmfun_kg}.  Since $K^G$ commutes with $G$ and $\grave{G}_0:(G_*\uM)_0 \rightarrow \V$ in $\SMCCAT$ \pbref{thm:recovering_smcfunc_from_its_enr__monoidal_version}, the monoidal functors $G$ and $\grave{G}_0$ in $\SMCCAT$ are identified as soon as we identify $\M$ with $(G_*\uM)_0$ along $K^G$.  Once we do so, we have an adjunction 
$\acute{F}_0 \nsststile{\acute{\varepsilon}_0}{\grave{\eta_0}} \grave{G}_0 = G:\M \rightarrow \V$ in $\SMCCAT$ that we may compare with $F \nsststile{\varepsilon}{\eta} G$.  Such an adjunction is uniquely determined by the right adjoint $G$ together with the family $(\acute{F}V,\grave{\eta}_V) = (FV,\eta_V)$ indexed by the objects $V \in \V$; indeed, for adjunctions in $\CAT$ this is well-known, and for adjunctions in $\SMCCAT$ it follows from Kelly's work on doctrinal adjunction (\cite[1.4]{Ke:Doctr}).  Hence the two adjunctions in question are identical.
\end{proof}

\begin{RemSub}\label{rem:enr_smcadj}
In the situation of \bref{thm:enr_smcadj}, the given adjunction is sent by $\eSMCCAT{(-)}:\SMCCAT \rightarrow \TWOCAT$ to a 2-adjunction 
$$\label{eq:2adj_chbase}F_* \nsststile{\varepsilon_*}{\eta_*} G_*:\eSMCCAT{\M} \rightarrow \VSMCCAT\;.$$
Notice that $\acute{F}:\uV \rightarrow G_*\uM$ is the transpose of $\grave{F}:F_*\uV \rightarrow \uM$ under this 2-adjunction, and with reference to \eqref{eqn:eps_acute}, $\acute{\varepsilon}$ is the transpose of the 2-cell $\grave{\varepsilon}$.
\end{RemSub}

\begin{ExaSub}\label{exa:smc_adj_sheaves}
Letting $g:X \rightarrow S$ be a morphism of schemes, and letting ($\M$,$\V$) be any one of the pairs of categories (i), (ii), or (iii) listed in \S \bref{sec:intro}, the associated adjunction $g^* \dashv g_*:\M \rightarrow \V$ is symmetric monoidal closed, as we now verify.

For (ii) and (iii), this is all but made explicit in \cite{Lip}, so we piece together the needed facts here.  In cases (ii) and (iii), $g_*$ is a symmetric monoidal functor by, e.g., \cite[3.4.4]{Lip}.  By \cite[1.1, 1.2]{Ke:Doctr}, the \textit{mate} of the monoidal structure on $g_*$ is an \textit{op-monoidal} structure on $g^*$, consisting of morphisms
$$g^*(\cO_S) \rightarrow \cO_X\;\;\;\;\;\;\;\;\;\;\;\;g^*(V \otimes W) \rightarrow g^*(V)\otimes g^*(W)\;\;\;\;(V,W \in \V).$$
But as noted in \cite[3.5.4, 3.2.4, 3.1.9]{Lip}, the rightmost is an isomorphism.  It is well-known that the leftmost is also an isomorphism; indeed, in case (ii) it is simply the canonical isomorphism $g^*(\cO_S) = \cO_X \otimes_{g^{-1}(\cO_S)} g^{-1}(\cO_S) \xrightarrow{\sim} \cO_X$, and in case (iii) it is obtained from the latter isomorphism via \cite[3.2.5(a)]{Lip} and so is an isomorphism in $\text{D}(\Mod{\cO_X})$ since $\cO_S$ is \textit{q-flat} as a complex\footnote{In more detail, since $\cO_S$ is q-flat, the definition of the left-derived functor $\textsf{\textbf{L}}g^*$ in terms of q-flat resolutions \cite[3.1]{Lip} entails that $\textsf{\textbf{L}}g^*(\cO_S) = g^*(\cO_S)$ where $g^*$ denotes the inverse image of sheaves of modules.  Moreover, since the canonical morphism $\textsf{\textbf{L}}g^*(\cO_S) \rightarrow g^*(\cO_S)$ is simply the identity morphism, the morphism $\textsf{\textbf{L}}g^*(\cO_S) \rightarrow \cO_X$ in question is therefore merely the isomorphism of $\cO_X$-modules $g^*(\cO_S) \rightarrow \cO_X$, considered as a morphism in $\text{D}(\Mod{\cO_X})$.} (by \cite[2.5.2]{Lip}, as $\cO_S$ is a flat $\cO_S$-module).  Hence \cite[1.4]{Ke:Doctr} applies.

Regarding (i), it is well-known that if $g^* \dashv g_*:\M \rightarrow \V$ is an adjunction between categories with finite products, and $g^*$ preserves finite products, then this adjunction is symmetric monoidal; indeed, the mate of the cartesian monoidal structure on $g_*$ consists of isomorphisms, so again \cite[1.4]{Ke:Doctr} applies.
\end{ExaSub}

\bibliographystyle{amsplain}
\bibliography{bib}

\end{document}